\DeclareFontFamily{U}{mathx}{\hyphenchar\font45}
\DeclareFontShape{U}{mathx}{m}{n}{
      <5> <6> <7> <8> <9> <10>
      <10.95> <12> <14.4> <17.28> <20.74> <24.88>
      mathx10
      }{}
\DeclareSymbolFont{mathx}{U}{mathx}{m}{n}
\DeclareMathAccent{\widecheck}{0}{mathx}{"71}
\DeclareMathAccent{\wideparen}{0}{mathx}{"75}
\g@addto@macro{\thm@space@setup}{\thm@headfont{\bfseries}}
\def\th@plain{%
  \thm@notefont{}
  \itshape 
}
\def\th@definition{%
  \thm@notefont{}
  \normalfont 
}
\let\PROOF=\proof
\renewcommand\proof{\PROOF[\bfseries\proofname]}
\theoremstyle{plain}
\newtheorem{lemma}{Lemma}[section]
\newtheorem{corollary}[lemma]{Corollary}
\newtheorem{definition}[lemma]{Definition}
\newtheorem{proposition}[lemma]{Proposition}
\newtheorem{theorem}[lemma]{Theorem}
\numberwithin{equation}{section}
\renewcommand{\O}{{\rm O}}
\newcommand{\rmE}{{\rm E}}
\newcommand{\GL}{{\rm GL}}
\newcommand{\bfE}{{\bf E}}
\newcommand{\bfA}{{\bf A}}
\newcommand{\bfQ}{{\bf Q}}
\newcommand{\bfI}{{\bf I}}
\newcommand{\R}{{\mathbb R}}
\newcommand{\C}{{\mathbb C}}
\newcommand{\Z}{{\mathbb Z}}
\newcommand{\T}{{\mathbb T}}
\newcommand{\E}{{\rm E}}
\newcommand{\rmT}{{\rm T}}
\newcommand{\rmP}{{\rm P}}
\newcommand{\calB}{{\mathcal{B}}}
\newcommand{\calE}{{\mathcal{E}}}
\newcommand{\calL}{{\mathcal{L}}}
\newcommand{\calH}{{\mathcal{H}}}
\newcommand{\calR}{{\mathcal{R}}}
\newcommand{\calZ}{{\mathcal{Z}}}
\renewcommand{\mod}{{\rm mod}}
\newcommand{\res}{{\rm res}}
\newcommand{\supp}{{\rm supp}}
\renewcommand{\d}{\,{\rm d}}
\newcommand{\id}{{\rm id}}
\newcommand{\tr}{{\rm tr}}
\newcommand{\Aut}{{\rm Aut}}
\newcommand{\HS}{{\rm HS}}
\newcommand{\U}{{\rm U}}
\newcommand{\mult}{{\rm mult}}
\newcommand{\beq}{\begin{equation}}
\newcommand{\eeq}{\end{equation}}
\newcommand{\beqs}{\begin{eqnarray}}
\newcommand{\eeqs}{\end{eqnarray}}
\begin{document}
\numberwithin{equation}{section}
\counterwithout*{equation}{subsection}
\counterwithin{figure}{section}
\counterwithout*{figure}{subsection}

\title[The Zak transform on $G$-spaces]{The Zak transform on strongly proper $G$-spaces and its applications}
\author[D. J\"ustel]{Dominik J\"ustel}
\address{Technical University of Munich,\hspace*{\fill}\linebreak \indent Faculty of Mathematics,\hspace*{\fill}\linebreak \indent Boltzmannstr. 3,\hspace*{\fill}\linebreak \indent 85747 Garching b. M\"unchen}%
\email{juestel@ma.tum.de}
\thanks{DJ was partially supported by the TUM Graduate School. Part of the paper was developped during the author's stay at the HIM Trimester Program ``Mathematics of Signal Processing" in Bonn.}
\subjclass{Primary 43A32; Secondary 58D19, 28C15}

\begin{abstract}
The Zak transform on $\R^d$ is an important tool in condensed matter physics, signal processing, time-frequency analysis, and harmonic analysis in general. This article introduces a generalization of the Zak transform to a class of locally compact $G$-spaces, where $G$ is either a locally compact abelian or a second countable unimodular type I group. This framework unifies previously proposed generalizations of the Zak transform. It is shown that the Zak transform has invariance properties analog to the classic case and is a Hilbert space isomorphism between the space of $L^2$-functions and a direct integral of Hilbert spaces that is explicitly determined via a Weil formula for $G$-spaces and a Poisson summation formula for compact subgroups. Some applications in physics are outlined.
\end{abstract}
\vspace{3mm}

\maketitle

\noindent{\bf Key words.} Zak transform, $G$-spaces, Weil formula, Fourier analysis, Bloch waves, Poisson summation

\setcounter{tocdepth}{2}
\tableofcontents

\section{Introduction}
The Zak transform on $\R$ has already been known to Gelfand \cite{Gelfand1950} and Weil \cite{Weil1964}, and is known under many different names, e.g. the Weil-Brezin map in abstract harmonic analysis \cite{Brezin1970}, or the $kq$-representation \cite{Zak1967} or Bloch(-Floquet) transform  in physics.

It first generated considerable interest, when Zak \cite{Zak1967} rediscovered it and applied it in condensed matter physics as a refinement of the decomposition of electron states into so-called Bloch waves \cite{Bloch1929}. It also found applications in signal analysis (see e.g. \cite{Janssen1988}), in particular as a tool in Gabor analysis, where it is for example used to prove the Balian-Low theorem (see \cite{Groechenig2001} for this approach, and \cite{Balian1981,Low1985} for the original work).

The Zak transform has been generalized to locally compact abelian (lca) groups by Kaniuth and Kutyniok \cite{Kaniuth1998} and, by an ingenious construction, to certain non-abelian locally compact Hausdorff (lcH) groups in \cite{Kutyniok2002}, where clearly a generalization of Gabor analysis was the motivation. More recently, a Zak transform on certain semidirect product groups \cite{Arefijamaal2013} and for actions of lca groups \cite{Barbieri2015,Saliani2014} were considered.

Here, we propose a generalization of the Zak transform in the spirit of the work by Zak \cite{Zak1967}, but without requiring the `symmetry' group to be a subset of the space on which it acts. Instead, the group acts on a topological space in a suitable way. This approach allows, for example, to study rotational and helical symmetries in three-dimensional space in addition to the classic case of translational symmetries, where the group $\R^3$ acts on itself by translations. Along the orbits of the action, the Zak transform decomposes a function via Fourier analysis on non-abelian groups. In this sense, the Zak transform might be called a orbit-frequency decomposition.

The construction includes most previous approaches, but does for example not agree with Kutyniok's construction in the case of a subgroup of a non-abelian group acting by translation, illustrating the different needs for different applications. Our construction will naturally lead to a decomposition of function spaces and operators (e.g. eigenspaces of the electronic Schr\"odinger equation and the Hamiltonian) related to a class of non-crystalline molecular structures, called objective structures, that were studied by James in \cite{James2006} (see Section \ref{subsec:Bloch_Floquet}). The Zak transform originally emerged in the author's work on the design of electromagnetic radiation for the analysis of molecular structures \cite{Friesecke2016,Juestel2016}. This application is outlined in Section \ref{subsec:radiation_design}

The topological spaces considered in this article are strongly proper $G$-spaces. These are lcH spaces $X$ on which an lcH group $G$ acts continuously and properly. In addition, the orbit space of a strongly proper action is paracompact -- a property that is essential to relate integration on $X$ to the group action. This framework is introduced in Section \ref{subsec:proper_G_spaces}.

When there is a measure on $X$ that is quasi-invariant w.r.t. the strongly proper action of $G$ (see Definition \ref{def:invariant_measures}), then integration on $X$ can be decomposed into integration over the group and over the orbit space. This generalization of the classic Weil formula (Theorem \ref{thm:Weil_formula}) for the integration on homogeneous spaces is proved in Section \ref{subsec:Weil_formula}, and nicely complements results by Bourbaki \cite{Bourbaki2004}. It is also closely related to integration over fundamental domains of the action (see Section \ref{subsec:fundamental_domain}).

The Zak transform on $X$ w.r.t. a strongly proper group action is then defined for the action of a locally compact abelian (lca) group (Section \ref{sec:Zak_transform_abelian}) and for the action of a second countable unimodular type I group (Section \ref{sec:Zak_transform_nonabelian}). The fact that the introduced transforms are isometries essentially reduces to an application of the Weil formula and the Plancherel theorem. In the abelian case, the image of the Zak transform can explicitly be determined by combining the Poisson summation formula with the Plancherel theorem (Theorem \ref{thm:Zak_abelian}). In the case of non-abelian groups, a generalized Poisson summation formula is formulated that allows to show the analog result (Theorem \ref{thm:Zak_nonabelian}).

These results explicitly perform the decomposition of the action on the space of $L^2$-functions into irreducible representations. For abelian actions, functions can be decomposed into invariant functions that are suitably modulated along orbits of the action. For non-abelian actions, the Zak transform can be viewed as a family of tensor fields on a fundamental domain of the action. Here the underlying vector space of the tensor field is not the tangential space at the respective point, but the representation space of an element of the dual $\widehat{G}$. A function can then be decomposed into invariant tensor fields that are modulated along the orbits by irreducible representations of the group. We call these equivariant fields {\it Bloch tensor fields}. Moreover, invariant differential operators decompose accordingly. In other words, the Zak transform embodies the representation theory of the action and allows to explicitly determine its decomposition.

In the spirit of Zak's approach \cite{Zak1967}, the Zak transform is written as a decomposition of a function into equivariant measures supported on orbits. This viewpoint reveals the details of the decomposition into Bloch tensor fields. In Zak's words, the Bloch fields are ``expressible in infinitely localized Wannier functions" \cite{Zak1967}.

Finally, in Section \ref{sec:Zak_applications} some applications are discussed, namely the Bloch decomposition of the electronic eigenspaces of objective structures, and the appearance of the Zak transform in radiation design (see \cite{Friesecke2016,Juestel2016}).

Readers that are not interested in the details of the Weil formula can essentially skip Section \ref{sec:Weil_formula} and directly proceed to Section \ref{sec:Zak_transform_abelian} after having a look at Definition \ref{def:proper_action}, where a strongly proper action is defined, at Theorem \ref{thm:Weil_formula}, where the Weil formula for strongly proper group actions is found, and at eqs. \eqref{measure_fundamental_domain} and \eqref{integration_fundamental_domain}, where the measure on the fundamental domain and its relation to the Weil formula are explained.

\section{The Weil formula for strongly proper $G$-spaces} \label{sec:Weil_formula}

\subsection{Strongly proper $G$-spaces} \label{subsec:proper_G_spaces}
The goal of this section is to introduce the setting of this paper, namely the framework of strongly proper $G$-spaces. These are topological spaces on which a topological group $G$ acts in a particularly nice way. The definition of properness goes back to Palais \cite{Palais1960} and was motivated by the attempt to generalize results for compact group actions to a more general setting. For our purpose, the importance of the stronger condition of strong properness of an action lies in the fact that the orbit spaces are paracompact.

Different authors proposed different inequivalent notions of properness that are tailored to specific needs. In \cite{Biller2004}, Biller found a way to reformulate and generalize these definitions in the framework of so-called Cartan actions (a notion that also goes back to Palais) via properties of the orbit space of the action.

Let $G$ be a topological group acting continuously on a topological space $X$ via the action $\rho:G\times X\to X$. We write $\rho_g(x) := \rho(g,x)$ for $g \in G$ and $x \in X$, and $\rho_G(x) := \{\rho_g(x) \,|\, g \in G\}$ for $x \in X$. Moreover, let $\rho\backslash X := \{\rho_G(x) \,|\, x \in X\}$ be the orbit space, equipped with the quotient topology, i.e. the final topology w.r.t. the quotient map $\pi_G:X\to\rho\backslash X$, $x\mapsto \rho_G(x)$. Furthermore, the set $G_{A,B} := \{g \in G \,|\, \rho_g(A)\cap B\not= \emptyset\}$ is called the transporter of the two subsets $A,B\subset X$, and $G_x := G_{\{x\},\{x\}}$ denotes the stabilizer of $x \in X$.

Recall that a topological space is called Hausdorff, if points can be separated by neighborhoods, called regular, if points and closed sets can be separated by neighborhoods, and called paracompact, if every open cover has an open locally finite refinement.

\begin{definition}[Cartan action, (Strongly/Palais-)proper action] \label{def:proper_action}
Let $G$ be a topological group acting continuously on a topological space $X$ via the action $\rho:G\times X\to X$.
\begin{enumerate}
\item[(i)] $\rho$ is called Cartan action, if the stabilizers $G_x$ are compact for all $x \in X$, and for every $x \in X$ and every neighborhood $U$ of $G_x$ there is a neighborhood $V$ of $x$, s.t. $G_{V,V}\subseteq U$.
\item[(ii)] $\rho$ is called proper action, if it is a Cartan action and $\rho\backslash X$ is Hausdorff.
\item[(iii)] $\rho$ is called Palais-proper action, if it is a Cartan action and $\rho\backslash X$ is regular.
\item[(iv)] $\rho$ is called strongly proper action, if it is a Cartan action and $\rho\backslash X$ is paracompact.
\end{enumerate}
\end{definition}

Condition (i) is a continuity condition on the map $x\mapsto G_x$. As seen in \cite{Biller2004}, definitions (i) and (iii) generalize the respective definitions of Palais \cite{Palais1960}, (ii) is equivalent to Bourbaki's definition \cite[III, \S 4.1]{Bourbaki1989}, and (iv) generalizes the definition of Baum, Connes, and Higson \cite{Baum1994}.

When dealing with integration on proper $G$-spaces, one considers locally compact Hausdorff (lcH) groups acting on lcH spaces. In this setting, the orbit space is again lcH (see\cite[VII,\S 2]{Bourbaki2004}), so in particular Hausdorff and regular, s.t. (i)-(iii) in Definition \ref{def:proper_action} are equivalent. Moreover, in \cite[III, \S 4.4]{Bourbaki1989} it is shown that in this case, properness of the action is equivalent to compactness of all transporters of compact sets.

Strong properness, i.e. paracompactness of the orbit space, is important to relate integration on $X$ to integration on $\rho\backslash X$, as will be seen in Section \ref{sec:Weil_formula}. In \cite{Chabert2001}, Chabert, Echterhoff, and Meyer showed that all four definitions (i)-(iv) in Definition \ref{def:proper_action} are equivalent when $G$ and $X$ are lcH and second countable. More generally, Biller \cite{Biller2004} showed that the action of a locally compact Lindel\"of group on a paracompact lcH space is strongly proper, if and only if it is proper\footnote{In fact, Abels \cite{Abels1974} conjectured that every Palais-proper action of a connected lcH group on a paracompact space is strongly proper.} (A topological space is called Lindel\"of, if every open cover has a countable subcover.).

We give an illustrating example for the different notions of properness. Consider the action $\rho$ of $\Z$ on $\R^2$ that is given by
$$\rho_n(x) = (2^n x_1,2^{-n} x_2),\ \ \ n \in \Z,\ x = (x_1,x_2) \in \R^2.$$
Note, that interpreting $\R^2$ as the phase space $\R\times\widehat{\R}$, this action is a symplectic transformation w.r.t. the standard symplectic form $\omega((x_1,y_1),(x_2,y_2)) = x_1y_2 - x_2y_1$.

The action $\rho$ is not a Cartan action, because the stabilizer of the origin $(0,0)$ is the whole group, which is not compact. Restricting the action to $\R^2\setminus\{0\}$ yields a Cartan action, because the stablizers are then all trivial, and the continuity condition follows from continuity of the action. However, this action is not proper, because the quotient space is not Hausdorff. This is easiest seen from the fact that the graph of the action is not closed (see \cite[Prop. 8, I, 8.3]{Bourbaki1989}). Alternatively, consider two sets $A$ and $B$ that are neighborhoods of points on the $x$-axis and the $y$-axis, respectively. Then on can show that the transporter $\Z_{A,B}$ is not compact. Now, when further restricting the action to the set $\R^2\setminus\{(x,y) \,|\, x = 0\ \mbox{or}\ y = 0\}$, then the action is proper, as the quotient space is the topological sum of four half-lines. So, this action is also Palais-proper and strongly proper, what can also be seen with the result of Chabert et al. \cite{Chabert2001}, since $\Z$ and $\R^2$ are second countable. This example shows that properness is a quite strong condition, as it already fails for this simple discrete abelian automorphic action. However, it is needed for the integration theory on $G$-spaces, as seen in the following section.

\subsection{The Weil formula}
\label{subsec:Weil_formula}
A Weil formula is a tool to relate integration on a $G$-space $X$ to integration on the group $G$ that acts on $X$. This is achieved by constructing a suitable measure on the space of orbits.

In the case of $X=G$ being a lcH group with (left) Haar measure $\mu_G$, and a closed subgroup $H$ with Haar measure $\mu_H$ acting on $G$ via right translation $R:H\times G\to G$, $R_h(g) = gh$, $h \in H$, $g \in G$, a classic result is, that there always is a (left $G$-quasi-invariant) Radon measure $\mu_{G/H}$ on the quotient space $G/H$, s.t. the Weil formula
\begin{equation} \label{Weil_formula_G/H}
\int_G f\cdot q\d\mu_G = \int_{G/H}A_Rf\d\mu_{G/H},\ \ \ f \in C_c(G),
\end{equation}
holds, where the orbital mean operator $A_R:C_c(G)\to C_c(G/H)$ is given by
\begin{equation} \label{orbital_mean_operator}
(A_Rf)(g) := \int_Hf(gh)\d\mu_H(h),\ \ \ f \in C_c(G),\ g \in G,\ h \in H
\end{equation}
(see \cite{Bourbaki2004,Reiter2000,Folland1995}). Here, $q$ is a continuous and strictly positive function that satisfies a certain functional equation (and is sometimes called a $\rho$-function, see \cite{Folland1995}). We will prove a generalization of this Weil formula to strongly proper $G$-spaces under some additional assumptions. It nicely complements results on the Weil formula due to Bourbaki (see \cite[VII,2]{Bourbaki2004}). Even though the result might be known to experts, I didn't find it in the literature. The proofs are more or less standard and can be found in the appendix.

A continuous action $\rho$ of a lcH group $G$ on a lcH space $X$ defines a linear action of $G$ on the vector space of Borel-measurable functions $f$ on $X$ via
\begin{equation} \label{action_on_functions}
(\rho_g f)(x) := f(\rho_g^{-1}(x)),\ \ \ g \in G,\ x \in X.
\end{equation}
As the action is continuous, equation \eqref{action_on_functions} also defines a linear action on the space $C_c(X)$ of compactly supported continuous functions.

We start with the following definitions.

\begin{definition}[Invariant measures] \label{def:invariant_measures}
Let $X$ be a lcH $G$-space on which $G$ acts continuously via $\rho:G\times X\to X$. A Borel measure $\mu$ on $X$ is called
\begin{itemize}
\item[(i)] $\rho$-quasi-invariant, if there are continuous functions $\lambda_g:X\to \C$, $g \in G$, s.t.
$$\int_X \rho_gf\d\mu = \int_X f\cdot\lambda_g\d\mu\ \ \ \mbox{for all}\ f \in C_c(X),$$
\item[(ii)] relatively $\rho$-invariant, if it is $\rho$-quasi-invariant, and the functions $\lambda_g$ are constant,
\item[(iii)] $\rho$-invariant, if it is relatively $\rho$-invariant, and $\lambda_g \equiv 1$ for all $g \in G$.
\end{itemize}
\end{definition}
As examples for these properties, consider a Haar measure $\mu_G$ on a locally compact group $G$. Then $\mu_G$ is invariant w.r.t. the action of $G$ by left translation, relatively invariant w.r.t. right translation and quasi-invariant w.r.t. the action of $\Z_2$ by inversion:
\begin{align*}
\d\mu_G(hg) = \d\mu_G(g),\ \ \ \d\mu_G(gh) = \Delta_G(h)\d\mu_G(g),\ \ \ \d\mu_G(g^{-1}) = \Delta_G(g)^{-1}\d\mu_G(g),
\end{align*}
for $g,h \in G$, where $\Delta_G$ denotes the modular function of $G$, that is defined by the second equality. A Haar measure is also relatively invariant with respect to automorphisms. Let $\varphi \in \Aut(G)$, then there is a positive number $\mod(\varphi)$, called the modulus of $\varphi$, s.t. $d\mu_G(\varphi(g)) = \mod(\varphi)\d\mu_G(g)$ for $g \in G$.

Note that in \cite{Folland1995}, the notion of quasi-invariance used here is called strong quasi-invariance, while quasi-invariance means mutual absolute continuity of the measures $B\mapsto \mu_X(\rho_g(B))$, $g \in G$. The approach of this article avoids additional technical issues.

A desirable property of the orbital mean operator \eqref{orbital_mean_operator} is to map into the space of continuous and compactly supported functions on the space of orbits, which allows to invoke the Riesz representation theorem for the construction of a suitable measure for a Weil formula. A sufficient condition for this property is properness of the action.

\begin{lemma}[Orbital mean operator] \label{lem:orbital_mean_operator}
Let $\rho:G\times X\to X$ be a proper continuous action of an lcH group $G$ on an lcH space $X$. The orbital mean operator $A_\rho$, given by
$$A_\rho f(\pi_\rho(x)) := \int_G\rho_gf(x)\d\mu_G(g),\ \ \ f \in C_c(X),\ x \in X.$$
is well-defined and maps a function $f \in C_c(X)$ to a function $A_\rho f \in C_c(\rho\backslash X)$.
\end{lemma}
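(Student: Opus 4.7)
The plan is to verify four properties in turn: convergence of the defining integral for each $x \in X$, invariance of the value under the $G$-action (so that $A_\rho f$ descends to $\rho\backslash X$), continuity on the quotient, and compactness of support. Setting $K := \supp(f)$, convergence is immediate because the integrand $g \mapsto f(\rho_g^{-1}(x))$ is supported on $\{g \in G : x \in \rho_g(K)\} = G_{K,\{x\}}$, which is compact by properness of $\rho$ (in the lcH setting, equivalent to compactness of transporters of compact sets, as recalled after Definition \ref{def:proper_action}). Orbit-invariance follows from the substitution $g' = h^{-1}g$ in $\int_G f(\rho_g^{-1}(\rho_h x))\,d\mu_G(g)$, combined with the identity $\rho_{hg'}^{-1}(\rho_h x) = \rho_{g'}^{-1}(x)$ and left-invariance of the Haar measure $\mu_G$.

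The technical core is continuity. Fix $x_0 \in X$ and a relatively compact open neighborhood $V$ of $x_0$. By properness, $C := G_{K,\overline{V}}$ is compact, and for every $x \in V$ the integrand $g\mapsto f(\rho_g^{-1}(x))$ vanishes off $C$. Since $(g,x)\mapsto f(\rho_g^{-1}(x))$ is jointly continuous on $G\times X$, uniform continuity on the compact product $C\times\overline{V}$---equivalently, dominated convergence against the bounded, compactly supported majorant $\|f\|_\infty\,\mathbf{1}_C$---shows that $x \mapsto \int_G f(\rho_g^{-1}(x))\,d\mu_G(g)$ is continuous at $x_0$. Because $\pi_\rho$ carries the quotient topology, this $G$-invariant continuous function on $X$ descends to a continuous function on $\rho\backslash X$.

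Finally, $A_\rho f$ vanishes outside $\pi_\rho(K)$: if no representative of $\pi_\rho(x)$ lies in $K$, then the integrand is identically zero. Thus $\supp(A_\rho f)\subseteq \pi_\rho(K)$, which is compact as the continuous image of $K$ under $\pi_\rho$. The only genuinely delicate step is continuity; the decisive input is properness, which supplies the uniform-in-$x$ compact set $C$ containing the effective integration domain and thereby enables the dominated-convergence argument.
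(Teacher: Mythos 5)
Your proposal is correct and follows essentially the same route as the paper's proof: compactness of the transporter $G_{\supp(f),\{x\}}$ for convergence, left-invariance of $\mu_G$ for descent to the orbit space, and properness supplying a single compact set $G_{\supp(f),\overline{V}}$ (the paper uses a compact $K_0$ eventually containing a convergent net) on which uniform continuity lets one pass the limit through the integral. The only cosmetic caveat is that the ``equivalently, dominated convergence'' aside is delicate for nets in a non--first-countable $X$, but your primary uniform-continuity argument is exactly the paper's and is sound.
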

\begin{proof}
The proof is standard and can be found in the appendix.
\end{proof}

We are now in place to state the main theorem of this section. It complements results of Bourbaki for proper actions and generalizes classic results on quotients $G/H$.

\begin{theorem}[Weil formula for strongly proper $G$-spaces] \label{thm:Weil_formula}
Let $\rho:G\times X\to X$ be a strongly proper action of an lcH group $G$ on an lcH space $X$. When there is a $\rho$-quasi-invariant Radon measure $\mu_X$ on $X$ with functions $\lambda_g$, s.t. $\d\mu_X(\rho_g(x)) = \lambda_g\d\mu_X(x)$ for $g \in G$, $x \in X$, then there is a strictly positive and continuous function $q$ on $X$ that satisfies
\begin{equation} \label{functional_eq_q}
\rho_gq(x) = \frac{\Delta_G(g)}{\lambda_{g^{-1}}(x)}q(x),\ \ \ g \in G,\ x \in X,
\end{equation}
and a unique Radon measure $\mu_{\rho\backslash X}$ on $\rho\backslash X$, s.t.
\begin{equation} \label{Weil_formula}
\int_X f\cdot q\d\mu_X = \int_{\rho\backslash X}A_\rho f\d\mu_{\rho\backslash X},\ \ \ f \in C_c(X).
\end{equation}
In particular, when $\mu_X$ is relatively $\rho$-invariant with $\lambda_g\equiv \Delta_G(g^{-1})$, then \eqref{Weil_formula} is satisfied with $q\equiv 1$.
\end{theorem}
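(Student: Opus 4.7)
The plan splits naturally into two parts: first, construct a continuous, strictly positive function $q$ on $X$ satisfying \eqref{functional_eq_q}; second, build the measure $\mu_{\rho\backslash X}$ from $q$ by a Riesz representation argument and verify \eqref{Weil_formula} by Fubini and an orbit substitution.

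For the construction of $q$, the main tool is the local averaging construction: for any $\phi\in C_c(X)$ with $\phi\geq 0$, set
\[q_\phi(x):=\int_G \phi(\rho_g(x))\,\lambda_g(x)\d\mu_G(g).\]
Properness forces the integrand to have compact support in $g$ (the transporter $G_{\{x\},\supp\phi}$), so $q_\phi\in C(X)$. A change of variable $g\mapsto g'h$ on $G$, using the cocycle identity $\lambda_{gh}(y)=\lambda_g(\rho_h(y))\lambda_h(y)$ (which follows by iterating the defining identity of quasi-invariance) together with the right-translation behavior $\int F(g'h)\d\mu_G(g')=\Delta_G(h)^{-1}\int F(g')\d\mu_G(g')$ of the left Haar measure, yields exactly $\rho_h q_\phi = (\Delta_G(h)/\lambda_{h^{-1}})\,q_\phi$. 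Global positivity is arranged using strong properness: since $X$ is lcH and $\pi_\rho$ is open, every point of $\rho\backslash X$ has an open neighborhood of the form $\pi_\rho(W)$ with $W\subset X$ open and relatively compact; paracompactness of $\rho\backslash X$ yields a locally finite refinement $\{U_i\}$ with $U_i\subseteq\pi_\rho(\mathrm{int}\,K_i)$ for compact $K_i\subset X$, a subordinate partition of unity $\{\psi_i\}$ on $\rho\backslash X$, and I pick $\phi_i\in C_c(X)$ with $\phi_i\geq 0$ and $\phi_i>0$ on $\mathrm{int}\,K_i$. Setting
\[q(x):=\sum_i \psi_i(\pi_\rho(x))\,q_{\phi_i}(x),\]
the sum is locally finite, so $q\in C(X)$; for every $x$ some index $i$ has both $\psi_i(\pi_\rho(x))>0$ and $q_{\phi_i}(x)>0$, so $q>0$; and since the weights $\psi_i\circ\pi_\rho$ are $\rho$-invariant, they drop out of the transformation rule and $q$ inherits \eqref{functional_eq_q} from each $q_{\phi_i}$.

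For the measure, I define the positive linear functional
\[\ell(F):=\int_X F(\pi_\rho(y))\sum_i \psi_i(\pi_\rho(y))\,\phi_i(y)\d\mu_X(y),\qquad F\in C_c(\rho\backslash X),\]
which is finite because only finitely many $i$ meet $\supp F$. By the Riesz representation theorem, $\ell$ corresponds to a unique Radon measure $\mu_{\rho\backslash X}$. To verify \eqref{Weil_formula}, expand $\int_X fq\d\mu_X$, exchange the $G$- and $X$-integrals by Fubini (legitimate thanks to the compact supports), and substitute $y=\rho_g(x)$ (which converts $\lambda_g(x)\d\mu_X(x)$ into $\d\mu_X(y)$ and leaves $\pi_\rho$ invariant): the inner $G$-integral collapses to $A_\rho f(\pi_\rho(y))$ by Lemma \ref{lem:orbital_mean_operator}, and summing over $i$ gives $\int_X fq\d\mu_X=\ell(A_\rho f)=\int_{\rho\backslash X}A_\rho f\d\mu_{\rho\backslash X}$. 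Uniqueness of $\mu_{\rho\backslash X}$ reduces to surjectivity of $A_\rho:C_c(X)\to C_c(\rho\backslash X)$: for $F$ with compact support $K\subset\rho\backslash X$, paracompactness again produces some nonnegative $\phi\in C_c(X)$ with $A_\rho\phi>0$ on a neighborhood of $K$, and then $f:=\phi\cdot (F\circ\pi_\rho)/(A_\rho\phi\circ\pi_\rho)$ (extended by zero) lies in $C_c(X)$ with $A_\rho f=F$. Finally, in the relatively invariant case $\lambda_g\equiv\Delta_G(g^{-1})$ the factor $\Delta_G(g)/\lambda_{g^{-1}}$ is identically $1$, so $q\equiv 1$ satisfies \eqref{functional_eq_q} and \eqref{Weil_formula} takes the stated simpler form.

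The main obstacle is the construction of $q$, specifically the simultaneous achievement of continuity, global strict positivity, and the transformation rule from a patchwork of local averages. This is precisely where strong properness earns its keep: paracompactness of $\rho\backslash X$ delivers the partition of unity, and lcH-ness of $X$ combined with openness of $\pi_\rho$ allows one to choose covers $\{U_i\}$ each dominated by the image of a compact subset of $X$, which is what guarantees that for each $x$ some $q_{\phi_i}(x)$ is strictly positive. The Fubini exchange and the surjectivity argument for $A_\rho$ are comparatively routine.
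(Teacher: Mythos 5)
Your argument is correct, and it proves the theorem by the same underlying mechanism as the paper --- averaging a bump function over orbits against the cocycle weight, then Riesz representation --- but with a genuinely different organization at both stages. For the function $q$: after the substitution $g\mapsto g^{-1}$ your local average $q_\phi(x)=\int_G\phi(\rho_g(x))\lambda_g(x)\d\mu_G(g)$ is exactly the paper's integral $\int_G\rho_g\phi(x)\lambda_{g^{-1}}(x)\Delta_G(g)^{-1}\d\mu_G(g)$ with $\phi$ in place of the Bruhat function $\beta$, and since the invariant weights $\psi_i\circ\pi_\rho$ may be pulled inside the integral, your patched $q$ equals $q_{\beta'}$ for $\beta'=\sum_i(\psi_i\circ\pi_\rho)\phi_i$; you are therefore constructing an unnormalized Bruhat function inline ($\beta'$ satisfies condition (ii) of Definition \ref{def:Bruhat_function} and $A_\rho\beta'>0$, but not $A_\rho\beta'\equiv1$), where the paper invokes Lemma \ref{lem:Bruhat_function} as a black box. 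For the measure: the paper defines $I_{\rho\backslash X}(F)=\int_X\widetilde F\cdot q\d\mu_X$ via an arbitrary $A_\rho$-preimage $\widetilde F$ and must then prove independence of the choice; you avoid that well-definedness check by writing the functional explicitly from the same data $(\psi_i,\phi_i)$ and verifying $\int_X f\cdot q\d\mu_X=\ell(A_\rho f)$ directly by Fubini, the normalization the paper hides in $A_\rho\beta\equiv1$ being carried implicitly by the matching of $q$ and $\ell$. What each buys: yours is self-contained and computationally direct; the paper's makes the quotient measure manifestly independent of all choices and keeps the Bruhat function available as a reusable tool (it reappears in eq. \eqref{orbital_mean_of_invariant} and in the surjectivity of $A_\rho$). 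Two small points to tighten: continuity of $q_\phi$ requires the locally uniform support argument from the appendix proof of Lemma \ref{lem:orbital_mean_operator} and, strictly, some joint regularity of $(g,x)\mapsto\lambda_g(x)$, on which the paper is equally silent; and in the relatively invariant case your construction still produces a non-constant (now $\rho$-invariant) $q$, so to conclude \eqref{Weil_formula} with $q\equiv1$ you need the one-line observation that $A_\rho(fq^{-1})=\bar q^{-1}A_\rho f$, where $\bar q$ is the descent of the invariant function $q$ to $\rho\backslash X$, so that the Radon measure $\bar q^{-1}\d\mu_{\rho\backslash X}$ does the job.
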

\begin{proof}
The proof can be found in the appendix.
\end{proof}
Note that the measure $q\d\mu_X$ is realtively $\rho$-invariant with $\lambda_g\equiv\Delta_G(g)$, since by the functional equation \eqref{functional_eq_q} for $q$,
$$\rho_{g}q(x)\d\mu(\rho_{g}(x)) = \Delta_G(g)q(x)\d\mu(x),\ \ \ g \in G,\ x \in X.$$
This shows that the measure $q\d\mu_X$ is the unique measure $\mu_{\rho\backslash X}^\#$ in \cite[Prop. 4, VII, 2.2]{Bourbaki2004}, that satisfies the Weil formula.

For future reference, we introduce a name for the setting in Theorem \ref{thm:Weil_formula}.

\begin{definition}[Weil $G$-space] \label{def:Weil_G_space}
Let $G$ be an lcH group that acts strongly proper on an lcH space $X$ via the action $\rho:G\times X\to X$, and $\mu_X$ a $\rho$-quasi-invariant measure on $X$.

The measure space $(X,\mu_{\rho\backslash X}^\#)$ is called a Weil $G$-space, if there is a measure $\mu_{\rho\backslash X}$ on $\rho\backslash X$ and a function $q$ on $X$ that satisfies \eqref{functional_eq_q}, s.t. the Weil formula \eqref{Weil_formula} holds, and $d\mu_{\rho\backslash X}^\#(x) = q(x)\d\mu_X(x)$, $x \in X$.
\end{definition}

A technical tool that is needed to prove Theorem \ref{thm:Weil_formula}, is a so-called Bruhat function.

\begin{definition}[Bruhat function] \label{def:Bruhat_function}
Let $\rho:G\times X\to X$ be a proper continuous action of a lcH group $G$ on a lcH space $X$.

A continuous function $\beta$ on $X$ is called a Bruhat function for $\rho$, if
\begin{enumerate}
\item[(i)] $A_\rho\beta \equiv 1$,
\item[(ii)] for every compact set $C \subseteq X$, the restriction of $\beta$ to $\rho_G(C)$ is non-negative and compactly supported.
\end{enumerate}
\end{definition}

Thus, a Bruhat function is a preimage of the constant function $1$ under the orbital mean operator that has nice compactness properties. It will be used to define the function $q$ in \eqref{Weil_formula}. The existence of a Bruhat function is guaranteed by the following Lemma that is in this generality due to Bourbaki and goes back to Bruhat (see \cite{Bruhat1956}).

\begin{lemma}[Existence of a Bruhat function] \label{lem:Bruhat_function}
Let $\rho:G\times X\to X$ be a strongly proper action of an lcH group $G$ on an lcH space $X$. Then there is a Bruhat function for $\rho$.
\end{lemma}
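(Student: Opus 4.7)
The plan is to assemble $\beta$ by patching together local averaging profiles indexed by a partition of unity on $\rho\backslash X$, exploiting paracompactness of the orbit space (i.e.~strong properness) in the gluing. The guiding idea: once every orbit is covered by a compactly supported non-negative function with strictly positive orbital mean, one can normalize it by that mean and weight it by a subordinate partition of unity, so that averaging reproduces the partition weights themselves.

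First, for each $x \in X$ I would choose a relatively compact open neighborhood $V_x \ni x$ and a function $\phi_x \in C_c(X)$ with $\phi_x \geq 0$ and $\phi_x > 0$ on $V_x$ (by Urysohn on the lcH space $X$). Lemma~\ref{lem:orbital_mean_operator} gives $A_\rho\phi_x \in C_c(\rho\backslash X)$, strictly positive on the open set $W_x := \pi_\rho(V_x)$. The family $\{W_x\}_{x \in X}$ is then an open cover of $\rho\backslash X$, and since $\rho\backslash X$ is paracompact I pass to a locally finite open refinement $\{U_\alpha\}_{\alpha}$ together with a continuous subordinate partition of unity $\{\psi_\alpha\}$ satisfying $\supp\psi_\alpha \subseteq U_\alpha$; for each $\alpha$ I pick $x_\alpha$ with $U_\alpha \subseteq W_{x_\alpha}$, so that $A_\rho\phi_{x_\alpha} > 0$ on $\supp\psi_\alpha$.

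Next, I define
\[
\beta_\alpha(x) := \frac{\psi_\alpha(\pi_\rho(x))\,\phi_{x_\alpha}(x)}{A_\rho\phi_{x_\alpha}(\pi_\rho(x))} \quad \text{for } \pi_\rho(x) \in U_\alpha,
\]
extended by $0$ elsewhere, and set $\beta := \sum_\alpha \beta_\alpha$. The factors $\psi_\alpha\circ\pi_\rho$ and $(A_\rho\phi_{x_\alpha})\circ\pi_\rho$ are constant on $G$-orbits, so they pull out of the $\mu_G$-integral defining $A_\rho$, yielding $A_\rho\beta_\alpha = \psi_\alpha$, and hence $A_\rho\beta = \sum_\alpha \psi_\alpha \equiv 1$, which is condition (i). Since $\supp\beta_\alpha \subseteq \supp\phi_{x_\alpha}$, each $\beta_\alpha$ lies in $C_c(X)$; and for any compact $C \subseteq X$, compactness of $\pi_\rho(C) \subseteq \rho\backslash X$ together with local finiteness forces $\beta_\alpha|_{\rho_G(C)} \equiv 0$ for all but finitely many $\alpha$, so $\beta|_{\rho_G(C)}$ is a finite sum of compactly supported non-negative functions, giving condition (ii).

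The main obstacle I anticipate is twofold: ensuring continuity of $\beta_\alpha$ at the boundary of $\pi_\rho^{-1}(\supp\psi_\alpha)$, which forces me to use the closed-support version of the partition of unity ($\supp\psi_\alpha \subseteq U_\alpha$, not merely $\{\psi_\alpha > 0\}\subseteq U_\alpha$) so that the denominator stays bounded away from zero where the numerator lives; and guaranteeing the existence of a partition of unity with this property, which is precisely where paracompactness of $\rho\backslash X$, hence \emph{strong} properness rather than mere properness or Palais-properness, enters essentially.
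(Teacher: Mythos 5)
Your construction is correct, and it is essentially the standard argument behind the result the paper only cites (Bourbaki, Int\'egration, VII, \S 2.4): cover the paracompact Hausdorff orbit space by open images $\pi_\rho(V_x)$ on which the orbital means of suitable $\phi_x \in C_c(X)$ are strictly positive, take a locally finite subordinate partition of unity, and glue the normalized profiles $\psi_\alpha(\pi_\rho(\cdot))\,\phi_{x_\alpha}/A_\rho\phi_{x_\alpha}(\pi_\rho(\cdot))$, using that $\rho$-invariant factors pull out of $A_\rho$. You also correctly locate the two delicate points -- continuity at the boundary of $\pi_\rho^{-1}(\supp\psi_\alpha)$ via the closed-support convention, and the essential use of paracompactness (hence strong properness) both for the partition of unity and for the local finiteness that yields condition (ii) on saturations $\rho_G(C)$ of compact sets.
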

\begin{proof}
A proof can be found in \cite[VII, 2.4]{Bourbaki2004}.
\end{proof}

Note that the proofs of this lemma for quotient spaces $G/H$ in \cite{Reiter2000} and \cite{Folland1995} use paracompactness implicitly.

A Bruhat function nicely relates invariant functions on $X$ to functions on $\rho\backslash X$. Given a function $f \in C_c(X)$, we associate to it the $\rho$-invariant function $f_\rho$ on $X$ that is given by $f_\rho(x) := A_\rho f(\pi_\rho(x))$. Then
\begin{equation} \label{orbital_mean_of_invariant}
A_\rho(\beta\cdot f_\rho)(\pi_\rho(x)) = \int_G \rho_g\beta(x)\rho_gf_\rho(x)\d\mu_G(g) = A_\rho f(\pi_\rho(x))\int_G\rho_g\beta(x)\d\mu_G(g) = A_\rho f(\pi_\rho(x)).
\end{equation}
This shows that in our setting one can associate a function on $\rho\backslash X$ to an invariant function on $X$ without reference to a specific fundamental domain. Moreover, this implies surjectivity of $A_\rho:C_c(X)\to C_c(\rho\backslash X)$, since for $f \in C_c(\rho\backslash X)$, the function $x\mapsto \beta(x)f(\pi_\rho(x))$ is a preimage that lies in $C_c(X)$.

A simple example for the Weil formula \eqref{Weil_formula} is the case of a closed subgroup $H$ of a lcH group $G$ acting by left translation $L_h(g) := hg$. The space of orbits is the quotient space $H\backslash G$. The Haar measure $\mu_G$ is $L$-invariant, s.t. a function $q$ is needed that satisfies $q(h^{-1}g) = \Delta_G(h)q(g)$. This is true for $q = \Delta_G^{-1}$, showing that there is a unique Radon measure $\mu_{H\backslash G}$ on $H\backslash G$, s.t.
\begin{equation} \label{Weil_formula_left}
\int_Gf\d\mu_G^{-1} = \int_Gf\cdot\Delta_G^{-1}\d\mu_G = \int_{H\backslash G}A_Lf\d\mu_{H\backslash G},\ \ \ f \in C_c(G).
\end{equation}

Theorem \ref{thm:Weil_formula} can be generalized to $L^1(X,\mu_X)$ via classic measure theoretic arguments (as in \cite{Reiter2000} for quotients $G/H$). In particular, one can show that every $L^1$-function on $X$ is in $L^1(G)$ along almost all orbits. This generalization is sometimes called the extended Weil formula. Moreover, with a modified orbital mean operator
$$A_{\rho,q}f(\pi_\rho(x)) := \int_G\frac{\rho_gf}{\rho_gq}\d\mu_G(g),\ \ \ f \in L^1(G),\ x \in X,$$
the following version of \eqref{Weil_formula} holds:
\begin{equation} \label{Mackey_Bruhat_formula}
\int_X f\d\mu_X = \int_{\rho\backslash X} A_{\rho,q}f\d\mu_{\rho\backslash X},\ \ \ f \in L^1(X,\mu_X).
\end{equation}
This is sometimes called the (extended) Mackey-Bruhat formula.

The extended Weil formula can be used to study integrability of functions along orbits. In particular, we get the following results for $L^p$-functions that will be crucial for the definition of the Zak transform on $L^2(X)$ in Sections \ref{sec:Zak_transform_abelian} and \ref{sec:Zak_transform_nonabelian}.

\begin{corollary}[Orbital mean on $L^p(X)$] \label{cor:orbital_mean_L2}
Let $G$ be an lcH group that acts strongly proper on a Weil $G$-space $(X,\mu_{\rho\backslash X}^\#)$ via $\rho:G\times X\to X$. Let $f \in L^p(X,\mu_{\rho\backslash X}^\#)$ for $1\leq p \leq \infty$. Then
\begin{enumerate}
\item[(i)] The function $(\pi_\rho(x),g)\mapsto \rho_gf(x)$ is in $L^p((\rho\backslash X)\times G)$.
\item[(ii)] For almost every $\pi_\rho(x) \in \rho\backslash X$, the function $f_x:G\to\C$, $g\mapsto \rho_gf(x)$, is in $L^p(G)$.
\end{enumerate}
\end{corollary}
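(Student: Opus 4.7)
The plan is to deduce both claims from the extended Mackey--Bruhat formula \eqref{Mackey_Bruhat_formula} applied to $|f|^p\cdot q$, together with Tonelli's theorem. For $1\leq p<\infty$, the identification $\d\mu_{\rho\backslash X}^\#=q\d\mu_X$ shows that $f\in L^p(X,\mu_{\rho\backslash X}^\#)$ is equivalent to $|f|^p\cdot q\in L^1(X,\mu_X)$, so \eqref{Mackey_Bruhat_formula} gives
\begin{equation*}
\|f\|_{L^p(X,\mu_{\rho\backslash X}^\#)}^p=\int_X|f|^p\cdot q\d\mu_X=\int_{\rho\backslash X}A_{\rho,q}(|f|^p\cdot q)\d\mu_{\rho\backslash X}.
\end{equation*}
Because $\rho_g(|f|^p\cdot q)(x)=|\rho_gf(x)|^p\cdot\rho_gq(x)$, the integrand collapses to $A_{\rho,q}(|f|^p\cdot q)(\pi_\rho(x))=\int_G|\rho_gf(x)|^p\d\mu_G(g)$, which is well defined on $\rho\backslash X$ because left invariance of $\mu_G$ absorbs any change of orbit representative. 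Finiteness of the resulting double integral is exactly (i); Tonelli's theorem applied to the non-negative integrand then yields (ii), namely that the inner $G$-integral is finite for $\mu_{\rho\backslash X}$-almost every orbit.

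For $p=\infty$ I would argue by a separate null-set argument. Let $N:=\{x\in X:|f(x)|>\|f\|_\infty\}$, a $\mu_{\rho\backslash X}^\#$-null set, and apply the extended Weil formula to $\mathds{1}_N\cdot q$ (justified by monotone approximation from below by non-negative $C_c$-functions together with inner regularity of $\mu_{\rho\backslash X}^\#$) to obtain
\begin{equation*}
0=\int_{\rho\backslash X}\int_G\mathds{1}_N(\rho_g^{-1}(x))\d\mu_G(g)\d\mu_{\rho\backslash X}(\pi_\rho(x)).
\end{equation*}
Hence for $\mu_{\rho\backslash X}$-a.e.\ orbit the slice $\{g\in G:\rho_g^{-1}(x)\in N\}$ is $\mu_G$-null, i.e.\ $\|f_x\|_{L^\infty(G)}\leq\|f\|_\infty$ almost everywhere, which gives (i) and (ii) simultaneously.

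The sole non-algebraic step is the $L^1$-extension of the Weil formula for the non-negative measurable integrands $|f|^p\cdot q$ and $\mathds{1}_N\cdot q$; this is the standard monotone-convergence argument that the paper invokes in its discussion right before the statement, and it is the only (minor) obstacle in the proof. Everything else is bookkeeping with the multiplier $q$ and a single application of Tonelli's theorem.
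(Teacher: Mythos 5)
Your proof is correct and follows the same route as the paper's: apply the extended Weil (Mackey--Bruhat) formula to $|f|^p$ --- equivalently, to $|f|^p\cdot q$ with the modified orbital mean $A_{\rho,q}$, which is the same identity --- and read off (i) and (ii) from Tonelli. Your separate null-set argument for $p=\infty$ is a worthwhile supplement, since the paper's computation of $\|f\|_p^p$ only literally covers $1\leq p<\infty$.
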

\begin{proof}
(i) and (ii): Using the extended Weil formula on the function $|f|^p \in L^1(X,\mu_{\rho\backslash X^\#})$, we get
\begin{align*}
\infty > \|f\|_p^p &= \int_X|f|^p\d\mu_{\rho\backslash X}^\# = \int_{\rho\backslash X}\int_G |\rho_gf(x)|^p\d\mu_G(g)\d\mu_{\rho\backslash X}(\pi_\rho(x)),
\end{align*}
showing (i) and that $\pi_\rho(x)\mapsto \int_G|f_x|^p\d\mu_G$ is finite for almost every $\pi_\rho(x) \in \rho\backslash X$, i.e. that (ii) is true.
\end{proof}

\subsection{Integration on fundamental domains}
\label{subsec:fundamental_domain}
The Weil formula can be used to construct measures on measurable fundamental domains of the action. A fundamental domain is a set $F\subseteq X$, s.t. the restriction of $\pi_\rho$ to $F$ is a bijection. In other words, it is a set of representatives of the equivalence relation $x\sim_\rho y :\Leftrightarrow \pi_\rho(x) = \pi_\rho(y)$.

Unimodularity of $G$ turns out to be necessary to relate integration over $F$ to integration over $\rho\backslash X$ in a nice way. In particular, it makes sure that the integral of a $\rho$-invariant function over different fundamental domains agrees. Thus, {\it for the rest of this section assume that $G$ is unimodular}.

Now, let $(X,\mu_{\rho\backslash X}^\#)$ be a Weil $G$-space. For $f \in C_c(X)$, we use the Weil formula twice, first for the action $\rho$ on $X$, and then inside the orbital mean integral for the action of $G_x$ on $G$ by left translation $L_{g_x}(g) := g_xg$, $g_x \in G_x$, $g \in G$. Since $G$ is unimodular, the latter Weil formula is particularly simple (see eq. \eqref{Weil_formula_left}).
\begin{align*}
\int_X f\d\mu_{\rho\backslash X}^\# &= \int_{\rho\backslash X}\int_G \rho_g f\d\mu_G(g)\d\mu_{\rho\backslash X} \\
&= \int_{\rho\backslash X}\int_{G_x\backslash G}\int_{G_x}\rho_{g_x^{-1}g}f(x)\d\mu_{G_x}(g_x)\d\mu_{G_x\backslash G}(\pi_L(g))\d\mu_{\rho\backslash X}(\pi_\rho(x)) \\
&= \int_{\rho\backslash X}\mu_{G_x}(G_x)\int_{G_x\backslash G}\rho_g f(x)\d\mu_{G_x\backslash G}(\pi_L(g))\d\mu_{\rho\backslash X}(\pi_\rho(x)).
\end{align*}
The choice of the representatives of the orbits does not influence the volume of the stabilizer, since stabilizers of different points on the same orbit are conjugated via $G_{\rho_g(x)} = g G_x g^{-1}$, $g \in G$. Unimodularity of $G$ yields that the map $x\mapsto \mu_{G_x}(G_x)$ is $\rho$-invariant. For convenience, we normalize $\mu_{G_x\backslash G}$ and $\mu_{G_x}$ s.t. $\mu_{G_x}(G_x) = 1$ for all $x \in X$.

Now, given a measurable fundamental domain $F\subseteq X$ of $\rho$, we define a measure $\mu_F$ on $F$ as the pushforward of $\mu_{\rho\backslash X}$ w.r.t. the map $\varphi:\rho\backslash X\to F$, $\pi_\rho(x_0)\mapsto x_0$.
\begin{equation} \label{measure_fundamental_domain}
\mu_F := \varphi_*\mu_{\rho\backslash X}.
\end{equation}
Here, the pushforward measure of a measure $\mu$ by a measurable function $\psi$ is defined by $\psi_*\mu(B) := \mu(\psi^{-1}(B))$ for measurable sets $B$. The fact that integrals of $\rho$-invariant functions over different fundamental domains agree is clear from the definition.

In summary, we have shown that for every measurable fundamental domain $F$,
\begin{equation} \label{integration_fundamental_domain}
\int_X f\d\mu_{\rho\backslash X}^\# = \int_F\int_{G_{x_0}\backslash G}\rho_g f(x_0)\d\mu_{G_{x_0}\backslash G}(\pi_L(g))\d\mu_F(x_0),\ \ \ f \in L^1(X).
\end{equation}
This result agrees with the construction for discrete groups in \cite[VII, 2.10]{Bourbaki2004}.

Note that the function $f_x:G\to\C$, $f_x(g) := \rho_g f(x)$, is constant on cosets $G_x g$, so it is in fact well-defined as a function on $G_x\backslash G$. The orbit $\rho_G(x)$ is homeomorphic to the homogeneous space $G_x\backslash G$, so we can construct measures $\mu_{\rho_G(x)} := \gamma_*\mu_{G_x\backslash G}$, $\gamma:G_x g\mapsto \rho_g(x)$, on the orbits of $\rho$. This analysis shows that when almost all stabilizers are trivial and $G$ is $\sigma$-compact, then the Weil formula actually reduces to Fubini's theorem.

Viewing the orbits as homogeneous spaces also allows a formulation of the Weil formula in its form \eqref{integration_fundamental_domain} as a transform (which should probably be called the Weil transform) on $L^2(X)$. A function $f \in L^2(X)$ is mapped to the function $(x_0,\pi_L(g))\mapsto \rho_gf(x_0)$ for $x_0 \in F$, and $\pi_L(g) \in G_{x_0}\backslash G$. Applying the extended Weil formula to the function $|f|^2 \in L^1(X)$ shows that this transform is a unitary map from $L^2(X)$ onto the direct integral space $\int_F^\oplus L^2(G_{x_0}\backslash G)\d\mu_F(x_0)$.

The question if there is a `nice' measurable fundamental domain is a difficult question for general actions. As shown in \cite[VII, Ex. 12]{Bourbaki2004}, there is a fundamental domain whose characteristic function is continuous almost everywhere (or equivalently, whose boundary is negligable) for a proper continuous action of a countable discrete group. For lattice subgroups of lcH groups, one can even find relatively compact fundamental domains as shown in \cite{Kaniuth1998,Kutyniok2002}.

The choice of a fundamental domain and the definition \eqref{measure_fundamental_domain} are important to define the Zak transform (see Definitions \ref{def:Zak_abelian} and \ref{def:Zak_nonabelian}), since the definition will depend on this choice.

\section{The Zak transform for abelian actions} \label{sec:Zak_transform_abelian}

In this section, the Zak transform for abelian actions will be defined, its main properties will be derived, and different interpretations of its meaning will be discussed. The mathematical tools used to analyze the transform are the Weil formula that was derived in Section \ref{sec:Weil_formula}, and Fourier analysis on lca groups. As will turn out, the representation theoretic decomposition of $\rho$ is fully described by the Zak transform.

\subsection{The abelian Zak transform on $L^1(X)$} 
\label{subsec:Zak_abelian_L1}

Before defining the Zak transform for abelian actions, we recall the main objects and results from Fourier analysis on lca groups.
Let $G$ be an lca group. The dual group $\widehat{G}$ of $G$ is the set of characters of $G$, which are the continuous homorphisms from $G$ to $\T := \{z \in \C \,|\, |z| = 1\}$.
\begin{equation} \label{dual_group}
\widehat{G} := \{\chi:G\to\T \,|\, \chi\ \mbox{continuous},\ \chi(gh) = \chi(g)\chi(h),\ g,h \in G\}.
\end{equation}
With the point-wise product and the compact open topology, $\widehat{G}$ is an lca group. Given a Haar measure $\mu_G$ of $G$, the Fourier transform on an lca group is then defined as
\begin{equation} \label{Fourier_lca}
\widehat{f}(\chi) := \int_G f(g)\overline{\chi(g)}\d\mu_G(g),\ \ \ f \in L^1(G),\ \chi \in \widehat{G}.
\end{equation}
The Fourier transform can be inverted as follows. For almost every $g \in G$,
\begin{equation} \label{Fourier_inversion}
f(g) = \int_{\widehat{G}}\widehat{f}(\chi)\chi(g)\d\mu_{\widehat{G}}(\chi),\ \ \ f \in L^1(G),\ \widehat{f} \in L^1(\widehat{G}).
\end{equation}

Another construction that will be used is the so-called reciprocal group $H_G^\bot$ of a closed subgroup $H$ of $G$.
\begin{equation} \label{reciprocal_group}
H_G^\bot := \{\chi \in \widehat{G} \,|\, \chi(h) = 1\ \mbox{for all}\ h \in H\},
\end{equation}
which is also called the orthogonal subgroup in \cite{Reiter2000} or the annihilator of $H$ in \cite{Hewitt1979} and generalizes the reciprocal lattice of a lattice subgroup of $\R^d$. The reciprocal group is a central object in the Poisson summation formula for lca groups, which reads
\begin{equation} \label{Poisson_summation}
\int_H f\d\mu_H = \int_{H_G^\bot}\widehat{f}\d\mu_{H_G^\bot},\ \ \ f|_H \in L^1(H),\ \widehat{f}|_{H_G^\bot} \in L^1(H_G^\bot).
\end{equation}

We define the Zak transform of a function as the mean of its modulation by a character along the orbits.

\begin{definition}[Zak transform for abelian actions] \label{def:Zak_abelian}
Let $G$ be an lca group, $(X,\mu_{\rho\backslash X}^\#)$ a Weil $G$-space, and $f \in L^1(X,\mu_{\rho\backslash X}^\#)$. Furthermore, let $F$ be a measurable fundamental domain of the action $\rho$. The Zak transform $\calZ_\rho f$ of $f$ is defined as
\begin{equation} \label{Zak_abelian}
\calZ_\rho f(x_0,\chi) := \int_G \rho_gf(x_0)\overline{\chi(g)}\d\mu_G(g),\ \ \ x_0 \in F,\ \chi \in \widehat{G}.
\end{equation}
We also define the extended Zak transform $\calZ_\rho^X f$ on $X\times \widehat{G}$ by formula \eqref{Zak_abelian} with $x_0 \in X$.
\end{definition}

Note, that the transform is well-defined, since an $L^1$-function is in $L^1(G)$ along $\mu_{\rho\backslash G}$-almost all orbits. This fact justified the extension of the Weil formula to $L^1(X)$.

{\bf Example:} Consider the discrete abelian group $G = \bfA\Z^d$, $\bfA \in \GL(d,\R)$, acting on $X = \R^d$ by translation $\rho_x(y) = x+y$, $x,y \in \R^d$. The Lebesgue measure $\calL$ on $\R^d$ is translation-invariant, s.t. it can be chosen as the measure $\mu_{\rho\backslash X}^\# := \calL$. The measures on $G$ and $\rho\backslash X$ that satisfy the Weil formula are then the counting measure and the measure induced from the restriction of $\calL$ to a fundamental domain $F$, respectively. Denoting the characters of $G$ by $\chi^{(k)}(x) := e^{ik\cdot x}$, $x \in \R^3$, $k \in \R^3/G'\cong \widehat{G}$, where $G' := 2\pi\bfA^{-T}\Z^d$ denotes the reciprocal lattice (which parametrizes the reciprocal group $G_{\R^3}^\bot$), the resulting Zak transform $\calZ_\rho$ is the classic Zak transform
\begin{equation} \label{classic_Zak}
\calZ_\rho f(x_0,\chi^{(k)}) := \sum_{v \in \bfA\Z^d} f(x_0-v)e^{-ik\cdot v},\ \ \ x_0 \in F,\ k \in \R^3/G',\ f \in L^1(\R^3).
\end{equation}

To simplify notation we chose the notation $\calZ_\rho$ for the Zak transform, even though one has to keep in mind that the definition depends on the choice of fundamental domain $F$, so a notation like $\calZ_\rho^F$ would be more precise. The reason for restricting the first argument to a fundamental domain is the following equivariance property of the extended Zak transform.

\begin{lemma}[Invariance of the extended Zak transform] \label{lem:Zak_invariance}
With the assumptions of Definition \ref{def:Zak_abelian}, the extended Zak transform satisfies
\begin{equation} \label{Zak_invariance}
\calZ_\rho^Xf(\rho_g^{-1}(x_0),\chi) = \chi(g)\calZ_\rho f(x_0,\chi),\ \ \ g \in G,\ x_0 \in F,\ \chi \in \widehat{G}.
\end{equation}
In particular, the Zak transform vanishes on the set $\{(x_0,\chi) \,|\, \chi\not\in (G_{x_0})_G^\bot\}$.
\end{lemma}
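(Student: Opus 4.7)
The plan is to prove the equivariance \eqref{Zak_invariance} by a direct substitution in the Haar integral, and then deduce the vanishing statement by specializing to $g \in G_{x_0}$.

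First I would write out the left-hand side of \eqref{Zak_invariance} using the definition of the extended Zak transform and of the linear action \eqref{action_on_functions}:
$$\calZ_\rho^X f(\rho_g^{-1}(x_0),\chi) = \int_G f\bigl(\rho_h^{-1}(\rho_g^{-1}(x_0))\bigr)\overline{\chi(h)}\d\mu_G(h) = \int_G f\bigl(\rho_{gh}^{-1}(x_0)\bigr)\overline{\chi(h)}\d\mu_G(h),$$
where I used that $\rho$ is a group action so $\rho_h^{-1}\rho_g^{-1} = (\rho_g\rho_h)^{-1} = \rho_{gh}^{-1}$. Then I would perform the substitution $h' := gh$. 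Because $G$ is locally compact abelian, hence unimodular, the Haar measure is translation-invariant so $\d\mu_G(h) = \d\mu_G(h')$, and $\overline{\chi(h)} = \overline{\chi(g^{-1}h')} = \chi(g)\overline{\chi(h')}$ by the homomorphism property \eqref{dual_group}. Pulling the scalar $\chi(g)$ out of the integral yields $\chi(g)\calZ_\rho^X f(x_0,\chi)$, which coincides with $\chi(g)\calZ_\rho f(x_0,\chi)$ since $x_0 \in F$.

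For the vanishing claim, I would observe that the equivariance in fact holds for every $x_0 \in X$ (the argument above never used that $x_0$ lies in $F$). Take any $x_0 \in F$ and any $g \in G_{x_0}$, so that $\rho_g^{-1}(x_0) = x_0$. Specializing \eqref{Zak_invariance} gives
$$\calZ_\rho f(x_0,\chi) = \calZ_\rho^X f(\rho_g^{-1}(x_0),\chi) = \chi(g)\calZ_\rho f(x_0,\chi),$$
so $(1-\chi(g))\calZ_\rho f(x_0,\chi) = 0$ for every $g \in G_{x_0}$. If $\chi \notin (G_{x_0})_G^\bot$, there exists $g \in G_{x_0}$ with $\chi(g)\neq 1$, forcing $\calZ_\rho f(x_0,\chi) = 0$.

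This proof is essentially a one-line change of variables, and I do not expect any real obstacle. The only small subtlety worth flagging is the reason for introducing the extended transform $\calZ_\rho^X$ in the first place: the point $\rho_g^{-1}(x_0)$ need not lie in the chosen fundamental domain $F$, so \eqref{Zak_invariance} cannot be formulated purely in terms of $\calZ_\rho$. Apart from this bookkeeping, the argument is a direct consequence of the invariance of Haar measure on the abelian group $G$ together with the multiplicativity of characters.
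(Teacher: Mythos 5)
Your proposal is correct and follows exactly the paper's argument: the equivariance is the change of variables $h\mapsto gh$ in the Haar integral combined with multiplicativity of $\chi$ (the paper compresses this to ``immediate consequence of the definition, using left-invariance of $\mu_G$''), and the vanishing claim is obtained by specializing to $g\in G_{x_0}$ with $\chi(g)\neq 1$, precisely as in the paper. One cosmetic remark: you invoke unimodularity, but the substitution $h\mapsto gh$ is a left translation, so left-invariance of $\mu_G$ already suffices.
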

\begin{proof} Equation \eqref{Zak_invariance} is an immediate consequence of the definition \eqref{Zak_abelian}, using left-invariance of $\mu_G$.

Now, when $x_0 \in F$, and $\chi \not\in (G_{x_0})_G^\bot$, then there is a $g \in G_{x_0}$, s.t. $\chi(g)\not= 1$. In particular, using eq. \eqref{Zak_invariance},
$$\calZ_\rho f(x_0,\chi) = \calZ_\rho f(\rho_g^{-1}(x_0),\chi) = \chi(g)\calZ_\rho f(x_0,\chi),$$
implying that $\calZ_\rho f(x_0,\chi) = 0$.
\end{proof}

The invariance property \eqref{Zak_invariance} shows that the extended Zak transform $\calZ_\rho^X f$ is $\rho$-equivariant in the first argument w.r.t. the second argument. So, the operator $\calZ_\rho^X$ maps to the space of functions with this property. This viewpoint captures the fact that the extended Zak transform is uniquely determined by the restriction to any fundamental domain abstractly. However, in applications it is convenient to work with a special choice of fundamental domain.

The fact that the Zak transform vanishes for $(x_0,\chi)$ with $\chi\not\in(G_{x_0})_G^\bot$ is another manifestation of the fact that the functions $f_{x_0}:g \mapsto \rho_gf(x_0)$ are actually functions on $G_{x_0}\backslash G$. Since it can be shown that $\widehat{G_{x_0}\backslash G}\cong (G_{x_0})_G^\bot$, this vanishing property is a consequence of the Poisson summation formula on $G$ w.r.t. $G_{x_0}$. In principle, one could define the Zak transform of a function as a function on the set $\{(x_0,\chi) \,|\, x_0 \in F,\ \chi \in (G_{x_0})_G^\bot\}$. For the moment, the idea is to keep the space on which $\calZ_\rho f$ lives simple and capture the details in its properties. These considerations will be made precise in Theorem  \ref{thm:Zak_abelian}.

Note that in the case that $X=G$ is an abelian lcH group with a lattice $H$ acting by right translation, Definition \ref{def:Zak_abelian} agrees up to the conjugation of the character with the one given in \cite{Kaniuth1998}. To see this, one needs to identify a fundamental domain of $H_G^\bot$ with the group $\widehat{H}$ via the isometry $\widehat{G}/H_G^\bot \cong \widehat{H}$ (see e.g. \cite[(4.39)]{Folland1995}). Also the Zak transform on semi-direct product groups with the action of a lattice subgroup of the abelian factor in \cite{Arefijamaal2013} is a special case of our definition. In \cite{Barbieri2015} and \cite{Saliani2014}, the authors propose a more general representation theoretic approach. They consider a unitary representation $\sigma$ of a discrete lca group $G$ on $L^2(X)$ and the transform $\sum_{g \in G}\sigma_g f(x)\overline{\chi}(g)$, $x \in X$, $\chi \in \widehat{G}$ for $f \in L^2(X)$. This approach is natural in frame theory, but too general for applications that use the additional structure resulting from the action on the underlying space considered here.

The definition of the Zak transform can be interpreted in different ways. First, it may be viewed as the Fourier transform along the orbits of the action. Given a function $f \in L^1(X)$, the Zak transform is given by
\begin{equation} \label{Zak_Fourier}
\calZ_\rho f(x_0,\chi) = \widehat{f_{x_0}}(\chi),\ \ \ x_0 \in F,\ \chi \in \widehat{G},
\end{equation}
where $f_{x_0}(g) := \rho_g f(x_0)$, $x_0 \in F$, $g \in G$. From this viewpoint, the Zak transform could be termed an orbit-frequency decomposition.

The complementary point of view results from fixing a character $\chi \in \widehat{G}$. Then the operator $P_\rho^\chi: f\mapsto \calZ_\rho^Xf(\cdot,\chi)$ is a so-called symmetry projection, and
\begin{equation} \label{Zak_Wigner}
\calZ_\rho^X f(x,\chi) = P_\rho^\chi f(x),\ \ \ x \in X,\ \chi \in \widehat{G}.
\end{equation}
As seen in Lemma \ref{lem:Zak_invariance}, $P_\rho^\chi$ maps $f$ into the space of $\chi$-equivariant functions on $X$. This approach goes back to Wigner \cite{Wigner1931}, who used these operators in quantum theory. The same viewpoint is taken by Bourbaki \cite[VII, \S 2]{Bourbaki1989}, where instead of characters, representations on the real multiplicative group are considered.

Both viewpoints can be summarized using the Heisenberg group $H(G) := G\ltimes_\tau (\widehat{G}\times\T)$ of $G$, where $\tau[g](\chi,z) := (\chi,\chi(g)z)$, and the group product is given by
\begin{equation} \label{product_Heisenberg}
(g,(\chi,z))\cdot(g',(\chi',z')) := (gg',\tau[g](\chi,z)\cdot(\chi',z')) = (gg',(\chi\chi',\chi(g)zz'))
\end{equation}
for $g,g' \in G$, $\chi,\chi' \in \widehat{G}$, and $z,z'\in \T$. The Heisenberg group is the group theoretic version of a generalized time-frequency plane or phase space. More precisely, phase space is the quotient $H(G)/\T$, where $\T$ is identified with the center $Z(H(G)) = \{(e,(1,z)) \,|\, z \in \T\}$ of $H(G)$. This quotient is in fact isomorphic to the direct product of group and dual group:
\begin{equation} \label{phase_space}
H(G)/Z(H(G)) \cong G\times\widehat{G}.
\end{equation}
Now the projective representation $\xi$ of $G\times \widehat{G}$ on $L^2(X)$, that is given by
\begin{equation} \label{proj_Schroedinger_rep}
\xi_{(g,\chi)}f(x) := \rho_gf(x)\overline{\chi(g)},\ \ \ g \in G,\ \chi \in \widehat{G}.
\end{equation}
naturally defines a representation of $H(G)$, namely (up to conjugation of the character) the classic Schr\"odinger representation $\xi^{H(G)}$ of the Heisenberg group that is the lift of $\xi$ to the central extension $H(G)$ of $G\times\widehat{G}$:
\begin{equation} \label{Schroedinger_rep}
\xi_{(g,(\overline{\chi},z))}^{H(G)}f(x) := \rho_gf(x)\chi(g)z,\ \ \ g \in G,\ \chi \in \widehat{G},\ z \in \T.
\end{equation}
The Zak transform can be interpreted as a partial orbital mean operator w.r.t. $\xi$:
\begin{equation} \label{Zak_Heisenberg}
\calZ_\rho f(x_0,\chi) = \int_G\xi_{(g,\chi)}f(x)\d\mu_G(g),\ \ \ x_0 \in F,\ \chi \in \widehat{G}.
\end{equation}
This viewpoint is the reason for the importance of the classic Zak transform for time-frequency analysis and in particular for Gabor analysis.

Next, we use the Fourier interpretation \ref{Zak_Fourier} to formulate an inversion formula. We simply use the Fourier inversion formula \eqref{Fourier_inversion} for the second argument of the Zak transform to get
\begin{equation} \label{Zak_inversion_1}
f(x) = \int_{\widehat{G}}\calZ_\rho f(x_0,\chi)\chi(g)\d\mu_{\widehat{G}}(\chi),\ \ \ \mbox{where}\ \rho_g(x) = x_0 \in F,
\end{equation}
almost everywhere for functions $f \in L^1(G)$ with $\calZ_\rho f(x_0,\cdot) \in L^1(\widehat{G})$ for almost all $x_0 \in F$.

Note that the right hand side is well-defined, even though the equation $x = \rho_g^{-1}(x_0)$ has multiple solutions, namely all $(x_0,g'g)$ with $g' \in G_{x_0}$. As seen in Lemma \ref{Zak_invariance}, the Zak transform of a function vanishes for all $\chi \in \widehat{G}$ that are not constant on stabilizers. Consequently, choosing a measure $\mu_{(G_{x_0})_G^\bot}$ on $(G_{x_0})_G^\bot\cong\widehat{G_{x_0}\backslash G}$, s.t. the Poisson summation formula for the subgroup $G_{x_0}$ of $G$ holds with normalized measures on $G_{x_0}$, $x_0 \in F$, one gets the alternative version
\begin{equation} \label{Zak_inversion_2}
f(x) = \int_{(G_{x_0})_G^\bot}\calZ_\rho f(x_0,\chi)\chi(g)\d\mu_{(G_{x_0})_G^\bot}(\chi),\ \ \ \mbox{where}\ \rho_g(x) = x_0 \in F,
\end{equation}
for $f$ and $x$ as above.

To rigorously define the inverse Zak transform, we formulate the condition $\rho_g(x) = x_0 \in F$ in terms of measures. Consider the point measures $\delta_x$ on $F$ and $\delta_x^{(x_0)}$ on $G_{x_0}\backslash G$ that are given by
\begin{equation}
\delta_x(B_F) = \begin{cases} 1, & \mbox{there is a}\ g \in G,\ \mbox{s.t.}\ \rho_g(x) \in B_F, \\ 0, & \mbox{else},\end{cases}
\end{equation}
for Borel-sets $B_F\subseteq F$, and
\begin{equation}
\delta_x^{(x_0)}(B_{G_{x_0}\backslash G}) = \begin{cases} 1, & \mbox{there is a}\ \pi_L(g) \in B_{G_{x_0}\backslash G},\ \mbox{s.t.}\ \rho_g(x) = x_0, \\ 0, & \mbox{else},\end{cases}
\end{equation} 
for Borel-sets $B_{G_{x_0}\backslash G}\subseteq G_{x_0}\backslash G$. With these measures we summarize the above considerations in the following result.

\begin{proposition}[$L^1$-Zak inversion theorem] \label{prop:Zak_inversion}
Let $G$ be an lca group that acts strongly proper on a Weil space $(X,\mu_{\rho\backslash X}^\#)$, and $F$ a measurable fundamental domain of the action $\rho$. The inverse Zak transform $\calZ_\rho^{-1}$ is defined as
\begin{equation} \label{Zak_inversion}
\calZ_\rho^{-1} h(x) := \int_F\int_{(G_{x_0})_G^\bot}h(x_0,\chi)\int_{G_{x_0}\backslash G}\chi(g)\d\delta_x^{(x_0)}(\pi_L(g))\d\mu_{(G_{x_0})_G^\bot}(\chi)\d\delta_x(x_0)
\end{equation}
for $h(x_0,\cdot) \in L^1((G_{x_0})_G^\bot)$ for almost all $x_0 \in F$. It satisfies $f = \calZ_\rho^{-1}\calZ_\rho f$ point-wise almost everywhere for $f \in L^1(G)$ with $\calZ_\rho f(x_0,\cdot)\in L^1((G_{x_0})_G^\bot)$ for almost all $x_0 \in F$.
\end{proposition}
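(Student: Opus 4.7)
The plan is to reduce the inverse formula \eqref{Zak_inversion} to classical Fourier inversion on each homogeneous space $G_{x_0}\backslash G$, and then to transfer the resulting pointwise almost-everywhere statement from the fibered picture to a single almost-everywhere statement on $X$ using the fundamental-domain form of the Weil formula.

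First I simplify \eqref{Zak_inversion}. Because $F$ meets every $\rho$-orbit exactly once, for each $x \in X$ there is a unique $x_0 \in F$ and a unique coset $\pi_L(g) \in G_{x_0}\backslash G$ with $\rho_g(x) = x_0$. Hence $\delta_x$ and $\delta_x^{(x_0)}$ are the corresponding Dirac masses and the formula collapses to
\begin{equation*}
\calZ_\rho^{-1}h(x) = \int_{(G_{x_0})_G^\bot} h(x_0,\chi)\,\chi(g)\,\d\mu_{(G_{x_0})_G^\bot}(\chi),
\end{equation*}
where $\chi(g)$ is unambiguous because $\chi \in (G_{x_0})_G^\bot$ is constant on $G_{x_0}$-cosets.

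Next I recast the Zak transform as an ordinary Fourier transform on $G_{x_0}\backslash G$. For $f \in L^1(X)$, Corollary \ref{cor:orbital_mean_L2} places $f_{x_0}(g) := \rho_g f(x_0)$ in $L^1(G)$ for almost every $x_0 \in F$, and the left-$G_{x_0}$-invariance $f_{x_0}(g_x g) = f(\rho_{g^{-1}}\rho_{g_x^{-1}}(x_0)) = f_{x_0}(g)$ lets $f_{x_0}$ descend to an element of $L^1(G_{x_0}\backslash G)$. Using the Weil formula \eqref{Weil_formula_left} for the closed subgroup $G_{x_0}$ together with the identity $\chi(g_x)=1$ for $\chi \in (G_{x_0})_G^\bot$, and normalizing $\mu_{G_{x_0}}(G_{x_0})=1$ as specified before \eqref{measure_fundamental_domain}, the integral defining $\calZ_\rho f(x_0,\chi)$ collapses to $\widehat{f_{x_0}}(\chi)$, the Fourier transform on the lca group $G_{x_0}\backslash G$ with dual $(G_{x_0})_G^\bot$; this simultaneously reinterprets the vanishing in Lemma \ref{lem:Zak_invariance} as the Poisson summation formula \eqref{Poisson_summation}. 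Under the hypothesis $\widehat{f_{x_0}} \in L^1((G_{x_0})_G^\bot)$ for a.e.\ $x_0 \in F$, Fourier inversion \eqref{Fourier_inversion} on $G_{x_0}\backslash G$ gives
\begin{equation*}
f_{x_0}(\pi_L(g)) = \int_{(G_{x_0})_G^\bot}\widehat{f_{x_0}}(\chi)\,\chi(g)\,\d\mu_{(G_{x_0})_G^\bot}(\chi)
\end{equation*}
for almost every $\pi_L(g) \in G_{x_0}\backslash G$, and since $f(x) = f_{x_0}(g)$ whenever $\rho_g(x) = x_0$, the first step identifies the right-hand side with $\calZ_\rho^{-1}\calZ_\rho f(x)$.

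The main technical point I expect to require care is transferring the ``almost every $\pi_L(g)$ for almost every $x_0$'' statement from the fibered picture to a single ``almost every $x$'' statement on $X$. This is handled by the fundamental-domain formula \eqref{integration_fundamental_domain}: the measure $\mu_{\rho\backslash X}^\#$ on $X$ is precisely the push-forward of the iterated measure $\d\mu_{G_{x_0}\backslash G}(\pi_L(g))\,\d\mu_F(x_0)$ under the bimeasurable bijection $(x_0,\pi_L(g))\mapsto \rho_{g^{-1}}(x_0)$ from the fibered space $\{(x_0,\pi_L(g)) : x_0 \in F,\ \pi_L(g) \in G_{x_0}\backslash G\}$ onto $X$, the bijectivity being a consequence of $F$ being a fundamental domain. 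Hence null sets correspond and the pointwise identity passes to $\mu_{\rho\backslash X}^\#$-a.e.\ $x \in X$.
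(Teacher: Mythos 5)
Your proposal is correct and follows essentially the same route as the paper, which obtains the result from the identity $\calZ_\rho f(x_0,\chi)=\widehat{f_{x_0}}(\chi)$, the $L^1$-Fourier inversion formula, and the restriction to $(G_{x_0})_G^\bot$ via the vanishing property and the Poisson-summation normalization of $\mu_{(G_{x_0})_G^\bot}$. Your only cosmetic deviation is performing the inversion directly on the quotient $G_{x_0}\backslash G$ rather than on $\widehat{G}$ and then restricting, and your explicit handling of the null-set transfer via \eqref{integration_fundamental_domain} is a welcome elaboration of a point the paper leaves implicit.
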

\begin{proof}
The proof is a direct consequence of the above discussion, the equations \eqref{Zak_Fourier} and \eqref{Zak_inversion_2}, and the $L^1$-Fourier inversion formula \eqref{Fourier_inversion}.
\end{proof}
The inversion formula \eqref{Zak_inversion} can be read as a decomposition of a function into the measures $\int_{G_{x_0}\backslash G}\chi(g)\d\delta_x^{(x_0)}$ on $F\times G_{x_0}\backslash G$. This is a generalization of Zak's viewpoint as given in \cite[(5), (12)]{Zak1967} and will be discussed in more detail below Theorem \ref{thm:Zak_abelian}.

{\bf Example:} Let's reconsider the classic case that the lattice $G = \bfA\Z^d$, $\bfA \in \GL(d,\R)$, acts on $X = \R^d$ by translation. Since the stabilizers of this action are trivial, we have $G_{x_0}\backslash G = G$ and $(G_{x_0})^\bot_G = \widehat{G}$ for all $x_0 \in F$, where $F$ is a relatively compact fundamental domain of the action. The measure $\delta^{(x_0)}_x$ on $G$ is simply the Kronecker delta $\delta_{x_0-x,v}$. Consequently, the Zak reconstruction formula reads
\begin{equation} \label{classic_Zak_inversion}
\calZ_\rho^{-1}h(x) = \int_F\int_{\R^3/G'}h(x_0,\chi^{(k)})\sum_{v \in G}e^{ik\cdot v}\delta_{x_0-x,v}\d\mu_{\R^3/G'}(k)\d\delta_x(x_0),\
\end{equation}
where again $G' = 2\pi\bfA^{-T}\Z^d$ denotes the reciprocal lattice of $G$, s.t. $\R^3/G'\cong \widehat{G}$ parametrizes the dual group, and $\mu_{\R^3/G'}$ is the measure induced by this parametrization. This agrees with Zak's inversion formula \cite[(5),(12)]{Zak1967} up to conjugation of the character. The constant in Zak's formula is contained in the normalization of the measure $\mu_{\R^3/G'}$.

\subsection{The abelian Zak transform on $L^2(X)$} 
\label{subsec:Zak_abelian_L2}

Using Corollary \ref{cor:orbital_mean_L2}, we can extend the Zak transform to $L^2(G)$ and show that it is an isometry. For this purpose, we need the Plancherel theorem for lca groups which says that the Fourier transform is a Hilbert space isomorphism from $L^2(G)$ to $L^2(\widehat{G})$. This can be formulated in a representation theoretic way as follows. The Fourier transform is a Hilbert space isomorphism from $L^2(G)$ to $\int_{\widehat{G}}^\oplus\C\d\mu_{\widehat{G}} = L^2(\widehat{G})$, where the integral is a direct integral of the measurable field of one-dimensional Hilbert spaces $\C$ on which the characters $\chi \in \widehat{G}$ act via the irreducible unitary modulation representations $M_\chi:G\to \U(\C)$, $M_\chi(g)z := \chi(g)z$, where $\U(\C)$ denotes the group of continuous unitary operators on $\C$. The Fourier transform intertwines the left regular representation $L$ (left translation, $L_gf(h) := f(g^{-1}h)$) of $G$ on $L^2(G)$ with the direct integral representation $\int_{\widehat{G}}^\oplus \overline{M_\chi}\d\mu_{\widehat{G}}(\chi)$ on $L^2(\widehat{G})$. This is a precise formulation of the statement that the Fourier transform diagonalizes the translation operator.

The corresponding theorem for the Zak transform is the following.

\begin{theorem}[Abelian Zak transform on $L^2(X)$] \label{thm:Zak_abelian}
Let $G$ be an abelian lcH group that acts strongly proper on a Weil $G$-space $(X,\mu_{\rho\backslash X}^\#)$ via $\rho:G\times X\to X$ and $F$ a measurable fundamental domain of the action $\rho$. The Zak transform can be extended to a Hilbert space isomorphism
\begin{equation} \label{Zak_isometry}
\calZ_\rho:L^2(X)\to\int_F^\oplus\int_{(G_{x_0})_G^\bot}^\oplus\C\d\mu_{(G_{x_0})_G^\bot}\d\mu_F(x_0) = \int_F^\oplus L^2((G_{x_0})_G^\bot)\d\mu_F(x_0),
\end{equation}
where $\mu_F$ is given by eq. \eqref{measure_fundamental_domain}, and $\mu_{(G_{x_0})_G^\bot}$ is normalized, s.t. the Poisson summation formula \eqref{Poisson_summation} for $G_{x_0}$ holds with $\mu_{G_{x_0}}(G_{x_0}) = 1$ for $x_0 \in F$.

The Zak transform intertwines the action $\rho$ of $G$ on $L^2(X)$ with the left modulation representation of $G$ on $\int_F^\oplus L^2((G_{x_0})_G^\bot)\d\mu_F(x_0)$:
\begin{equation} \label{Zak_intertwining}
\calZ_\rho[\rho_g f](x_0,\chi) = \chi(g)\calZ_\rho f(x_0,\chi),\ \ \ g \in G,\ x_0 \in F,\ \chi \in (G_{x_0})_G^\bot,
\end{equation}
for $f \in L^2(G)$.

The inverse Zak transform $\calZ_\rho^{-1}$ is given by the extension of eq. \eqref{Zak_inversion} to $\int_F^\oplus L^2((G_{x_0})_G^\bot)\d\mu_F(x_0)$.
\end{theorem}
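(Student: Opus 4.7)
The plan is to establish the isometry on a dense subspace and extend by continuity. Start with $f\in L^1(X,\mu_{\rho\backslash X}^\#)\cap L^2(X,\mu_{\rho\backslash X}^\#)$. By Corollary~\ref{cor:orbital_mean_L2} applied to $|f|$ and $|f|^2$ in turn, the orbital restriction $f_{x_0}\colon g\mapsto\rho_gf(x_0)$ lies in $L^1(G)\cap L^2(G)$ for $\mu_F$-almost every $x_0\in F$, so $\calZ_\rho f(x_0,\cdot)=\widehat{f_{x_0}}$ is a bona fide Fourier transform, and by Lemma~\ref{lem:Zak_invariance} it is supported on $(G_{x_0})_G^\bot$.

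The crux is the slicewise isometry identity
\begin{equation*}
\int_{(G_{x_0})_G^\bot}|\calZ_\rho f(x_0,\chi)|^2\,d\mu_{(G_{x_0})_G^\bot}(\chi)=\int_{G_{x_0}\backslash G}|\rho_gf(x_0)|^2\,d\mu_{G_{x_0}\backslash G}(\pi_L(g))
\end{equation*}
for $\mu_F$-almost every $x_0\in F$. Since $f_{x_0}$ is $G_{x_0}$-invariant, the Weil formula for the left-translation action of $G_{x_0}$ on $G$ (with $\mu_{G_{x_0}}(G_{x_0})=1$) lets $f_{x_0}$ be identified with $\tilde f_{x_0}\in L^1\cap L^2(G_{x_0}\backslash G)$ having the same Fourier transform, now viewed on $\widehat{G_{x_0}\backslash G}\cong(G_{x_0})_G^\bot$. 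The normalization of $\mu_{(G_{x_0})_G^\bot}$ pinned down by the Poisson summation formula \eqref{Poisson_summation} with unit stabilizer mass is precisely the one for which the Plancherel theorem on the lca quotient $G_{x_0}\backslash G$ holds, yielding the claimed slicewise identity. Integrating in $x_0$ against $\mu_F$ and invoking the extended Weil formula \eqref{integration_fundamental_domain} applied to $|f|^2$ gives the global isometry on $L^1\cap L^2(X)$, which then extends to all of $L^2(X)$ by density.

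Surjectivity follows by slicewise inversion: given $h$ in the target direct integral, apply the $L^2$-inverse Fourier transform fiberwise to obtain a measurable field on $F\times(G_{x_0}\backslash G)$, and unfold it along the fibration $\rho_G(x)\cong G_{x_0}\backslash G$ using the description of $L^2(X)$ provided by \eqref{integration_fundamental_domain} and the pushforward $\varphi\colon\rho\backslash X\to F$. The intertwining property \eqref{Zak_intertwining} is a one-line calculation: substituting $h\mapsto hg^{-1}$ in \eqref{Zak_abelian} and using left-invariance of $\mu_G$ (together with commutativity of $G$) pulls out the factor $\chi(g)$. Finally, the inversion statement extends from Proposition~\ref{prop:Zak_inversion} to the full direct integral by continuity, using the isometry just established.

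The main obstacle I expect is the bookkeeping of measures and the verification that the direct integral structure is well-defined when the fiber Hilbert space $L^2((G_{x_0})_G^\bot)$ genuinely varies with $x_0$. One must check that the field $x_0\mapsto (G_{x_0})_G^\bot$ together with its Haar measures is measurable in a sufficient sense, and that the normalizations of $\mu_{G_{x_0}}$, $\mu_{G_{x_0}\backslash G}$, and $\mu_{(G_{x_0})_G^\bot}$ assemble coherently across $F$ so that the slicewise Plancherel identity can legitimately be integrated in $x_0$. The Cartan continuity of the stabilizer map $x\mapsto G_x$ provides the needed measurability and underpins this gluing.
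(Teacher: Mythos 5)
Your proof is correct and takes essentially the same route as the paper's: the Weil formula, Plancherel's theorem, and the Poisson-summation normalization of $\mu_{(G_{x_0})_G^\bot}$ yield the isometry, fiberwise Fourier inversion gives surjectivity, and a change of variables gives the intertwining. The only organizational difference is that you pass to the fundamental domain via \eqref{integration_fundamental_domain} first and then apply Plancherel fiberwise on the quotients $G_{x_0}\backslash G$, whereas the paper applies Plancherel on $G$ over the orbit space and only afterwards localizes to $F$ and restricts to $(G_{x_0})_G^\bot$ via the vanishing property of Lemma \ref{lem:Zak_invariance}; the two computations are equivalent, and your closing remark on the measurability of the field $x_0\mapsto L^2((G_{x_0})_G^\bot)$ correctly identifies a point the paper leaves implicit.
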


\begin{proof}{\bf (of Theorem \ref{thm:Zak_abelian})} By Corollary \ref{cor:orbital_mean_L2}, the functions $f_x$, $g\mapsto \rho_gf(x)$, are in $L^2(G)$ for almost all $\pi_\rho(x) \in \rho\backslash X$, so the Zak transform is well-defined almost everywhere by eq. \eqref{Zak_Fourier}.

We show that the Zak transform is an isometry by combining the Weil formula with Plancherel's theorem. For $f \in L^2(X,\mu_{\rho\backslash X}^\#)$, we use eq. \eqref{Zak_Fourier} to get
\begin{align*}
\|f\|_2^2 &\stackrel{\rm Weil}{=} \int_{\rho\backslash X}\int_G |f_x|^2\d\mu_G\d\mu_{\rho\backslash X}(\pi_\rho(x)) \stackrel{\rm Plancherel}{=} \int_{\rho\backslash X}\int_{\widehat{G}}|\widehat{f_x}|^2\d\mu_{\widehat{G}}\d\mu_{\rho\backslash X}(\pi_\rho(x)) \\
&\stackrel{\eqref{Zak_Fourier}}{=} \int_{\rho\backslash X}\int_{\widehat{G}}|\calZ_\rho f(x,\chi)|^2\d\mu_{\widehat{G}}(\chi)\d\mu_{\rho\backslash X}(\pi_\rho(x)).
\end{align*}
Now, by Lemma \ref{lem:Zak_invariance}, the function $x\mapsto \int_{\widehat{G}}|\calZ_\rho^X f(x,\chi)|\d\mu_{\widehat{G}}(\chi)$ is $\rho$-invariant, so with eq. \eqref{integration_fundamental_domain} for the fundamental domain $F$ (note that $G$ is abelian, so unimodular), we get
\begin{align*}
\|f\|_2^2 = \int_{F}\int_{\widehat{G}}|\calZ_\rho^X f(x,\chi)|^2\d\mu_{\widehat{G}}(\chi)\d\mu_F(x) = \int_{F}\int_{\widehat{G}}|\calZ_\rho f(x_0,\chi)|^2\d\mu_{\widehat{G}}(\chi)\d\mu_F(x_0).
\end{align*}
Furthermore, using the fact that $\calZ_\rho f$ vanishes for $\chi \not\in (G_{x_0})_G^\bot$ together with the Weil formula for $G_{x_0}\backslash G$ and the normalization of $\mu_{(G_{x_0})_G^\bot}$, we get
\begin{align*}
\|f\|_2^2 = \int_F\int_{(G_{x_0})_G^\bot}|\calZ_\rho(x_0,\chi)|^2\d\mu_{(G_{x_0})_G^\bot}(\chi)\d\mu_F(x_0) = \|\calZ_\rho f\|_{L^2_\rho(F\times\widehat{G})}^2.
\end{align*}
This shows that the Zak transform is a linear isometry into $L^2_\rho(F\times\widehat{G})$.

Next, we show that the extension is a Hilbert space isomorphism by explicitly writing down the preimage of a function $h \in L^2_\rho(F\times\widehat{G})$. Consider the equivariant extension $h^X$ of $h$ to $X\times\widehat{G}$ that is given by $h^X(x,\chi) = \chi(g)h(x_0,\chi)$ for $x \in X$ and $\chi \in (G_{x_0})_G^\bot$, where $x_0$ and $g$ are given by $\rho_g(x) = x_0$. Note again, that, since $\chi \in (G_{x_0})_G^\bot$, it is constant on stabilizer cosets, so the extension is well-defined.

Then $|h^X(x,\chi)|^2$ is $\rho$-invariant and the above calculation can be carried out backwards. The inverse Plancherel-Fourier transform of $\chi\mapsto h(x_0,\chi)$ yields a function $\widecheck{h}\in L^2(F\times G)$. It needs to be checked, that this function is of the form $(x_0,g)\mapsto f(\rho_g^{-1}(x_0))$ for some $f \in L^2(X)$ and almost all $x_0 \in F$. But this is true, since $h(x_0,\chi)$ is only supported on $(G_{x_0})_G^\bot$, so $\widecheck{h}(x_0,\cdot)$ is $G_{x_0}$-invariant by the Plancherel-theorem for the group $G_{x_0}\backslash G$, using the isomorphism $\widehat{G_{x_0}\backslash G}\cong (G_{x_0})_G^\bot$.

Finally, an application of the Weil formula yields that $\calZ_\rho^{-1}h \in L^2(X)$. That this transform inverts the Zak transform is immediate from eq. \eqref{Zak_Fourier} and Plancherel's theorem.

The intertwining property \eqref{Zak_intertwining} is shown by direct calculation, using the definition \eqref{Zak_abelian} and the translation invariance of the Haar measure.
\end{proof}

Using the Hausdorff-Young inequality for lca groups $\|\widehat{f}\|_q\leq\|f\|_p$ for $1 < p < 2$ and $q = p/(p-1)$ and $f \in L^1\cap L^p(G)$, instead of the Parseval identity, one obtains an $L^p$-Zak transform $\calZ_p :L^p(X,\mu_{\rho\backslash X}^\#)\to L^q(F\times\widehat{G},\mu_F\otimes\mu_{\widehat{G}})$, using the $L^p$-version of Corollary \ref{cor:orbital_mean_L2}.

We next consider Zak's approach to the Zak transform on $\R^d$ in \cite{Zak1967}, where he considers the transform as a decomposition of a function into simultaneous eigenfunctions of the group of translation operators associated to a lattice and the group of modulations associated to the reciprocal lattice. In fact, the objects he considers are not functions, but measures. More precisely, the classic Zak transform decomposes a function into Dirac combs on orbits of the lattice group that are modulated by characters belonging to a fundamental domain of the reciprocal lattice (the so-called first Brillouin zone). The same construction in the general abelian case works as follows.

Let $G$ be an lca group acting strongly proper on the Weil $G$-space $(X,\mu_{\rho\backslash X}^\#)$ via the action $\rho$, and let $F$ be a measurable fundamental domain of this action. Now, consider the locally finite complex Borel measures $\delta_G^{(x_0,\chi)}$ on $F\times\widehat{G}$, $x_0 \in F$, $\chi \in\widehat{G}$, that are defined by
\begin{equation} \label{Zak_measure}
\delta_G^{(x_0,\chi)}(\varphi) := \int_G\rho_g\varphi(x_0)\chi(g)\d\mu_G(g) = \calZ_\rho\varphi(x_0,\overline{\chi}),\ \ \ \varphi \in C_c(X).
\end{equation}
We call these measures Zak measures. The values of the Zak transform of a function $f \in L^2(X)$ can be considered as the coefficients of the expansion into these measures as seen by the following calculation. For $\varphi \in C_c(X)$,
\begin{align*}
\int_{F\times\widehat{G}}&\calZ_\rho f(x_0,\chi)\delta_G^{(x_0,\chi)}(\varphi)\d\mu_{F\times\widehat{G}}(x_0,\chi) = \langle\calZ_\rho f,\calZ_\rho\overline{\varphi}\rangle_{L^2(F\times\widehat{G})} = \langle f,\overline{\varphi}\rangle_{L^2(X)} = f(\varphi),
\end{align*}
where $f$ is interpreted as a functional on $C_c(X)$, namely $\varphi\mapsto f(\varphi) := \int_Xf\cdot\varphi\d\mu_X$. Here, we used the isometry property of the Zak transform, and $\overline{\calZ_\rho\overline{\varphi}(x_0,\chi)} = \calZ_\rho\varphi(x_0,\overline{\chi})$. This a weak inversion theorem in terms of measures. The strong version is Proposition \ref{prop:Zak_inversion}.

This shows that the Zak measures $\delta_G^{(x_0,\chi)}$, $x_0 \in F$, $\chi \in \widehat{G}$, are the analogs of Zak's `functions' $\psi_{kq}$ in \cite[(5)]{Zak1967}. Just as these, they are eigenfunctions of the action of $G$. For $\varphi \in C_c(X)$,
\begin{equation} \label{Zak_measure_eigenfunction_1}
\rho_g\delta_G^{(x_0,\chi)}(\varphi) = \delta_G^{(x_0,\chi)}(\rho_g^{-1}\varphi) = \calZ_\rho[\rho_g^{-1}\varphi](x_0,\overline{\chi}) = \chi(g)\calZ_\rho\varphi(x_0,\overline{\chi}) = \chi(g)\delta_G^{(x_0,\chi)}(\varphi).
\end{equation}
Before generalizing the results of this section to the non-abelian case, we note that an interesting question is, when the space $\int_F^\oplus L^2((G_{x_0})_G^\bot)\d\mu_F(x_0)$ actually is identical to $L^2(F\times\widehat{G})$. This is for example the case when for almost every $x_0 \in F$, the stabilizer $G_{x_0}$ is trivial. Then the vanishing property becomes irrelevant in the inversion formula, and one can invert any function in $L^2(F\times\widehat{G})$ as in the proof of Theorem \ref{thm:Zak_abelian}.

\section{The Zak transform for non-abelian actions} \label{sec:Zak_transform_nonabelian}

To define a Zak transform for non-abelian groups, we can use eq. \eqref{Zak_Fourier} in a setting where a Fourier transform is available. Moreover, to prove the analog of Theorem \ref{thm:Zak_abelian}, we need a Plancherel formula. A reasonable setting is that of second countable unimodular type I groups.

We quickly review the main results of harmonic analysis on second countable unimodular type I groups. For a detailed expositions of the theory, see for example \cite{Folland1995,Dixmier1982,Lipsman1974,Hewitt1970}.

Let $G$ be a locally compact group. The dual $\widehat{G}$ of $G$ is the set of equivalence classes of unitary irreducible representations of $G$.
\begin{equation} \label{dual}
\widehat{G} = \{[\sigma] \,|\, \sigma:G\to \U(\calH_\sigma)\ \mbox{irreducible}\},
\end{equation}
where $[\sigma]$ denotes the equivalence class w.r.t. the relation of unitary equivalence. Two unitary representations $\sigma$ and $\sigma'$ of $G$ are called unitary equivalent, if there is a bounded linear unitary operator $T:\calH_\sigma\to\calH_{\sigma'}$ that intertwines $\sigma$ and $\sigma'$, i.e. that satisfies $\sigma' = T\sigma T^{-1}$. To simplify notation, we will omit the brackets for the equivalence classes throughout the text. The dual is a topological space with the so-called Fell topology (see e.g. \cite[7.2]{Folland1995}).

The dual of a type I group has special properties that are usually formulated in representation theoretic terms. For second countable groups, there is a way to formulate this property equivalently in terms of the toplogy and the measurable structure of $\widehat{G}$. Namely, being type I is in this situation equivalent to $\widehat{G}$ being a $T_0$-space (Kolmogorov space), and to the Borel $\sigma$-algebra on $\widehat{G}$ being standard. The latter means that $\widehat{G}$ is measurably isomorphic to a Borel subset of a complete separable metric space \cite[Thm. 7.6]{Folland1995}. All abelian and compact lcH groups are type I, while a discrete group is type I, if and only it has a normal abelian subgroup of finite index \cite{Thoma1964}. The latter fact will be used in Section \ref{sec:Zak_applications} to apply the Zak transform to discrete subgroups of the Euclidean group $\rmE(3)$ of isometries of $\R^3$.

For a second countable unimodular type I group $G$, the Fourier transform on $L^1(G)$ is defined by
\begin{equation} \label{Fourier_nonabelian}
\widehat{f}(\sigma) := \int_Gf(g)\sigma(g)^*\d\mu_G(g),\ \ \ f \in L^1(G),\ \sigma \in \widehat{G},
\end{equation}
where $\mu_G$ is a left Haar measure on $G$. The single `Fourier coefficients' $\widehat{f}(\sigma)$, $\sigma \in \widehat{G}$, are unitary operators on a Hilbert space $\calH_\sigma$. The Fourier transform defines what is called a measurable field of operators $\{\widehat{f}(\sigma)\}_{\sigma\in \widehat{G}}$ on $\widehat{G}$ (see \cite[7.4]{Folland1995}). When $f \in L^1(G)\cap L^2(G)$, then these operators are Hilbert-Schmidt operators for $\mu_{\widehat{G}}$-almost all $\sigma \in \widehat{G}$, where $\mu_{\widehat{G}}$ is the measure on $\widehat{G}$, s.t. the inverse Fourier transform is given by
\begin{equation} \label{Fourier_inversion_nonabelian}
f(g) = \int_{\widehat{G}}\tr(\widehat{f}(\sigma)\sigma(g))\d\mu_{\widehat{G}}(\sigma),\ \ \ f \in L^1(G)\ \widehat{f} \in L^1(\widehat{G})
\end{equation}
for almost every $g \in G$ (see \cite{Lipsman1974} for more details).

One further ingredient that is needed to transfer the theory to the non-abelian case, is a Poisson summation formula for the quotients by the stabilizers. By properness of the action, it suffices to consider quotients by compact groups.

In the following section, a Poisson summation formula for a general compact subgroup $H$ of a second countable unimodular type I group will be constructed. The following set $H_G^\bot$, which we call reciprocal space of $H$ in $G$, will play a role as that of the reciprocal group in the abelian case.
\begin{equation} \label{reciprocal_space}
H_G^\bot := \{\sigma \in \widehat{G} \,|\, \mult(1,\res^G_H(\sigma)) \geq 1\},
\end{equation}
This can be intuitively understood by noting that at least the present invariant components will lead to some `resonance'.

\subsection{A Poisson summation formula for quotients by compact groups}
\label{subsec:Poisson_compact}

We prove an analog of the Poisson summation formula \eqref{Poisson_summation} for the case that $G$ is a second countable unimodular type I group, and $H$ is a compact closed subgroup. For this purpose, we need to develop some kind of harmonic analysis on the quotient space $G/H$. It will turn out that we will have to leave classic representation theory, since the objects that emerge are not unitary operators on some Hilbert space, but linear operators between different Hilbert spaces. Parts of the reasoning are well-known and related to harmonic analysis on Gelfand pairs (see e.g. \cite{Dieudonne1960}). Many of the notions have also been introduced by Farashahi in \cite{Farashahi2016} from a complementary viewpoint.

We write down the dual object $\widehat{G/H}$ of $G/H$, before developing the theory. The elements are simply the non-trivial orbital means of the unitary irreducible representations of $G$ w.r.t. the right action of $H$.
\begin{equation}
\widehat{G/H} := \{A_R\sigma \,|\, \sigma\in\widehat{G}\}\setminus\{0_{\calH_\sigma} \,|\, \sigma \in \widehat{G}\},
\end{equation}
where $0_{\calH_\sigma}$ denotes the trivial operator that maps all vectors in $\calH_\sigma$ to zero. Note that these operators are well-defined, because $H$ is compact. To compute these orbital means, we decompose the restricted representation $\res^G_H\sigma = \bigoplus_j\sigma_j$, $\sigma_j \in \widehat{H}$, and $\calH_\sigma = \bigoplus_j\calH_j$. Now define the part of $\sigma$ that reduces to the trivial representation $\sigma^{(1)} := \bigoplus_{j,\sigma_j\equiv 1}\id_{\calH_j}$ which is the identity on $\calH_\sigma^{(1)} := \bigoplus_{j,\sigma_j\equiv 1}\calH_j$. Moreover, let $\sigma^\bot$ be the restriction to the non-trivial components collected in $\calH_\sigma^\bot$, s.t.
$$\res^G_H\sigma = \sigma^{(1)}\oplus\sigma^\bot,\ \ \ \calH_\sigma = \calH_\sigma^{(1)}\oplus\calH_\sigma^\bot.$$
Then, for $\sigma \in \widehat{G}$,
\begin{align*}
A_R\sigma(gH) &= \int_H\sigma(gh)\d\mu_H(h) = \sigma(g)\int_H\res^G_H\sigma\d\mu_H = \sigma(g)\int_H\sigma^{(1)}\oplus\sigma^\bot\d\mu_H \\
&= \sigma(g)(A_R\sigma^{(1)}\oplus A_R\sigma^\bot)(eH) = \sigma(g)(\id_{\calH_\sigma^{(1)}}\oplus 0_{\calH_\sigma^\bot}),
\end{align*}
where the last equality results from the Schur orthogonality relations for the irreducible representations of $H$. To simplify notation, we set $P_\sigma^{(1)} := \id_{\calH_\sigma^{(1)}}\oplus 0_{\calH_\sigma^\bot}$, which is the orthogonal projection to $\calH_\sigma^{(1)}$. In particular, we find that $A_R\sigma = 0_{\calH}$ for $\sigma \not\in H_G^\bot$ (see \eqref{reciprocal_space}), because by definition the restrictions of these representations do not contain the trivial representation of $H$. In summary, we found that
\begin{equation} \label{dual_quotient}
\widehat{G/H} = \{\sigma_{G/H} := \sigma\cdot P_\sigma^{(1)} \,|\, \sigma \in H_G^\bot\}.
\end{equation}
In particular, the objects $\sigma_{G/H}$ can be interpreted as linear operators from $\calH_\sigma^{(1)}$ to $\calH_\sigma$, suggesting a more general approach than representation theory, where the structure of a space is represented by operators between Hilbert spaces and not by unitary operators on one Hilbert space.

As Hilbert-Schmidt operators are often identified with tensors in $\calH_\sigma\otimes\calH_{\overline{\sigma}}$ (as antilinear maps from $\calH_\sigma^* = \calH_{\overline{\sigma}}$ to $\calH_\sigma$), the operators $\sigma_{G/H} \in \widehat{G/H}$ can be considered as tensors in $\calH_\sigma^{(1)}\otimes\calH_{\overline{\sigma}}$. Consequently, $\sigma_{G/H}^* = (P_\sigma^{(1)})^*\sigma^*$, $\sigma \in \widehat{G}$, can be considered as a tensor in $\calH_\sigma\otimes\calH_{\overline{\sigma}}^{(1)}$.

Next, we define the Fourier transform of a function $f \in C_c(G/H)$. For this purpose, we let $f_G(g) := f(gH)$, $g \in G$, be the invariant extension of $f$ to $G$, and calculate its $G$-Fourier transform, using the classic Weil formula. For $\sigma \in \widehat{G}$,
\begin{align*}
\widehat{f_G}(\sigma) &= \int_Gf_G\cdot\sigma^*\d\mu_G \stackrel{\rm Weil}{=} \int_{G/H}\int_H f_G(gh)\sigma(gh)^*\d\mu_H(h)\d\mu_{G/H}(gH) \\
&= \int_{G/H}f(gH)A_R\sigma^*(gH)\d\mu_{G/H}(gH) = \int_{G/H}f\cdot\sigma_{G/H}^*\d\mu_{G/H}.
\end{align*}
This allows to define the Fourier transform on $L^1(G/H)$ as
\begin{equation} \label{Fourier_quotient}
\widehat{f}(\sigma_{G/H}) := \int_{G/H}f\sigma_{G/H}^*\d\mu_{G/H},\ \ \ f \in L^1(G/H),\ \sigma_{G/H} \in \widehat{G/H}.
\end{equation}
In particular, this implies that $\supp(\widehat{f_G})\subseteq H_G^\bot$ for $f \in L^1(G/H)$, s.t. we can define $\mu_{H_G^\bot} := \mu_{\widehat{G}}|_{H_G^\bot}$. Moreover, consider the map $\varphi:H_G^\bot\to\widehat{G/H}$, $\sigma\mapsto \sigma_{G/H}$, which is obviously surjective. It is also injective, since for $\sigma,\sigma' \in H_G^\bot$ of same dimension,
$$\varphi(\sigma) = \varphi(\sigma') \Leftrightarrow \sigma(g)\cdot P_\sigma^{(1)} = \sigma'(g)\cdot P_{\sigma'}^{(1)} \Leftrightarrow (\sigma'^{-1}\sigma)(g)\cdot P_\sigma^{(1)} = P_{\sigma'}^{(1)}.$$
Since the spaces $\calH_\sigma^{(1)}$ and $\calH_{\sigma'}^{(1)}$ are non-trivial, this implies that there is at least a one-dimensional invariant subspace of $\sigma'^{-1}\sigma$, which implies $\sigma' = \sigma$ by irreducibility. Thus $\varphi$ is bijective and we can endow $\widehat{G/H}$ with the final topology w.r.t. $\varphi$. Moreover, we define a measure on $\widehat{G/H}$ by $\mu_{\widehat{G/H}} := \varphi_*\mu_{H_G^\bot}$.

This allows us to prove the following inversion theorem for the Fourier transform on $L^1(G/H)$.

\begin{proposition}[$L^1$-Fourier inversion for quotients by compact groups] \label{prop:Fourier_inversion_quotient}
Let $G$ be a second countable unimodular type I group, $H$ a compact closed subgroup of $G$, and $f \in L^1(G/H)$ with $\widehat{f} \in L^1(G/H)$. Then for almost every $gH \in G/H$,
\begin{equation} \label{Fourier_inversion_quotient}
f(gH) = \int_{\widehat{G/H}}\tr\left(\widehat{f}(\sigma_{G/H})\sigma_{G/H}(gH)\right)\d\mu_{\widehat{G/H}}(\sigma_{G/H}).
\end{equation}
In particular, when $f$ is in addition continuous, then eq. \eqref{Fourier_inversion_quotient} is true for every $gH \in G/H$.
\end{proposition}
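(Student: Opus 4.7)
The plan is to reduce the statement to the classical Fourier inversion formula \eqref{Fourier_inversion_nonabelian} on $G$ by lifting $f$ to the $H$-invariant function $f_G(g) := f(gH)$ on $G$ and then converting the resulting integral over $\widehat{G}$ into an integral over $\widehat{G/H}$ via the measure identifications $\mu_{H_G^\bot} = \mu_{\widehat{G}}|_{H_G^\bot}$ and $\mu_{\widehat{G/H}} = \varphi_*\mu_{H_G^\bot}$ that were set up just before the statement.

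First I would verify that $f_G \in L^1(G)$: because $H$ is compact with $\mu_H(H)$ finite, the Weil formula \eqref{Weil_formula_G/H} gives $\|f_G\|_{L^1(G)} = \mu_H(H)\|f\|_{L^1(G/H)} < \infty$. The operator-valued identity already computed in the paragraph above the proposition, namely $\widehat{f_G}(\sigma) = \widehat{f}(\sigma_{G/H})$ for $\sigma \in \widehat{G}$, shows in particular that $\widehat{f_G}$ is supported on $H_G^\bot$ (since $\sigma_{G/H} = 0$ outside $H_G^\bot$). Combining this with the bijectivity of $\varphi:H_G^\bot\to \widehat{G/H}$ and $\mu_{\widehat{G/H}}=\varphi_*\mu_{H_G^\bot}$, the hypothesis $\widehat{f}\in L^1(\widehat{G/H})$ transfers to $\widehat{f_G}\in L^1(\widehat{G})$. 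Hence the inversion formula \eqref{Fourier_inversion_nonabelian} on $G$ applies pointwise $\mu_G$-a.e. and yields
\begin{equation*}
f(gH)=f_G(g)=\int_{\widehat{G}}\tr\bigl(\widehat{f}(\sigma_{G/H})\sigma(g)\bigr)\,\d\mu_{\widehat{G}}(\sigma)=\int_{H_G^\bot}\tr\bigl(\widehat{f}(\sigma_{G/H})\sigma(g)\bigr)\,\d\mu_{H_G^\bot}(\sigma).
\end{equation*}

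The decisive step is to replace $\sigma(g)$ by $\sigma_{G/H}(gH)=\sigma(g)P_\sigma^{(1)}$ under the trace. Reading the formula $\widehat{f}(\sigma_{G/H})=\int_{G/H}f(g'H)P_\sigma^{(1)}\sigma(g')^*\,\d\mu_{G/H}(g'H)$, one sees that the range of $\widehat{f}(\sigma_{G/H})$ is contained in $\calH_\sigma^{(1)}$, so $\widehat{f}(\sigma_{G/H})=P_\sigma^{(1)}\widehat{f}(\sigma_{G/H})$. The cyclic property of the trace then gives
\begin{equation*}
\tr\bigl(\widehat{f}(\sigma_{G/H})\sigma(g)\bigr)=\tr\bigl(P_\sigma^{(1)}\widehat{f}(\sigma_{G/H})\sigma(g)\bigr)=\tr\bigl(\widehat{f}(\sigma_{G/H})\sigma(g)P_\sigma^{(1)}\bigr)=\tr\bigl(\widehat{f}(\sigma_{G/H})\sigma_{G/H}(gH)\bigr).
\end{equation*}
Pushing forward the integral via $\varphi$ converts the integration domain from $H_G^\bot$ to $\widehat{G/H}$ and yields the claimed identity a.e. For the pointwise statement in the continuous case, both sides are continuous functions of $gH$ (the right-hand side by dominated convergence, using $\widehat{f}\in L^1(\widehat{G/H})$ and strong continuity of $gH\mapsto \sigma_{G/H}(gH)$ together with the uniform bound $\|\sigma_{G/H}(gH)\|\le 1$), so a.e.\ agreement forces everywhere agreement.

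I expect the only real subtlety to be the trace identity $\tr(\widehat{f}(\sigma_{G/H})\sigma(g))=\tr(\widehat{f}(\sigma_{G/H})\sigma_{G/H}(gH))$: one must be careful that the operator-valued integrals make sense in the Hilbert--Schmidt sense so that the cyclic property of the trace is applicable. Everything else is a bookkeeping exercise in the Weil formula and the change of variables $\sigma\leftrightarrow\sigma_{G/H}$.
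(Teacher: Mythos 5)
Your proposal is correct and follows essentially the same route as the paper's proof: lift $f$ to the $H$-invariant function $f_G$, apply the inversion formula \eqref{Fourier_inversion_nonabelian} on $G$, use the support property $\supp(\widehat{f_G})\subseteq H_G^\bot$, and push forward via $\varphi$. You merely supply details the paper leaves implicit (the $L^1$ transfer, the trace identity via $\widehat{f}(\sigma_{G/H})=P_\sigma^{(1)}\widehat{f}(\sigma_{G/H})$, and a continuity argument for the pointwise case).
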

\begin{proof}
Again, we can transfer the calculation to $L^1(G)$ and use the Fourier inversion formula \eqref{Fourier_inversion} to get the result. Let $f_G(g) := f(gH)$, $g \in G$, then for almost every $gH \in G/H$,
\begin{align*}
f(gH) &= f_G(g) = \int_{\widehat{G}}\tr(\widehat{f_G}(\sigma)\sigma(g))\d\mu_{\widehat{G}}(\sigma) = \int_{H_G^\bot}\tr(\widehat{f}(\sigma_{G/H})\sigma(g))\d\mu_{H_G^\bot}(\sigma) \\
&= \int_{\widehat{G/H}}\tr(\widehat{f}(\sigma_{G/H})\sigma_{G/H}(gH))\d\mu_{\widehat{G/H}}(\sigma_{G/H}).
\end{align*}
When $f$ is continuous, then $f_G$ is continuous, so the point-wise inversion formula follows from the inversion theorem on $G$.
\end{proof}

Note, that the inversion formula \eqref{Fourier_inversion_quotient} implies a Gelfand-Raikov theorem for $\widehat{G/H}$. The elements separate any two points, as shown by the inversion formula for a function in $C_c(G/H)$ that does not agree on those two points (which exists by the Urysohn lemma, since $G/H$ is paracompact and thus normal).

Another consequence is the following analog of the Poisson summation formula.

\begin{proposition}[Poisson summation formula for compact quotients] \label{prop:Poisson_quotient}
Let $G$ be a second countable unimodular type I group, $H$ a compact closed subgroup of $G$, and $f \in C_c(G)$ with $f|_H \in L^1(H)$ and $\widehat{f} \in L^1(\widehat{G})$. Then
\begin{equation} \label{Poisson_quotient}
\int_H f(h)\d\mu_H(h) = \int_{H_G^\bot}\tr(P_\sigma^{(1)}\cdot\widehat{f}(\sigma))\d\mu_{H_G^\bot}(\sigma).
\end{equation}
\end{proposition}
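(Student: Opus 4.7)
The plan is to apply the Fourier inversion formula on $G/H$ (Proposition \ref{prop:Fourier_inversion_quotient}) to the right $H$-orbital mean of $f$, evaluated at the identity coset $eH$. Define
$$F(gH) := \int_H f(gh)\d\mu_H(h) = A_R f(gH).$$
Since $f \in C_c(G)$ and $H$ is compact, $F$ lies in $C_c(G/H)$, and the evaluation at the identity coset is $F(eH) = \int_H f(h)\d\mu_H(h)$, which is exactly the left hand side of \eqref{Poisson_quotient}. The task is therefore to rewrite the $G/H$-inversion of $F$ at $eH$ in terms of $\widehat{f}$.

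The key algebraic observation is that for $\sigma \in H_G^\bot$, the representation $\sigma|_H$ acts as the identity on $\calH_\sigma^{(1)}$, so $P_\sigma^{(1)}\sigma(h)^* = P_\sigma^{(1)}$ for every $h\in H$. Using the definition \eqref{Fourier_quotient}, the identity $\sigma_{G/H}(gH)^* = P_\sigma^{(1)}\sigma(g)^*$, and the classical Weil formula for $G/H$, I would compute
\begin{align*}
\widehat{F}(\sigma_{G/H})
&= \int_{G/H}\!\int_H f(gh)\,P_\sigma^{(1)}\sigma(g)^*\d\mu_H(h)\d\mu_{G/H}(gH) \\
&= \int_{G/H}\!\int_H f(gh)\,P_\sigma^{(1)}\sigma(gh)^*\d\mu_H(h)\d\mu_{G/H}(gH) \\
&= P_\sigma^{(1)}\int_G f(g)\sigma(g)^*\d\mu_G(g) \;=\; P_\sigma^{(1)}\widehat{f}(\sigma).
\end{align*}
Since orthogonal projections are contractive for the trace norm, the hypothesis $\widehat{f}\in L^1(\widehat{G})$, together with the pushforward definition $\mu_{\widehat{G/H}}=\varphi_*\mu_{H_G^\bot}$, gives $\widehat{F}\in L^1(\widehat{G/H})$.

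Continuity of $F$ allows a pointwise evaluation of Proposition \ref{prop:Fourier_inversion_quotient} at $eH$. Noting that $\sigma_{G/H}(eH)=\sigma(e)P_\sigma^{(1)}=P_\sigma^{(1)}$, using idempotency of $P_\sigma^{(1)}$, and invoking cyclicity of the trace, the inversion formula collapses to
\begin{align*}
\int_H f(h)\d\mu_H(h) \;=\; F(eH)
&= \int_{H_G^\bot}\tr\!\bigl(P_\sigma^{(1)}\widehat{f}(\sigma)\,P_\sigma^{(1)}\bigr)\d\mu_{H_G^\bot}(\sigma) \\
&= \int_{H_G^\bot}\tr\!\bigl(P_\sigma^{(1)}\widehat{f}(\sigma)\bigr)\d\mu_{H_G^\bot}(\sigma),
\end{align*}
which is \eqref{Poisson_quotient}. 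The main subtlety is the absorption identity $P_\sigma^{(1)}\sigma(h)^*=P_\sigma^{(1)}$: it guarantees that $\sigma_{G/H}(gH)^*$ is a genuine function on $G/H$ and, more importantly, it converts the inner $H$-integral into a full $G$-integral via the Weil formula, so that the $G$-Fourier transform of $f$ resurfaces on the right hand side.
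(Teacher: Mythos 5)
Your proof is correct and follows essentially the same route as the paper: write the left-hand side as $A_Rf(eH)$, apply the Fourier inversion formula on $G/H$ at the identity coset, and identify the Fourier coefficient of the orbital mean as $P_\sigma^{(1)}\widehat{f}(\sigma)$. The only cosmetic difference is that you compute that coefficient directly on $G/H$ via the Weil formula and the absorption identity $P_\sigma^{(1)}\sigma(h)^*=P_\sigma^{(1)}$, whereas the paper lifts to the invariant extension on $G$ and uses unimodularity; your explicit checks of $\widehat{F}\in L^1(\widehat{G/H})$ and of continuity for pointwise evaluation are welcome details the paper leaves implicit.
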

\begin{proof}
We write the integral over $H$ as the evaluation of the orbital mean at the neutral element and apply the Fourier inversion formula \eqref{Fourier_inversion_quotient}.
\begin{align*}
\int_H f(h)\d\mu_H(h) &= A_R f(eH) = \int_{\widehat{G/H}}\tr(\widehat{A_R f}(\sigma_{G/H})\sigma_{G/H}(gH))\d\mu_{G/H}(gH) \\
&= \int_{H_G^\bot}\tr(\widehat{(A_R f)_G}(\sigma)\sigma(e))\d\mu_{H_G^\bot}(\sigma) = \int_{H_G^\bot}\tr(\widehat{(A_R f)_G}(\sigma))\d\mu_{H_G^\bot}(\sigma).
\end{align*}
We calculate the function in the trace.
\begin{align*}
\widehat{(A_R f)}_G(\sigma) &= \int_G(A_R f)_G(g)\sigma(g)^*\d\mu_G(g) = \int_G\int_H f(gh)\d\mu_H(h)\sigma(g)^*\d\mu_G(g) \\
&\stackrel{G\ {\rm unimod.}}{=} \int_G f(g)\int_H\sigma(gh^{-1})^*\d\mu_H(h)\d\mu_G(g) = P_\sigma^{(1)}\cdot\widehat{f}(\sigma),
\end{align*}
yielding eq. \eqref{Poisson_quotient}.
\end{proof}
The exact same arguments can be used to prove the analog results for the left quotient $H\backslash G$ with the reciprocal space
\begin{equation} \label{reciprocal_left}
\widehat{H\backslash G} = \{\sigma_{H\backslash G} := P_\sigma^{(1)}\cdot\sigma \,|\, \sigma\in H_G^\bot\}.
\end{equation}
The elements $\sigma_{H\backslash G}$ can also be considered as tensors, namely as elements of $\calH_\sigma\otimes\calH_{\overline{\sigma}}^{(1)}$. The resulting Poisson summation formula is identical to the version for right quotients. It will be used in Theorem \ref{thm:Zak_nonabelian} for the left quotient of $G$ by a stabilizer of the action.

Lipsman \cite{Lipsman1974} proved a Poisson summation formula for quotients by normal subgroups, s.t. the quotient group is unimodular. The special case that the group is in addition compact is contained in Proposition \ref{prop:Poisson_quotient}.

The above approach suggest an approach to harmonic analysis on quotient spaces in more generality. This, however, goes far beyond the purpose of this article and will be treated elsewhere.

Finally, we formulate the corollary that will allow to prove the analog of Theorem \ref{thm:Zak_abelian}.

\begin{corollary}[Plancherel theorem for $H$-invariant functions] \label{cor:Plancherel_quotient}
Let $G$ be a second countable unimodular type I group, and $H$ a compact subgroup. The Plancherel-transform is an isometry from the space of right $H$-invariant functions onto the space $\int_{\widehat{G/H}}^\oplus \calH_\sigma\otimes\calH_{\overline{\sigma}}^{(1)}\d\mu_{\widehat{G/H}}$, and an isometry from the space of left $H$-invariant functions onto the space $\int_{\widehat{H\backslash G}}^\oplus \calH_\sigma^{(1)}\otimes\calH_{\overline{\sigma}}\d\mu_{\widehat{H\backslash G}}$.
\end{corollary}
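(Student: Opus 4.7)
The plan is to derive both assertions by restricting the full Plancherel isomorphism on $G$ to the closed subspace of $H$-invariant $L^2$-functions, characterising the fiber at each $\sigma \in \widehat{G}$, and then transporting the resulting direct integral to one over $\widehat{G/H}$ (respectively $\widehat{H\backslash G}$) via the parameterisation $\varphi$ constructed in Section~\ref{subsec:Poisson_compact}.

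For the right-invariant case, take $f \in L^2(G)$ satisfying $R_h f = f$ in $L^2$ for every $h \in H$. The substitution $g \mapsto g h$ in \eqref{Fourier_nonabelian}, combined with unimodularity of $G$ and unitarity of $\sigma$, yields $\widehat{f}(\sigma) = \sigma(h)^*\widehat{f}(\sigma)$ for every $h \in H$. Averaging over $h$ against the normalised Haar measure $\mu_H$ and using $\int_H \sigma(h)^*\,\d\mu_H(h) = P_\sigma^{(1)}$ gives $\widehat{f}(\sigma) = P_\sigma^{(1)}\widehat{f}(\sigma)$, so the Hilbert--Schmidt operator $\widehat{f}(\sigma)$ has image contained in $\calH_\sigma^{(1)}$; in the tensor conventions fixed in Section~\ref{subsec:Poisson_compact} the corresponding fiber of $B_2(\calH_\sigma)$ is exactly $\calH_\sigma \otimes \calH_{\overline{\sigma}}^{(1)}$, which is trivial for $\sigma \notin H_G^\bot$. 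The converse implication --- that data of this form arise from an $H$-invariant $f$ --- follows by computing $\widehat{R_h f}(\sigma) = \sigma(h)\widehat{f}(\sigma)$ and observing that $\sigma(h)$ acts trivially on the image of $P_\sigma^{(1)}\widehat{f}(\sigma)$, so $\widehat{f - R_h f} = 0$ and Plancherel forces $R_h f = f$.

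Combining this characterisation with the Plancherel isomorphism $L^2(G) \cong \int^\oplus_{\widehat{G}} B_2(\calH_\sigma)\,\d\mu_{\widehat{G}}(\sigma)$ realises the space of right $H$-invariant $L^2$-functions as $\int^\oplus_{H_G^\bot} \calH_\sigma \otimes \calH_{\overline{\sigma}}^{(1)}\,\d\mu_{H_G^\bot}(\sigma)$; pushing forward along $\varphi\colon H_G^\bot \to \widehat{G/H}$ together with $\mu_{\widehat{G/H}} = \varphi_*\mu_{H_G^\bot}$ produces the target direct integral stated in the corollary. The left-invariant case is completely symmetric: $f(hg) = f(g)$ leads to $\widehat{f}(\sigma) = \widehat{f}(\sigma)\sigma(h)^*$ and hence $\widehat{f}(\sigma) = \widehat{f}(\sigma)P_\sigma^{(1)}$, so $\widehat{f}(\sigma)$ vanishes on $(\calH_\sigma^{(1)})^\perp$ and lies in $\calH_\sigma^{(1)} \otimes \calH_{\overline{\sigma}}$; the analogous transport along $\widehat{H\backslash G}$ finishes the argument. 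The main obstacle I anticipate is the bookkeeping for the measurable field structure: one needs to verify that $\sigma \mapsto P_\sigma^{(1)}$ is a measurable field of projections so that $\int^\oplus_{\widehat{G}} P_\sigma^{(1)} B_2(\calH_\sigma)\,\d\mu_{\widehat{G}}(\sigma)$ is a well-defined Hilbert-space direct integral subspace, and that $\varphi$ transports this measurable structure compatibly; this reduces to the standard fact that $\sigma \mapsto \res^G_H\sigma$ depends measurably on $\sigma$, followed by averaging against the compact Haar measure $\mu_H$.
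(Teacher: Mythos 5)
Your proof is correct, and it reaches the same fiber description $\widehat f(\sigma)=P_\sigma^{(1)}\widehat f(\sigma)$ (resp. $\widehat f(\sigma)=\widehat f(\sigma)P_\sigma^{(1)}$) as the paper, but the two key steps are executed along a different route. The paper identifies the right $H$-invariant functions with $L^2(G/H)$ and reads off the fiber condition from the quotient Fourier transform \eqref{Fourier_quotient}, i.e.\ from the Weil-formula computation $\widehat{f_G}(\sigma)=\int_{G/H}f\,\sigma_{G/H}^*\,\d\mu_{G/H}$ with $\sigma_{G/H}^*=P_\sigma^{(1)}\sigma^*$; you instead obtain it intrinsically from the intertwining relation $\widehat{R_hf}(\sigma)=\sigma(h)\widehat f(\sigma)$ followed by averaging $\int_H\sigma(h)^*\d\mu_H(h)=P_\sigma^{(1)}$ — the same computation in different clothing. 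The more substantive divergence is surjectivity: the paper embeds the candidate image into $\int_{\widehat G}^\oplus\calH_\sigma\otimes\calH_{\overline\sigma}\,\d\mu_{\widehat G}$ by zero extension, applies the inverse Plancherel transform, and invokes the $L^1$-inversion formula \eqref{Fourier_inversion_quotient} (which strictly speaking needs a density argument to cover general $L^2$ data), whereas you argue that any Plancherel preimage of a field satisfying $A_\sigma=P_\sigma^{(1)}A_\sigma$ is automatically right $H$-invariant because $\sigma(h)$ acts trivially on $\calH_\sigma^{(1)}$, so $\widehat{R_hf-f}=0$. Your surjectivity argument is cleaner and avoids the integrability hypotheses of the inversion formula; the paper's route has the side benefit of building the harmonic analysis on $G/H$ (the dual $\widehat{G/H}$, the measure $\mu_{\widehat{G/H}}$) that it reuses elsewhere. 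Two small points to tidy up: when averaging the identity $\widehat f(\sigma)=\sigma(h)^*\widehat f(\sigma)$ over $h$, first work with $f\in L^1\cap L^2$ so the identity holds for every $\sigma$ (otherwise the exceptional null set in $\widehat G$ may depend on $h$), then extend by density; and your concluding remark on the measurability of $\sigma\mapsto P_\sigma^{(1)}$ is indeed the right thing to check — it follows, as you say, from measurability of $\sigma\mapsto\sigma(h)$ and averaging over the compact group $H$ — and is a point the paper itself leaves implicit.
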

\begin{proof}
By compactness of $H$, the space of right $H$-invariant functions is a closed subspace of $L^2(G)$. In addition, it is isomorphic to $L^2(G/H)$ via $f\mapsto f\circ\pi_R$. The extension of the Fourier transform from $L^1(G/H)\cap L^2(G/H)$ to $L^2(G/H)$ is thus an isometry, because of Plancherel's theorem for $L^2(G)$. It maps into $\int_{\widehat{G/H}}^\oplus \calH_\sigma\otimes\calH_{\overline{\sigma}}^{(1)}\d\mu_{\widehat{G/H}}$, because of eq. \eqref{Fourier_quotient} and the identification of the operators with the respective tensors. The special form of the tensor spaces reflects the vanishing of the `columns' of the Fourier coefficients resulting from the Schur orthogonality relations.

Surjectivity is shown by embedding this image space into $\int_{\widehat{G}}^\oplus \calH_\sigma\otimes\calH_{\overline{\sigma}}\d\mu_{\widehat{G}}$ (by extension with zero columns), performing the inverse Plancherel transform and using eq. \eqref{Fourier_inversion_quotient}.

The analog argument yields the result for the space of left $H$-invariant functions.
\end{proof}

\subsection{The non-abelian Zak transform on $L^1(X)$}
\label{subsec:Zak_nonabelian_L1}

Now, we are in place to define the Zak transform on second countable unimodular type I groups and then prove the analog of Lemma \ref{lem:Zak_invariance}.

\begin{definition}[Zak transform for non-abelian actions] \label{def:Zak_nonabelian}
Let $G$ be a second countable unimodular type I group, $(X,\mu_{\rho\backslash X}^\#)$ a Weil $G$-space, and $f \in L^1(X,\mu_{\rho\backslash X}^\#)$. Furthermore, let $F$ be a $\mu_X$-measurable fundamental domain of the action $\rho$. The Zak transform $\calZ_\rho f$ of a function $f \in L^1(X,\mu_{\rho\backslash X}^\#)$ is defined as
\begin{equation} \label{Zak_nonabelian}
\calZ_\rho f(x_0,\sigma) := \int_G\rho_gf(x_0)\sigma(g)^*\d\mu_G(g),\ \ \ x_0 \in F,\ \sigma \in \widehat{G}.
\end{equation}
Again, the extended Zak transform $\calZ_\rho^X$ on $X\times\widehat{G}$ is given by eq. \eqref{Zak_nonabelian} with $x_0 \in X$.
\end{definition}

Note that Definition \ref{def:Zak_nonabelian} is a natural extension of Definition \ref{def:Zak_abelian} in the abelian case; the only change is that the characters $\chi$ are replaced by the, possibly higher-dimensional, irreducible unitary representations $\sigma$ of the group.

The Zak transform is a measurable field of operators on $F\times\widehat{G}$. In analogy to Lemma \ref{lem:Zak_invariance}, we can prove an equivariance property.

\begin{lemma}[Equivariance of the non-abelian extended Zak transform] \label{lem:Zak_invariance_nonabelian}
With the assumptions of Definition \ref{def:Zak_nonabelian}, the extended Zak transform satisfies
\begin{equation} \label{Zak_invariance_nonabelian}
\calZ_\rho^X f(\rho_g^{-1}(x_0),\sigma) = \calZ_\rho f(x_0,\sigma)\sigma(g),\ \ \ g \in G,\ x_0 \in F,\ \sigma \in \widehat{G}.
\end{equation}
In particular, $\calZ_\rho f(x_0,\sigma)$ vanishes on the set $\{(x_0,\sigma) \in F\times\widehat{G} \,|\, \sigma \not\in (G_{x_0})_G^\bot\}$.
\end{lemma}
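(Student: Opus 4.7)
The plan is to mimic the proof of Lemma \ref{lem:Zak_invariance} from the abelian case by a direct calculation, and then to derive the vanishing statement by averaging the resulting identity over the (compact) stabilizer $G_{x_0}$.

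First I would unfold the definition \eqref{Zak_nonabelian} applied at the point $\rho_g^{-1}(x_0)$: since $\rho$ is an action, $\rho_{g'}^{-1}\rho_g^{-1}(x_0) = \rho_{(gg')^{-1}}(x_0)$, so the integrand rewrites as $(\rho_{gg'}f)(x_0)\sigma(g')^{*}$. I would then change variables $h = gg'$, invoking left-invariance of $\mu_G$ (unimodularity is assumed but not needed for this step). Splitting $\sigma(g^{-1}h)^{*} = \sigma(h)^{*}\sigma(g^{-1})^{*}$ via the homomorphism property and pulling the operator $\sigma(g^{-1})^{*}$ outside the integral yields
\[
\calZ_\rho^X f(\rho_g^{-1}(x_0),\sigma) = \calZ_\rho^X f(x_0,\sigma)\,\sigma(g^{-1})^{*}.
\]
Because $\sigma$ is unitary, $\sigma(g^{-1})^{*} = \sigma(g)$, and since $x_0 \in F$ we may replace $\calZ_\rho^X$ by $\calZ_\rho$ on the right, which is \eqref{Zak_invariance_nonabelian}.

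For the vanishing assertion, I would specialise \eqref{Zak_invariance_nonabelian} to $g \in G_{x_0}$. Then $\rho_g^{-1}(x_0) = x_0$ and the identity collapses to $\calZ_\rho f(x_0,\sigma) = \calZ_\rho f(x_0,\sigma)\sigma(g)$ for every $g \in G_{x_0}$. The Cartan condition in Definition \ref{def:proper_action} guarantees that $G_{x_0}$ is compact, so $\mu_{G_{x_0}}$ can be normalised with $\mu_{G_{x_0}}(G_{x_0}) = 1$; averaging both sides over $G_{x_0}$ gives
\[
\calZ_\rho f(x_0,\sigma) = \calZ_\rho f(x_0,\sigma)\int_{G_{x_0}}\sigma(g)\,\d\mu_{G_{x_0}}(g) = \calZ_\rho f(x_0,\sigma)\,P_\sigma^{(1)},
\]
where $P_\sigma^{(1)}$ is exactly the orthogonal projection onto the $G_{x_0}$-invariant subspace $\calH_\sigma^{(1)}$ identified in Section \ref{subsec:Poisson_compact}. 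By the definition \eqref{reciprocal_space} of the reciprocal space, $\sigma \notin (G_{x_0})_G^\bot$ is equivalent to $\calH_\sigma^{(1)} = \{0\}$, i.e.\ $P_\sigma^{(1)} = 0$, which forces $\calZ_\rho f(x_0,\sigma) = 0$.

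I do not foresee any substantive obstacle. The entire proof is a formal manipulation of the definition, with the only structurally new feature being that the scalar character $\chi(g)$ of the abelian setting is replaced by a (generally non-commuting) unitary operator $\sigma(g)$; this is exactly why the factor appears multiplied on the right of $\calZ_\rho f(x_0,\sigma)$ rather than as a pre-factor, and it is also what makes the projection $P_\sigma^{(1)}$—rather than a scalar phase—govern the vanishing locus.
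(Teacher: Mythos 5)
Your proof is correct and follows essentially the same route as the paper: the identity \eqref{Zak_invariance_nonabelian} by a change of variables using left-invariance of $\mu_G$, and the vanishing statement by specialising to $g \in G_{x_0}$ and exploiting that $\res^G_{G_{x_0}}\sigma$ has no trivial component when $\sigma \notin (G_{x_0})_G^\bot$. Your only (harmless) variation is to conclude by averaging over the compact stabilizer to produce the projection $P_\sigma^{(1)}$, which makes precise the paper's more terse appeal to the non-triviality of each irreducible component.
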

\begin{proof} Equation \eqref{Zak_invariance_nonabelian} follows immediately from the definition \eqref{Zak_nonabelian}, using translation invariance of $\mu_G$.

Now, for $x_0 \in F$, let $\sigma$ be not contained in $(G_{x_0})_G^\bot$. Every $G_{x_0}$-irreducible component of $\res^G_H(\sigma)$ is then non-trivial. For $f \in L^1(G)$ and $G \in G_{x_0}$, using the equivariance property,
$$\calZ_\rho f(x_0,\sigma) = \calZ_\rho f(\rho_g^{-1}(x_0),\sigma) = \calZ_\rho f(x_0,\sigma)\sigma(g),$$
showing that $\calZ_\rho f(x_0,\sigma)$ vanishes on every $G_{x_0}$-invariant subspace by non-triviality of the respective component of $\res^G_H(\sigma)$. Since all components of the representation are non-trivial, $\calZ_\rho(x_0,\sigma) = 0$.
\end{proof}

As in the abelian case, the equivariance property \eqref{Zak_invariance_nonabelian} can be used to interpret the extended Zak transform as a map into the space of measurable fields of operators on $\widehat{G}$ that satisfy this property, without the necessity to specify a fundamental domain.

First, we consider the analogs of the interpretations \eqref{Zak_Fourier}, \eqref{Zak_Wigner}, and \eqref{Zak_Heisenberg}. The Fourier viewpoint is still valid. We have
\begin{equation} \label{Zak_Fourier_nonabelian}
\calZ_\rho f(x_0,\sigma) = \widehat{f_{x_0}}(\sigma),\ \ \ x_0 \in F,\ \sigma \in \widehat{G}.
\end{equation}
The same is true for the symmetry-projection viewpoint of Wigner. The operator $P_\rho^\sigma$ that is given by
\begin{equation} \label{Zak_Wigner_nonabelian}
\calZ^X_\rho f(x_0,\sigma) = P_\rho^\sigma f(x),\ \ \ x \in X,\ \sigma \in \widehat{G}
\end{equation}
maps $f$ into the space of $\sigma$-equivariant measurable fields of operators on $X$ (see Corollary \ref{cor:Bloch_Floquet}). However, we cannot formulate a version of the viewpoint \eqref{Zak_Heisenberg}, since the phase space $G\times\widehat{G}$ is not a group.

In analogy to Proposition \ref{prop:Zak_inversion}, one has the following inversion theorem.

\begin{proposition}[Non-abelian $L^1$-Zak inversion theorem] \label{prop:Zak_inversion_nonabelian}
Let $G$ be a second countable unimodular type I group that acts strongly proper on a Weil space $(X,\mu_{\rho\backslash X}^\#)$, and $F$ a measurable fundamental domain of the action $\rho$. The inverse Zak transform $\calZ_\rho^{-1}$ is defined as
\begin{equation} \label{Zak_inversion_nonabelian}
\calZ_\rho^{-1} h(x) := \int_F\int_{(G_{x_0})_G^\bot}\tr\left(h(x_0,\sigma)\int_{G_{x_0}\backslash G}\sigma(g)\d\delta_x^{(x_0)}(\pi_L(g))\right)\d\mu_{(G_{x_0})_G^\bot}(\sigma)\d\delta_x(x_0).
\end{equation}
for $h(x_0,\cdot) \in L^1(\widehat{G})$ for almost all $x_0 \in F$. It satisfies $f = \calZ_\rho^{-1}\calZ_\rho f$ point-wise almost everywhere for $f \in L^1(G)$ with $\calZ_\rho f(x_0,\cdot)\in L^1(\widehat{G})$ for almost all $x_0 \in F$.
\end{proposition}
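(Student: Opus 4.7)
The plan is to follow the abelian template of Proposition \ref{prop:Zak_inversion} step by step, replacing the classical Fourier inversion by its compact-quotient version from Proposition \ref{prop:Fourier_inversion_quotient}. The argument splits into unpacking the point measures $\delta_x, \delta_x^{(x_0)}$, rewriting the Zak transform as a quotient Fourier transform on $G_{x_0}\backslash G$, and a short trace-cyclicity argument at the end.

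First I would unpack the point measures. Since $F$ is a fundamental domain, the orbit of $x$ meets $F$ in a unique point $x_0$, so $\int_F (\cdot)\, d\delta_x$ evaluates at this $x_0$. For this $x_0$, the set $\{g\in G : \rho_g(x) = x_0\}$ is a single left coset $G_{x_0} g_0 \in G_{x_0}\backslash G$, and $\delta_x^{(x_0)}$ evaluates at this coset. Thus \eqref{Zak_inversion_nonabelian} simplifies to
\[
\calZ_\rho^{-1} h(x) = \int_{(G_{x_0})_G^\bot} \tr\!\bigl(h(x_0,\sigma)\, \sigma(g_0)\bigr)\, d\mu_{(G_{x_0})_G^\bot}(\sigma).
\]

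Next I would work with the orbital function $f_{x_0}(g) := \rho_g f(x_0)$. By Corollary \ref{cor:orbital_mean_L2} it lies in $L^1(G)$ for almost every $x_0 \in F$, and the identity $\rho_{g_x}^{-1}(x_0) = x_0$ for $g_x \in G_{x_0}$ shows that $f_{x_0}$ is left $G_{x_0}$-invariant, hence descends to $G_{x_0}\backslash G$. The Fourier interpretation \eqref{Zak_Fourier_nonabelian} combined with the Weil formula on $G_{x_0}\backslash G$ (normalising $\mu_{G_{x_0}}$ to unit mass) then expresses $\calZ_\rho f(x_0, \sigma)$ as the quotient Fourier transform of $f_{x_0}$ in the sense of Section \ref{subsec:Poisson_compact}; in particular $\calZ_\rho f(x_0, \sigma)$ is of the form $T_\sigma P_\sigma^{(1)}$, so $\calZ_\rho f(x_0, \sigma)\, P_\sigma^{(1)} = \calZ_\rho f(x_0, \sigma)$. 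The hypothesis $\calZ_\rho f(x_0, \cdot) \in L^1(\widehat{G})$, transported through the bijection $\varphi : (G_{x_0})_G^\bot \to \widehat{G_{x_0}\backslash G}$ of \eqref{dual_quotient}, supplies the integrability needed to apply the left-quotient analogue of Proposition \ref{prop:Fourier_inversion_quotient}, which gives
\[
f_{x_0}(G_{x_0} g_0) = \int_{(G_{x_0})_G^\bot} \tr\!\bigl(\calZ_\rho f(x_0, \sigma)\, \sigma_{G_{x_0}\backslash G}(G_{x_0} g_0)\bigr)\, d\mu_{(G_{x_0})_G^\bot}(\sigma)
\]
for almost every coset. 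Using $\sigma_{G_{x_0}\backslash G}(G_{x_0} g_0) = P_\sigma^{(1)} \sigma(g_0)$ from \eqref{reciprocal_left}, trace-cyclicity together with the factorisation above collapses the integrand to $\tr(\calZ_\rho f(x_0, \sigma)\, \sigma(g_0))$, and $f(x) = f(\rho_{g_0}^{-1}(x_0)) = f_{x_0}(g_0)$ then delivers the required identity $f = \calZ_\rho^{-1} \calZ_\rho f$ almost everywhere.

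The main obstacle is the identification in the middle step: the Zak transform is literally defined as the $G$-Fourier transform of $f_{x_0}$, whereas Proposition \ref{prop:Fourier_inversion_quotient} inverts the quotient Fourier transform from Section \ref{subsec:Poisson_compact}. One must verify, via the Weil formula on $G_{x_0}\backslash G$, the dual identification $\varphi$, and the compatibility of the pushforward measure $\mu_{\widehat{G_{x_0}\backslash G}}$ with $\mu_{(G_{x_0})_G^\bot}$, that the two transforms agree up to the projection $P_\sigma^{(1)}$, which then disappears harmlessly under the trace. Once this identification is pinned down, the proof is a direct transcription of the abelian argument.
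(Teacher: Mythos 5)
Your proposal is correct and follows essentially the same route as the paper, which proves this proposition by combining the Fourier interpretation \eqref{Zak_Fourier_nonabelian} of $\calZ_\rho f(x_0,\cdot)$ as $\widehat{f_{x_0}}$ with the $L^1$-inversion and Poisson summation results of Section \ref{subsec:Poisson_compact} for the compact quotient $G_{x_0}\backslash G$. Your write-up is in fact more explicit than the paper's one-line proof, correctly isolating the only delicate point (matching the $G$-Fourier transform of the left $G_{x_0}$-invariant function $f_{x_0}$ with the quotient Fourier transform up to the projection $P_\sigma^{(1)}$, which is absorbed since $\calZ_\rho f(x_0,\sigma)P_\sigma^{(1)}=\calZ_\rho f(x_0,\sigma)$).
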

\begin{proof}
The proof is a direct consequence of the above discussion, equation \eqref{Zak_Fourier_nonabelian}, the $L^1$-Fourier inversion formula \eqref{Fourier_inversion_nonabelian}, and the Poisson summation formula \eqref{Poisson_quotient}.
\end{proof}
Again, the inversion formula can be viewed as a decomposition of a function into measures as in \cite{Zak1967}. However, these measures are operator-valued in the non-abelian case. 

\subsection{The non-abelian Zak transform on $L^2(X)$}
\label{subsec:Zak_nonabelian_L2}

The $L^2$-theory of the Fourier transform is again captured in a Plancherel theorem.

For compact groups $G$, the Peter-Weyl theorem states that $L^2(G) = \bigoplus_{\sigma \in \widehat{G}}\calE_\sigma$, where $\calE_\sigma$ is the translation invariant subspace of $L^2(G)$ that is spanned by the matrix elements of $\sigma$. We can combine left and right translation in the two-sided regular representation $T$ of $G\times G$ on $L^2(G)$ ($T_{(h,h')}f(g) := f(h^{-1}gh')$), then the Fourier transform intertwines $T$ and the direct sum representation $\bigoplus_{\sigma \in \widehat{G}}(\sigma\otimes\overline{\sigma})^{\oplus d_\sigma}$, $(d_\sigma = \dim(\calH_\sigma))$, of $G\times G$ that acts on the space $\bigoplus_{\sigma \in \widehat{G}} (\calH_\sigma\otimes\calH_{\overline{\sigma}})^{\oplus d_\sigma}$ by left and right modulation ($M_\sigma:G\times G\to \U(\calH_\sigma\otimes\calH_{\overline{\sigma}})$, $M_\sigma(g,h)A := \sigma(g)A\sigma(h)^*$, $A \in \calH_\sigma\otimes\calH_{\overline{\sigma}}$), where we identify the Fourier coefficient $\widehat{f}(\sigma)$ with a tensor in $\calH_\sigma\otimes\calH_{\overline{\sigma}}$ by interpreting the linear operator as an antilinear map from $\calH_\sigma^* = \calH_{\overline{\sigma}}$ to $\calH_\sigma$. This can be done, since $\widehat{f}(\sigma)$ is a Hilbert-Schmidt operator, and is natural, when for example calculating matrix coefficients of the representation w.r.t. to a basis of $\calH_\sigma$ and the dual basis of $\calH_{\sigma}^* = \calH_{\overline{\sigma}}$.

The Plancherel theorem for second countable unimodular type I groups \cite[Thm. 7.44]{Folland1995} says essentially the same. Namely, that there is a unique measure $\mu_{\widehat{G}}$ on $\widehat{G}$, s.t. the Fourier transform \eqref{Fourier_nonabelian} is a Hilbert space isomorphism betwenn $L^2(G)$ and $\int_{\widehat{G}}^\oplus \calH_\sigma\otimes\calH_{\overline{\sigma}}\d\mu_{\widehat{G}}(\sigma)$, and that the Fourier inversion formula \eqref{Fourier_inversion_nonabelian} can be extended to $L^2(G)$. As for compact groups, the Fourier transform intertwines the two-sided regular representation with the two-sided modulation representation.

Before we can state the analog of Theorem \ref{thm:Zak_abelian}, we need to give a precise version of the refinement of the vanishing property of the Zak transform given after Lemma \ref{lem:Zak_invariance_nonabelian}.

Let $G$ be a second countable unimodular type I group and $\sigma \in \widehat{G}$ an irreducible representation of $G$. For $x_0 \in F$, the stabilizer $G_{x_0}$ is a compact subgroup of $G$ by properness of the action.

As we know that the functions $f_{x_0}:g\mapsto \rho_g f(x_0)$ are left $G_{x_0}$-invariant, we can apply Corollary \ref{cor:Plancherel_quotient} to show that the Plancherel transform maps $f_{x_0}$ to a tensor field in $\calH_\sigma^{(1,x_0)}\otimes\calH_{\overline{\sigma}}$ on $(G_{x_0})_G^\bot$, where the additional index $x_0$ reflects the dependence on the stabilizer $G_{x_0}$.

These considerations allow to formulate the following theorem.

\begin{theorem}[Non-abelian Zak transform on $L^2(X)$] \label{thm:Zak_nonabelian}
Let $G$ be a second countable unimodular type I group that acts strongly proper on a Weil $G$-space $(X,\mu_{\rho\backslash X}^\#)$ via $\rho:G\times X\to X$, and $F$ a measurable fundamental domain of the action $\rho$. The Zak transform can be extended to a Hilbert space isomorphism
\begin{equation} \label{Zak_isometry_nonabelian}
\calZ_\rho:L^2(X,\mu_{\rho\backslash X}^\#)\to \int_F^\oplus\int_{(G_{x_0})_G^\bot}^\oplus\calH_\sigma^{(1,x_0)}\otimes\calH_{\overline{\sigma}}\d\mu_{(G_{x_0})_G^\bot}(\sigma)\d\mu_F(x_0) =: L^{2,{\rm op}}_\rho(F\times\widehat{G}),
\end{equation}
where $\mu_F$ is given by eq. \eqref{measure_fundamental_domain}.

The Zak transform intertwines the action $\rho$ of $G$ on $L^2(X)$ with the modulation representation on $\calZ_\rho(L^2(X))$.
\begin{equation} \label{Zak_intertwining_nonabelian}
\calZ_\rho[\rho_g f](x_0,\sigma) = \sigma(g)\calZ_\rho f(x_0,\sigma),\ \ \ g \in G,\ x_0 \in F,\ \sigma \in \widehat{G},
\end{equation}
for $f \in L^2(X)$.

The inverse Zak transform $\calZ_\rho^{-1}$ is given by the extension of \eqref{Zak_inversion_nonabelian} to the image space.
\end{theorem}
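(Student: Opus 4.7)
The plan is to mimic the proof of Theorem \ref{thm:Zak_abelian}, replacing the lca Plancherel theorem with the Plancherel theorem for second countable unimodular type I groups, and using Corollary \ref{cor:Plancherel_quotient} in place of the Poisson summation formula for lca groups to identify the image space. First I would verify well-definedness: by Corollary \ref{cor:orbital_mean_L2}, for $f \in L^2(X,\mu_{\rho\backslash X}^\#)$ the orbit functions $f_{x}:g \mapsto \rho_g f(x)$ lie in $L^2(G)$ for $\mu_{\rho\backslash X}$-almost every orbit, so eq. \eqref{Zak_Fourier_nonabelian} makes sense a.e. and $\calZ_\rho f(x_0,\sigma)$ can be defined as the operator-valued Fourier transform of $f_{x_0}$.

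To prove isometry, I would chain three identities. Starting from the Weil formula (Theorem \ref{thm:Weil_formula}) applied to $|f|^2$, one obtains
\begin{equation*}
\|f\|_2^2 = \int_{\rho\backslash X}\int_G |f_x(g)|^2\d\mu_G(g)\d\mu_{\rho\backslash X}(\pi_\rho(x)).
\end{equation*}
The Plancherel theorem for $G$ turns the inner integral into $\int_{\widehat{G}} \|\widehat{f_x}(\sigma)\|_{\HS}^2 \d\mu_{\widehat{G}}(\sigma)$, and by eq. \eqref{Zak_Fourier_nonabelian} this equals $\int_{\widehat{G}} \|\calZ_\rho^X f(x,\sigma)\|_{\HS}^2 \d\mu_{\widehat{G}}(\sigma)$. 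Since this function of $x$ is $\rho$-invariant (by Lemma \ref{lem:Zak_invariance_nonabelian} and unitarity of $\sigma$), $G$ is unimodular (second countable type I groups need not be unimodular in general but this is a standing hypothesis), so formula \eqref{integration_fundamental_domain} replaces the $\rho\backslash X$-integral by one over $F$. Finally, since $f_{x_0}$ is left $G_{x_0}$-invariant and Lemma \ref{lem:Zak_invariance_nonabelian} forces $\calZ_\rho f(x_0,\sigma)$ to vanish outside $(G_{x_0})_G^\bot$, Corollary \ref{cor:Plancherel_quotient} (applied to the left $G_{x_0}$-quotient) identifies the Fourier image of $f_{x_0}$ with an element of $\int_{(G_{x_0})_G^\bot}^\oplus \calH_\sigma^{(1,x_0)}\otimes \calH_{\overline{\sigma}}\d\mu_{(G_{x_0})_G^\bot}(\sigma)$, yielding the target norm.

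For surjectivity I would reverse the chain. Given an element $h$ of the direct integral space on the right of \eqref{Zak_isometry_nonabelian}, extend it equivariantly to $X\times \widehat{G}$ via $h^X(x,\sigma) := h(x_0,\sigma)\sigma(g)$ whenever $\rho_g(x)=x_0 \in F$; this is well-defined because the extra factor $\sigma(g')$ for $g' \in G_{x_0}$ acts trivially on the tensor factor $\calH_\sigma^{(1,x_0)}$. For almost every $x_0$, applying the inverse Plancherel transform (in the form of \eqref{Zak_inversion_nonabelian}, which reduces to inverse Fourier transform of a left $G_{x_0}$-invariant $L^2$-function via Corollary \ref{cor:Plancherel_quotient}) produces an $L^2$ function on $G_{x_0}\backslash G$, hence a left $G_{x_0}$-invariant function on $G$, which glues into a well-defined function on $X$ through the orbit parametrization $\rho_g(x_0)$. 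The Weil formula run backwards shows this function lies in $L^2(X)$ with the correct norm and is the preimage of $h$.

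The intertwining property \eqref{Zak_intertwining_nonabelian} is a direct computation: left-invariance of $\mu_G$ gives $\calZ_\rho[\rho_g f](x_0,\sigma) = \int_G f(\rho_{g^{-1}h}^{-1}(x_0))\sigma(h)^*\d\mu_G(h) = \sigma(g)\int_G f(\rho_h^{-1}(x_0))\sigma(h)^*\d\mu_G(h)$, using the substitution $h \mapsto gh$ and the homomorphism property of $\sigma$. The main obstacle, in my view, is the bookkeeping in the surjectivity step: one must verify that the candidate preimage is genuinely a measurable function on $X$ (not just a measurable field on $F\times G$) and that its $G_{x_0}$-invariance is consistent with the equivariant extension, which is exactly what the support condition $\sigma \in (G_{x_0})_G^\bot$ and the restriction to the tensor subspace $\calH_\sigma^{(1,x_0)}\otimes \calH_{\overline{\sigma}}$ guarantee through Corollary \ref{cor:Plancherel_quotient}.
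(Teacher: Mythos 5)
Your proposal follows the paper's proof essentially step for step: well-definedness via Corollary \ref{cor:orbital_mean_L2}, isometry by chaining the Weil formula, the Plancherel theorem, the $\rho$-invariance of $x\mapsto\int_{\widehat{G}}\|\calZ_\rho^Xf(x,\sigma)\|_{\HS}^2\d\mu_{\widehat{G}}(\sigma)$, formula \eqref{integration_fundamental_domain}, and the refined vanishing property via Corollary \ref{cor:Plancherel_quotient}; and surjectivity by reversing the chain with the equivariant extension, exactly as in Theorem \ref{thm:Zak_abelian}.

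One concrete slip: in your computation of the intertwining property \eqref{Zak_intertwining_nonabelian} you invoke left-invariance of $\mu_G$ and the substitution $h\mapsto gh$. With the paper's convention $(\rho_g f)(x)=f(\rho_g^{-1}(x))$ one has $\rho_h(\rho_g f)(x_0)=f(\rho_{hg}^{-1}(x_0))$, so the required substitution is $h\mapsto hg$, a \emph{right} translation, and this is where unimodularity of $G$ enters (as the paper notes). The left-translation substitution you wrote would instead produce $\sigma(gh)^*=\sigma(h)^*\sigma(g)^*$ and hence a factor $\sigma(g)^*$ multiplying $\calZ_\rho f(x_0,\sigma)$ from the right --- that is the equivariance law of Lemma \ref{lem:Zak_invariance_nonabelian}, not the intertwining relation $\sigma(g)\calZ_\rho f(x_0,\sigma)$. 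The distinction is invisible in the abelian case but matters here; the fix is a one-line change.
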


\begin{proof}
The proof is essentially identical to that of Theorem \ref{thm:Zak_abelian}, using the non-abelian versions of the used results.

By Corollary \ref{cor:orbital_mean_L2}, the functions $f_{x_0}$ are in $L^2(G)$ for almost all $x_0 \in F$, so the Zak transform is well-defined almost everywhere by the $L^2$-version of eq. \eqref{Zak_Fourier_nonabelian}.

The Zak transform is an isometry by combining the Weil formula, the Plancherel theorem, eq. \eqref{integration_fundamental_domain} for the fundamental domain $F$, invariance of $x\mapsto \int_{\widehat{G}}\|\calZ_\rho^X(x,\sigma)\|_{\rm HS}\d\mu_{\widehat{G}}(\sigma)$, and the refined vanishing property. For $f \in L^1(X)\cap L^2(X)$, we get
\begin{align*}
\|f\|_2^2 &\stackrel{\rm Weil}{=} \int_{\rho\backslash X}\int_G|f_x|^2\d\mu_G\d\mu_{\rho\backslash X}(\pi_\rho(x)) \stackrel{\rm Plancherel}{=} \int_{\rho\backslash X}\int_{\widehat{G}}\|\widehat{f_x}\|_{\rm HS}^2\d\mu_{\widehat{G}}\d\mu_{\rho\backslash X}(\pi_\rho(x)) \\
&\stackrel{\eqref{Zak_Fourier_nonabelian}}{=} \int_{\rho\backslash X}\int_{\widehat{G}}\|\calZ_\rho f(x,\sigma)\|_{\rm HS}^2\d\mu_{\widehat{G}}(\sigma)\d\mu_{\rho\backslash X}(\pi_\rho(x)) \\
&\stackrel{\eqref{integration_fundamental_domain}}{=} \int_F\int_{\widehat{G}}\|\calZ_\rho^X(x,\sigma)\|_{\rm HS}^2\d\mu_{\widehat{G}}(\sigma)\d\mu_F(x) = \|\calZ_\rho f\|_{L^2_\rho(F\times\widehat{G})}^2.
\end{align*}
Thus, $\calZ_\rho$ can be extended to an isometry from $L^2(X)$ into $L^{2,\rm op}_\rho(F\times\widehat{G})$.

The proof of surjectivity works exactly as in Theorem \ref{thm:Zak_abelian}, using Corollary \ref{cor:Plancherel_quotient} for the left $G_{x_0}$-invariant functions $f_{x_0}$.

The intertwining property \eqref{Zak_intertwining_nonabelian} is again shown by direct computation, using the definition \eqref{Zak_nonabelian} and right invariance of $\mu_G$, which is due to unimodularity of $G$.
\end{proof}

There is a Hausdorff-Young inequality for second countable unimodular type I groups (see \cite{Kunze1958}) to prove an analog result for a Zak transform $\calZ_\rho:L^p(X)\to\calZ_\rho(L^p(X))\subseteq L^q(F\times\widehat{G})$ for $p \in (1,2)$ and $q = p/(p-1)$.

As in the abelian case (see \eqref{Zak_measure}), the reconstruction formula can be interpreted as a decomposition into measures. These operator-valued Zak measures are
\begin{equation} \label{Zak_measures_nonabelian}
\delta_G^{(x_0,\chi)}(\varphi) := \int_G\rho_g\varphi(x_0)\sigma(g)\d\mu_G(g) = \calZ_\rho\varphi(x_0,\sigma^*),\ \ \ \varphi \in C_c(X).
\end{equation}
They have the analog properties as their abelian counterparts and satisfy a weak inversion formula.

An important special case of Theorem \ref{thm:Zak_nonabelian} is the case that almost all stabilizers are trivial. In this case, the image of the Zak transform is the full space $\int_F^\oplus\int_{\widehat{G}}^\oplus\calH_\sigma\otimes\calH_{\overline{\sigma}}\d\mu_{\widehat{G}}(\sigma)\d\mu_F$.

\subsection{The character Zak transform and other alternatives}

The above generalization of the Zak transform is not the only possibility. As already mentioned in the Introduction, one can easily generalize to general actions of groups on $L^2(X)$. A few further approaches to non-abelian actions will be discussed in this section, since they might be of interest or have already been considered elsewhere.

One alternative to the Zak transform given in Definition \ref{def:Zak_nonabelian} uses the characters of the representations. This approach was already considered by Wigner \cite{Wigner1931}.

Let $G$ be an lcH group and $\sigma \in \widehat{G}$ a unitary irreducible representation. The character $\chi_\sigma:G\to\C$ of $\sigma$ is defined as
\begin{equation} \label{character}
\chi_\sigma(g) := \tr(\sigma(g)),\ \ \ g \in G.
\end{equation}

In the case that $G$ is abelian, this agrees with the standard definition in eq. \eqref{dual_group}, as the irreducible representations are one-dimensional. When $G$ is non-abelian, the characters are not group homomorphisms and transform in a more complicated way, namely
\begin{equation} \label{character_law_nonabelian}
\chi_\sigma(g^{-1}h) = \tr(\sigma(g)^*\sigma(h)) = \langle\sigma(h),\sigma(g)\rangle_{\HS},\ \ \ \sigma \in \widehat{G},\ g,h \in G.
\end{equation}
This is the price you pay for decomposing functions on a non-abelian structure into scalar functions.

We define the character Zak transform as follows.

\begin{definition}[Character Zak transform] \label{def:Zak_character}
Let $G$ be a second countable unimodular type I group that acts strongly proper on a Weil $G$-space $(X,\mu_{\rho\backslash X}^\#)$ via $\rho:G\times X\to X$, and $F$ a measurable fundamental domain of the action $\rho$. The character Zak transform $\calZ_\rho^\tr f$ of a function $f \in L^1(X,\mu_{\rho\backslash X}^\#)$ with $f_x \in L^1(G)$ for almost every $\pi_\rho(x) \in \rho\backslash X$ is defined as
\begin{equation} \label{Zak_character}
\calZ_\rho^\tr f(x_0,\sigma) := \int_G\rho_gf(x_0)\overline{\chi_\sigma(g)}\d\mu_G(g),\ \ \ x_0 \in F,\ \sigma \in \widehat{G}.
\end{equation}
Again the extended Zak transform $\calZ_\rho^{\tr,X}$ on $X\times\widehat{G}$ is given by eq. \eqref{Zak_character} with $x_0 \in X$.
\end{definition}

The character Zak transform is related to the Zak transform defined in Definition \ref{def:Zak_nonabelian} by simply taking the trace of the latter:
\begin{equation} \label{Zak_trace}
\calZ_\rho^\tr f(x_0,\sigma) = \tr(\calZ_\rho f(x_0,\sigma)),\ \ \ x_0 \in F,\ \sigma \in \widehat{G},
\end{equation}
where $f$ is as in Definition \ref{def:Zak_character}. From eqs. \eqref{character_law_nonabelian} and \eqref{Zak_trace}, we can infer the following transformation law of the extended character Zak transform
\begin{equation} \label{Zak_transformation_character}
\calZ_\rho^{\tr,X} f(\rho_g^{-1}(x_0),\sigma) = \langle\sigma(g),\calZ_\rho(x_0,\sigma)^*\rangle_\HS,\ \ \ g \in G,\ x_0 \in F,\ \sigma \in \widehat{G},
\end{equation}
for $f$ as above. A somewhat subtle consequence is that the function $f$ can only be reconstructed from the extended character Zak transform $\calZ_\rho^{\tr,X}f$ and not from $\calZ_\rho^\tr f$, namely
\begin{equation} \label{Zak_character_inverse}
f(x) = \int_{\widehat{G}}\calZ_\rho^{\tr,X}f(x,\sigma)\d\mu_{\widehat{G}}(\sigma),\ \ \ x \in X,
\end{equation}
which can be seen from eqs. \eqref{Zak_transformation_character} and \eqref{Zak_inversion_nonabelian}. Some information that is lost by taking the trace of $\calZ_\rho f$ can thus be retrieved from the transformation law of $\calZ_\rho^{\tr,X}f$. More precisely, the characters $\chi_\sigma$, $\sigma \in \widehat{G}$ are conjugation invariant, and only span the space of conjugation invariant $L^2$-functions (which are called class functions)\footnote{The resulting Fourier transform of class functions can be understood in more detail. In the case of a compact group, the sets of conjugacy classes and of characters, both have the structure of a hypergroup. These hypergroups are dual to each other (see \cite{Bloom1995},\cite{Lasser}).}.

Definition \ref{def:Zak_nonabelian} is not the only way of generalizing the Zak transform for abelian groups and actions.

The Zak transform on lca groups $G$ with a closed subgroup $H$ acting by right translation is usually defined in terms of $\widehat{G}/H_G^\bot$ instead of $\widehat{H}$. As already noted, the two objects are isomorphic (via the restriction operator) -- a fact that is essential for proving the Poisson summation formula for lca groups. The relevance of the Zak transform for sampling theory and Gabor analysis is largely due to its close relation to the Poisson summation formula (for details, see e.g. \cite{Groechenig2001}). In this spirit, one could try to define a Zak transform on non-abelian groups based on the restriction of the representations of $X = G$ to a closed subgroup $H$ that acts on $G$. These restricted representations are in general not irreducible, s.t. the resulting transform is not equivalent to our definition. We will not study this version of a Zak transform in more detail here, mainly due to the fact that one looses the relation to the Fourier transform on $H$. To study this transform, one would need to develop a harmonic analysis on quite general quotients $G/H$, similar to that developped in Section \ref{subsec:Poisson_compact}.

In \cite{Kutyniok2002}, Kutyniok bypasses the shortcomings of the dual $\widehat{G}$ of a non-abelian group $G$ by constructing an injective map $G\to \widehat{L}$, where $L$ is a suitable lca group that plays the role of $\widehat{G}$. In addition, a further group $Z$ and an action $\tau:G\to\Aut(L\times Z)$ are constructed, s.t. the group $G\ltimes_\tau(L\times Z)$ can be used as substitute for the Heisenberg group. Under certain technical conditions, this setting yields a Zak transform with most features of the abelian transform.

Finally, it should be noted that versions of Plancherel's theorem exist for more general situations than second countable unimodular type I groups, possibly allowing to generalize the versions in this article. For example, there is a rich Fourier theory for symmetric spaces (see, e.g., the survey article \cite{Ban1996}).

\section{Applications of the Zak transform} \label{sec:Zak_applications}

\subsection{A Bloch-Floquet theorem for group actions}
\label{subsec:Bloch_Floquet}

As noted in the Introduction, the classic Zak transform was considered in \cite{Zak1967} as a refinement of the decomposition of electron states in crystals into Bloch waves \cite{Bloch1929}, which are periodic functions that are suitably modulated. The Bloch decomposition relies on the fact that the partial differential equation under consideration -- the non-relativistic Schr\"odinger equation for a single electron -- is invariant under translation by a crystal lattice vector. In representation theoretic terms, the classic Zak transform diagonalizes the Hamiltonian. Even earlier, Floquet \cite{Floquet1883} applied the same argument to decompose ordinary differential equations that are invariant w.r.t. discrete translation groups in a similar manner. The recent survey article \cite{Kuchement2016} gives an overview of the theory and applications. The Zak transforms \eqref{Zak_abelian} and \eqref{Zak_nonabelian} provide the possibility to generalize the theory to operators that are invariant w.r.t. more general group actions.

\begin{corollary}[Bloch-Floquet theorem for strongly proper actions] \label{cor:Bloch_Floquet}
Let $(X,\mu_{\rho\backslash X}^\#)$ be a Weil $G$-space, and $F$ a measurable fundamental domain of the action $\rho$. Moreover, let $B(X)$ be a linear space of functions on $X$ that is $\rho$-invariant (e.g. a solution space of a differential equation on $X$ or an $L^p$-space), and on which the Zak transform is well-defined. Then every function $f \in B(X)$ can be decomposed as follows:
\begin{enumerate}
\item[(i)] When $G$ is an lca group, there are $\rho$-invariant functions $\calB_F^\chi:X\to\C$, $\chi \in \widehat{G}$, s.t.
\begin{equation} \label{Bloch_abelian}
f(x) = \int_{\widehat{G}}\calB_F^\chi(x)\chi(g)\d\mu_{\widehat{G}}(\chi),\ \ \ x \in X,\ \rho_g(x) \in F.
\end{equation}
The functions $x\mapsto\calB_F^\chi(x)\chi(g)$ are called Bloch waves and are given by
\begin{equation} \label{Bloch_Zak_abelian}
\calB_F^\chi(x) = \calZ_\rho f(x_0,\chi),\ \ \ x \in \rho_G(x_0).
\end{equation}
\item[(ii)] When $G$ is a non-abelian second countable unimodular type I group, there are $\rho$-invariant tensor fields $\calB_F^\sigma:X\to\calH_\sigma\otimes\calH_{\overline{\sigma}}$, $\sigma \in \widehat{G}$, on $X$, s.t.
\begin{equation} \label{Bloch_nonabelian}
f(x) = \int_{\widehat{G}}\tr(\calB_F^\sigma(x)\sigma(g))\d\mu_{\widehat{G}}(\sigma),\ \ \ x \in X,\ \rho_g(x) \in F.
\end{equation}
The tensor fields $x\mapsto\calB_F^\sigma(x)\sigma(g)$ are called Bloch tensor fields and are given by
\begin{equation} \label{Bloch_Zak_nonabelian}
\calB_F^\sigma(x) = \calZ_\rho f(x_0,\sigma),\ \ \ x \in \rho_G(x_0).
\end{equation}
\end{enumerate}
In both cases, when the space $B(X)$ is the solution space of an invariant differential equation, then the Zak transform intertwines the differential operator with an operator that acts component-wise on the invariant subspaces.
\end{corollary}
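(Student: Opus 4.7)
The plan is to deduce both parts of the statement as essentially immediate consequences of the already-established machinery: the equivariance property of the extended Zak transform (Lemma \ref{lem:Zak_invariance} and Lemma \ref{lem:Zak_invariance_nonabelian}), the inversion formulas (Propositions \ref{prop:Zak_inversion} and \ref{prop:Zak_inversion_nonabelian} and their $L^2$-extensions in Theorems \ref{thm:Zak_abelian} and \ref{thm:Zak_nonabelian}), and the intertwining identities \eqref{Zak_intertwining} and \eqref{Zak_intertwining_nonabelian}. So the task is really one of bookkeeping, matching the quantities $\calB_F^\chi$ and $\calB_F^\sigma$ against the values of the (extended) Zak transform.

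For part (i), I would define $\calB_F^\chi(x) := \calZ_\rho f(x_0,\chi)$ for the unique $x_0 \in F$ with $x \in \rho_G(x_0)$. Since the right-hand side depends only on the orbit $\rho_G(x) = \rho_G(x_0)$, the function $\calB_F^\chi$ is manifestly $\rho$-invariant. Writing $x = \rho_g^{-1}(x_0)$, the equivariance identity \eqref{Zak_invariance} gives $\calZ_\rho^X f(x,\chi) = \chi(g)\calZ_\rho f(x_0,\chi) = \chi(g)\calB_F^\chi(x)$. The inversion formula \eqref{Zak_inversion_2}, extended from $L^1$ to the space $B(X)$ under the standing assumption that $\calZ_\rho$ is well-defined on $B(X)$, then yields \eqref{Bloch_abelian}; the fact that the integrand vanishes off $(G_{x_0})_G^\bot$ means that extending the domain of integration to all of $\widehat{G}$ as in the statement of the corollary is harmless.

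Part (ii) is identical in structure but uses the non-abelian tools. I set $\calB_F^\sigma(x) := \calZ_\rho f(x_0,\sigma)$ for $x \in \rho_G(x_0)$, which is again $\rho$-invariant by construction and, via the identification of Hilbert--Schmidt operators with tensors in $\calH_\sigma\otimes\calH_{\overline{\sigma}}$ discussed before Theorem \ref{thm:Zak_nonabelian}, is a tensor field of the asserted type. Lemma \ref{lem:Zak_invariance_nonabelian} gives $\calZ_\rho^X f(x,\sigma) = \calB_F^\sigma(x)\,\sigma(g)$ when $x = \rho_g^{-1}(x_0)$, whence $\tr(\calB_F^\sigma(x)\sigma(g)) = \tr(\calZ_\rho^X f(x,\sigma))$ using cyclicity of the trace (and the fact that $\calB_F^\sigma(x)\sigma(g)$ vanishes outside the $G_{x_0}$-reciprocal piece makes the ``full $\widehat{G}$'' integral in \eqref{Bloch_nonabelian} agree with \eqref{Zak_inversion_nonabelian}).

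For the final intertwining assertion, let $D$ be a $\rho$-invariant operator on $B(X)$, i.e. $D\rho_g = \rho_g D$. Conjugation by $\calZ_\rho$ produces an operator $\widetilde D := \calZ_\rho D \calZ_\rho^{-1}$ on $\calZ_\rho(B(X))$; by \eqref{Zak_intertwining} (resp.\ \eqref{Zak_intertwining_nonabelian}), $\widetilde D$ commutes with the modulation representation $(x_0,\chi,\xi)\mapsto \chi(g)\xi$ (resp.\ left multiplication by $\sigma(g)$). Standard decomposition-of-operators arguments for direct integrals then force $\widetilde D$ to act fiber-wise, i.e.\ as a measurable field $\{\widetilde D_{(x_0,\sigma)}\}$ with $\widetilde D_{(x_0,\sigma)}$ acting on $\calH_\sigma^{(1,x_0)}\otimes\calH_{\overline{\sigma}}$ (and analogously in the abelian case on $\C$). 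The main (only) real subtlety I anticipate is the last step: spelling out, in the non-abelian case, precisely in what sense a decomposable operator acts ``component-wise'' on the $\sigma$-isotypical subspaces, since the fibers carry a residual left $\sigma$-action and ``component-wise'' must be interpreted compatibly with that intertwining action; once one fixes the convention that $\widetilde D_{(x_0,\sigma)}$ acts only on the right-hand tensor factor $\calH_{\overline{\sigma}}$, the claim is immediate from Schur.
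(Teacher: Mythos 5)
Your proposal is correct and follows exactly the route the paper intends: the paper gives no explicit proof of this corollary, treating it as an immediate consequence of the equivariance lemmas, the inversion formulas \eqref{Zak_inversion_1}/\eqref{Fourier_inversion_nonabelian}, and the intertwining identities, which is precisely the bookkeeping you carry out (your identification $\calB_F^\sigma(x) = \calZ_\rho f(x_0,\sigma)$ together with $f(x) = f_{x_0}(g)$ for $\rho_g(x) = x_0$ is the whole argument). Your flagged subtlety about the precise meaning of ``component-wise'' in the non-abelian case is real, but the paper itself leaves it at the same level of informality.
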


As an application of \ref{cor:Bloch_Floquet}, consider the proper action of a discrete subgroup $S$ of the Euclidean group $\E(3) = \R^3\rtimes\O(3)$ of isometries of $\R^3$. Molecular structures that are invariant w.r.t. such a group are called objective structures \cite{James2006} and are natural generalizations of crystals. Now, consider a single electron in an objective structure with symmetry group $S$. The non-relativistic Schr\"odinger equation for the state $\psi$ of the electron is of the form
\begin{equation} \label{Schroedinger}
H\psi := \left(-\Delta + V\right)\psi = E\psi,
\end{equation}
where $E$ is the energy, $\Delta$ is the Laplacian, and $V$ is the $\rho$-invariant potential. Let $\Psi^E$ be the space of $L^2$-solutions of eq. \eqref{Schroedinger}. To apply Corollary \ref{cor:Bloch_Floquet} to the space $\Psi^E$, we need to show that the group $S$ is type I (as a discrete group it is second countable and unimodular) and that the action is strongly proper. The latter follows from the result in \cite{Chabert2001}, since $S$ and $\R^3$ are both second countable. That $S$ is type I can be seen as follows.

Let $\rmT(S) := \{(\bfI|c) \in S \,|\, c \in \R^3\}$ be the subgroup of pure translations in $S$, where we used the notation $(\bfQ|c)$ for an element of $\E(3)$, where $\bfQ \in \O(3)$ and $c \in \R^3$, and the action on $\R^3$ is given by $\rho_{(\bfQ|c)}(x) = \bfQ x + c$, $x \in \R^3$. The group $\rmT(S)$ is always an abelian normal subgroup of $S$, since for $(\bfI|c) \in \rmT(S)$ and $(\bfQ|c') \in S$,
$$(\bfQ|c')(\bfI|c)(\bfQ|c')^{-1} = (\bfI|\bfQ c) \in \rmT(S).$$
Now, when the quotient $S/\rmT(S)$ is finite, then $S$ is type I due to \cite{Thoma1964}, since $\rmT(S)$ is an abelian normal subgroup of finite index. If $S/\rmT(S)$ is not finite, it needs to contain a screw displacement by an irrational angle, because these are the only elements of $\E(3)$ that generate infinite groups that don't contain translations. The only groups of the considered type that contain such an element are helical groups without translations (as shown in \cite{Dayal}). Moreover, the subgroup of all screw displacements of a helical group is an abelian normal subgroup of finite index, completing the proof. In summary we have shown:

\begin{proposition}[Discrete subgroups of $\E(3)$]
All discrete subgroups of $\E(3)$ are type I.
\end{proposition}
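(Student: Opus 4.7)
The plan is to apply Thoma's criterion \cite{Thoma1964}, which says that a discrete group is type I if and only if it contains an abelian normal subgroup of finite index. Thus, given a discrete subgroup $S \subseteq \E(3)$, I would exhibit such a subgroup by a short case distinction based on the translation subgroup $\rmT(S) = S \cap (\{\bfI\}\ltimes \R^3)$.

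First I would verify, as already indicated in the paragraph preceding the proposition, that $\rmT(S)$ is abelian (it sits inside the translation subgroup of $\E(3)$, which is $\R^3$) and normal in $S$ (via the conjugation identity $(\bfQ|c')(\bfI|c)(\bfQ|c')^{-1} = (\bfI|\bfQ c)$). If the index $[S:\rmT(S)]$ is finite, the proposition follows immediately from Thoma's theorem with $N = \rmT(S)$.

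The main work, and the step I expect to be the principal obstacle, is the case where $[S:\rmT(S)] = \infty$. The plan is to argue that any coset of $\rmT(S)$ in $S$ is represented by some $(\bfQ|c)$ with $\bfQ \in \O(3)$ nontrivial, and if $S/\rmT(S)$ is infinite and discrete in $\E(3)$, then $S$ must contain an element of infinite order whose rotational part is a rotation by an irrational multiple of $\pi$. Such an element cannot be a pure rotation (the orbit of any point in $\R^3$ would accumulate, violating discreteness of $S$ in $\E(3)$), so it must be a genuine screw displacement with nonzero translational component along its axis. I would then invoke the classification of discrete subgroups of $\E(3)$ containing an irrational screw displacement (as used in \cite{Dayal}) to conclude that $S$ is a helical group with $\rmT(S) = \{e\}$, and that the subgroup $\rmS(S)$ of all screw displacements sharing the common screw axis is abelian (being a subgroup of a one-parameter subgroup of $\E(3)$) and normal in $S$ of finite index (the quotient $S/\rmS(S)$ being realized by the finite stabilizer of the axis in $\O(2)$ acting on the axis).

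With these two cases handled, Thoma's criterion gives the conclusion in both. I would keep the treatment of the helical case concise by citing the classification results in \cite{James2006,Dayal}, since re-deriving that classification would take us far afield; the key point for the proof is only that a discrete subgroup of $\E(3)$ whose quotient by translations is infinite is forced into a very restricted geometric shape, after which the desired abelian normal subgroup of finite index is visible by inspection.
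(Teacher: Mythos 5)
Your proposal is correct and follows essentially the same route as the paper: both reduce to Thoma's criterion via the abelian normal translation subgroup $\rmT(S)$, and in the infinite-index case both appeal to the fact that only an irrational screw displacement can generate an infinite discrete quotient, forcing $S$ to be a helical group whose subgroup of screw displacements is abelian, normal, and of finite index. Your additional remarks (why the element cannot be a pure rotation, how the finite quotient is realized by the axis stabilizer) merely flesh out steps the paper leaves to the cited classification in \cite{Dayal}.
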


As a corollary, we get the following.

\begin{corollary}[Bloch/Zak decomposition of electron states in objective structures]
Every solution $\psi \in \Psi^E$ of eq. \eqref{Schroedinger} can be decomposed into Bloch tensor fields:
\begin{equation} \label{Bloch_OS}
\psi(x) = \int_{\widehat{S}}\tr(\calB_F^\sigma(x)\sigma(g))\d\mu_{\widehat{S}}(\sigma),\ \ \ \rho_g(x) \in F.
\end{equation}
where $\calB_F^\sigma$, $\sigma \in \widehat{S}$ is given by
\begin{equation} \label{Bloch_Zak_OS}
\calB_F^\sigma(x) = \calZ_\rho\psi(x_0,\sigma),\ \ \ x \in \rho_G(x_0).
\end{equation}
The Zak transform intertwines the Hamiltionian $H$ with a component-wise operator on the invariant subspaces.
\end{corollary}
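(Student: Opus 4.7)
The plan is to reduce the statement to a direct application of Corollary~\ref{cor:Bloch_Floquet}(ii), so the work is entirely in verifying the hypotheses of that corollary for the action of $S$ on $\R^3$ and for the linear space $\Psi^E$.

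First I would check the group-theoretic assumptions. The group $S$ is second countable as a discrete subgroup of the second countable group $\E(3)$, it is unimodular since every discrete group is (counting measure is bi-invariant), and it is type I by the Proposition immediately preceding the corollary. The action $\rho$ of $S$ on $\R^3$ by isometries is continuous and proper (the transporter of two compact sets is finite because $S$ is discrete and acts by isometries); hence, by the Chabert--Echterhoff--Meyer result cited in Section~\ref{subsec:proper_G_spaces}, it is strongly proper. Lebesgue measure on $\R^3$ is $\rho$-invariant (isometries preserve it), so $(\R^3,\calL)$ is a Weil $S$-space with $q\equiv 1$, and a measurable fundamental domain $F$ exists by the general discussion in Section~\ref{subsec:fundamental_domain}.

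Next I would verify the hypotheses on the function space. Since $V$ is $\rho$-invariant by assumption and $\Delta$ commutes with all Euclidean isometries, the Hamiltonian $H$ is $\rho$-invariant, i.e. $\rho_g H = H\rho_g$ for every $g \in S$. Therefore the eigenspace $\Psi^E\subseteq L^2(\R^3)$ is $\rho$-invariant, and by Theorem~\ref{thm:Zak_nonabelian} the Zak transform is well-defined on $\Psi^E$. Applying Corollary~\ref{cor:Bloch_Floquet}(ii) to the triple $(S,\R^3,\Psi^E)$ then yields the decomposition \eqref{Bloch_OS} with the coefficient tensor fields \eqref{Bloch_Zak_OS}, and identifies the Bloch tensor fields explicitly as $\calB_F^\sigma(x)=\calZ_\rho\psi(x_0,\sigma)$ on the orbit through $x_0 \in F$.

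For the intertwining statement, the key observation is that $H$ commutes with every $\rho_g$, so by the intertwining property \eqref{Zak_intertwining_nonabelian} of Theorem~\ref{thm:Zak_nonabelian} the conjugated operator $\calZ_\rho H \calZ_\rho^{-1}$ commutes with the modulation representation $\sigma(g)\otimes \id$ on each fiber $\calH_\sigma^{(1,x_0)}\otimes\calH_{\overline{\sigma}}$ of the direct integral \eqref{Zak_isometry_nonabelian}. A Schur-type argument, applied fiberwise in the direct integral, forces $\calZ_\rho H \calZ_\rho^{-1}$ to act component-wise with respect to the decomposition indexed by $(x_0,\sigma)$, which is exactly the asserted intertwining property.

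The main obstacle is the subtlety in the intertwining claim: $H$ is unbounded, so the Schur argument has to be carried out on an appropriate common core (e.g.\ the Zak-transform of $C_c^\infty$-functions in $\Psi^E$, or via the bounded resolvents $(H-z)^{-1}$, which are $\rho$-invariant and therefore direct-integral decomposable by the bounded version of the same argument). Once this measurability/domain issue is settled, the component-wise action follows automatically from the representation-theoretic content of Theorem~\ref{thm:Zak_nonabelian} together with invariance of $H$; the decomposition formula itself is a routine specialization of Corollary~\ref{cor:Bloch_Floquet}.
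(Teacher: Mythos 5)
Your proof follows the paper's own route exactly: the paper obtains this corollary by verifying the hypotheses of Corollary~\ref{cor:Bloch_Floquet} --- strong properness of the action via the Chabert--Echterhoff--Meyer result (since $S$ and $\R^3$ are second countable) and the type~I property via the preceding proposition --- and then applying it to the $\rho$-invariant eigenspace $\Psi^E$. Your extra care with the unbounded-operator/domain issues in the intertwining claim goes beyond what the paper records (it simply asserts the component-wise action), but is consistent with, and if anything strengthens, the paper's argument.
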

Note that the existence of a nice fundamental domain $F$ is guaranteed by \cite{Bourbaki1989}, since $S$ is discrete and $\R^3$ is $\sigma$-compact.

The classic Zak transform is intimately related to energy bands in crystals. A rigorous derivation of this connection can be found in \cite{Reed1978}. The reasoning is as follows. The Zak transform intertwines the Schr\"odinger operator with a direct integral of operators that act on the irreducible subspaces of the image of the Zak transform. Now, the single operators have a discrete spectrum that varies continuously w.r.t. the wave vector in the Brillouin zone (character of the lattice subgroup). As a consequence, the union of all the spectra is the discrete union of intervals, which are called energy bands.

The interesting question, how this theory can be generalized to objective structures lies beyond the scope of this article and will be treated elsewhere.

The analog of the Bloch decomposition for certain discrete abelian and finite symmetries has been studied by Banerjee \cite{Banerjee2011,Banerjee2013}. He used this decomposition to design algorithms for density functional theory for objective structures.

\subsection{The Zak transform in radiation design}
\label{subsec:radiation_design}

The success of classic X-ray crystallography for the analysis of molecular structures is due to the highly structured and sharply peaked diffraction patterns of crystals. The mathematical reason underlying this phenomenon is the Poisson summation formula, and the patterns contain information on the symmetry of the crystal and -- up to a phase problem -- on the atomic structure of a fundamental domain.

The theory of radiation design asks the question, whether there is radiation that produces highly structured and sharply peaked diffraction patterns for structures that are not crystals. The answer is affirmative for helical structures like carbon nanotubes or filamentous viruses, where the `right' radiation are so-called twisted X-rays (see \cite{Friesecke2016,Juestel2016} for details).

For general objective structures, parts of the theory (namely for abelian and compact groups) have been developped in \cite{Juestel2014}, where the Zak transform emerges as a tool to interpret the diffraction patterns. The detailed theory for arbitrary objective structures will be derived elsewhere.

As a direct application of the Zak transform, we can quickly sketch the line of thought. According to the diffraction model in \cite{Friesecke2016}, the spatial part of the outgoing field when a sample with electron density $\varphi$ is illuminated with time-harmonic radiation is essentially (up to a decay factor and constants) given by the so-called radiation transform of $\varphi$
\begin{equation} \label{radiation_transform}
\calR[\bfE_0]\rho(s_0) = \int_{\R^3}\rmP(s_0^\bot)\bfE_0(x)e^{-i\frac{\omega}{c}s_0\cdot x}\varphi(x)\d x,
\end{equation}
where $\bfE_0$ is a complex vector field (the spatial part of the incoming electric field of frequency $\omega > 0$), $c$ is speed of light, $s_0 \in S^2$ is the outgoing direction, and $\rmP(s_0^\bot) := (\bfI - s_0s_0^T) \in \R^{3\times 3}$ is an orthogonal projection. The model relies on the assumptions of a weak incoming field, a high frequency, and observation in the far-field.

In classic X-ray crystallography, the incoming field is a plane wave $\bfE_0(x) = \bfE^{(k)}(x) :=  n e^{ik\cdot x}$, where $k \in \frac{\omega}{c}S^2$, and $n \in \C^3$, $n\cdot k = 0$. The radiation transform integral essentially reduces to a Fourier transform of $\varphi$ in this case. Now, assume that $\varphi$ is invariant w.r.t. a discrete isometry group $S\leq\E(3)$ to model the electron density of an objective structure. Moreover, assume that given $\sigma \in \widehat{S}$, we can define the symmetry projection $P^\sigma_S\bfE^{(k)}$ of a plane wave $\bfE^{(k)}$ (possibly in the distributional sense), where the Euclidean group acts on vector fields as follows. Let $g = (\bfQ|c)$ for $\bfQ \in \O(3)$ and $c \in \R^3$, and $\bfE$ a complex vector field on $\R^3$, then
\begin{equation} \label{action_vf}
\rho_g\bfE(x) := \bfQ\bfE(\bfQ^T(x-c)),\ \ \ x \in \R^3.
\end{equation}
The resulting object $P_S^\sigma\bfE^{(k)}$ can be considered as a field of elements of the tensor product $\calH_\sigma\otimes\calH_{\overline{\sigma}}\otimes\R^3$. By a formal calculation we can show that the radiation transform reduces to an integral over a fundamental domain $F$ of the action of $S$ on $\R^3$. For this purpose, we define the Zak transform $\calZ_\rho\bfE^{(k)}(x_0,\sigma) := P_S^\sigma\bfE^{(k)}(x)\sigma(g)$, $\rho_g(x) = x_0 \in F$. Then, writing $e^{(\ell)}(x) := e^{i\ell\cdot x}$ for $\ell \in \R^3$, we get the following formal calculation
\begin{align*}
\calR[P_S^{\sigma}\bfE^{(k)}]\varphi(s_0) &= \rmP(s_0^\bot)\int_{\R^3}P_S^\sigma\bfE^{(k)}(x)e^{-i\frac{\omega}{c}s_0\cdot x}\varphi(x)\d x \\
&= \rmP(s_0^\bot)\int_F\int_S \calZ_\rho^{\R^3}\bfE^{(k)}(\rho_g^{-1}(x_0),\sigma)e^{-i\frac{\omega}{c}s_0\cdot \rho_g^{-1}(x_0)}\rho_g\varphi(x_0)\d\mu_S(g)\d\mu_F(x_0) \\
&= \rmP(s_0^\bot)\int_F\int_S \calZ_\rho\bfE^{(k)}(x_0,\sigma)\sigma(g)\rho_g(e^{(-\frac{\omega}{c}s_0)}\cdot\varphi)(x_0)\d\mu_S(g)\d\mu_F(x_0) \\
&= \rmP(s_0^\bot)\int_F\calZ_\rho\bfE^{(k)}(x_0,\sigma)\overline{\calZ_\rho(e^{(\frac{\omega}{c}s_0)}\cdot\varphi)(x_0,\sigma)}\d\mu_F(x_0)
\end{align*}
Now, using a generalization of the isometry property of the Zak transform, integration over $\widehat{S}$ yields
\begin{align*}
\int_{\widehat{S}}\calR[P_S^{\sigma}\bfE^{(k)}]\varphi(s_0)\d\mu_{\widehat{S}}(\sigma) &= \rmP(s_0^\bot)\langle\calZ_\rho\bfE^{(k)},\calZ_\rho(e^{(\frac{\omega}{c}s_0)}\cdot\varphi)\rangle = \rmP(s_0^\bot)\langle \bfE^{(k)},e^{(\frac{\omega}{c}s_0)}\cdot\varphi\rangle \\
&= \rmP(s_0^\bot)n\widehat{\varphi}(\tfrac{\omega}{c}s_0-k),
\end{align*}
where the scalar products are defined component-wise. This is again essentially a Fourier coefficient of $\varphi$. In particular, if one could measure the radiation transform w.r.t. this symmetry-adapted radiation for every $\sigma \in \widehat{S}$, one could reconstruct the density $\varphi$. However, in diffraction experiments one can only measure the intensity of the outgoing radiation that is essentially given by the norm $\|\calR[P_S^{\sigma}\bfE^{(k)}]\varphi(s_0)\|_2^2$ of the radiation transform. This is similar to the phase problem, but it's a combined phase-and-orientation problem for a complex vector.

\appendix

\section{Proofs}
Two more or less standard proofs can be found in this appendix for the convenience of the reader.

\begin{proof}{\bf (of Lemma \ref{lem:orbital_mean_operator})}
For $x \in X$ and $f \in C_c(X)$, consider the function $f_x:\ G\to\C,\ g\mapsto \rho_gf(x)$.
The support of $f_x$ can be written as a transporter of two compact sets:
$$\supp(f_x) = G_{\supp(f),\{x\}} = \{g \in G \,|\, \rho_g(\supp(f))\cap\{x\}\not= \emptyset\}.$$
Consequently, by properness of the action, the function $f_x$ is compactly supported. It is also continuous by the following identity of set functions: $f_x^{-1} = \pi_1\circ\rho^{-1}\circ f^{-1}$, using continuity of $f$ and $\rho$, and the fact that the projection $\pi_1:G\times X\to G$, $(g,x)\mapsto g$, is open. So, in particular, $f_x \in C_c(G)\subset L^1(G)$, showing that the integral in the definition of $A_\rho f$ converges for every $x \in X$.

The function $A_\rho f$ is well-defined on $\rho\backslash X$ since the integral is constant on orbits. For $f \in C_c(X)$ and $h \in G$, by left-invariance of $\mu_G$,
\begin{align*}
\int_G \rho_gf(\rho_h^{-1}(x))\d\mu_G(g) &= \int_G f(\rho_{(hg)^{-1}}(x))\d\mu_G(g) = \int_G \rho_gf(x)\d\mu_G(g).
\end{align*}
It remains to show that $A_\rho f \in C_c(\rho\backslash X)$. The support $\supp(A_\rho f)$ is compact as a closed subset of the compact set $\pi_\rho(\supp(f))\subset \rho\backslash X$.

Continuity of $A_\rho f$ can be seen as follows. Consider a net $(x_k)_{k \in I}$ that converges to $x_0 \in X$. We show that $(A_\rho f(x_k))_{k \in I}$ converges to $A_\rho f(x_0)$. By convergence of $(x_k)_{k \in I}$ and local compactness of $X$, there is a compact set $K_0\subset X$ that contains $x_0$, and s.t. $x_k \in K_0$ for all $k\geq k_0$ for some $k_0 \in I$. As seen above, for $x \in K_0$, we have
$$\supp(f_x) = G_{\supp(f),\{x\}} \subseteq \bigcup_{x \in K_0}G_{\supp(f),\{x\}} = G_{\supp(f),K_0},$$
which is a compact set by properness of the action. Also, the function
$$G_{\supp(f),K_0}\times \supp(f)\to \C,\ (g,x)\mapsto \rho_gf(x),$$
is continuous and compactly supported, and thus uniformly continuous by the Heine-Cantor theorem. In particular, the net $(f_{x_k})_{k \in I}$ converges to $f_{x_0}$ uniformly on $G_{\supp(f),K_0}$. Consequently, we can take the limit out of the integral to obtain
$$A_\rho f(x_0) = \int_G f_{x_0}\d\mu_G = \int_{G_{\supp(f),K_0}}\lim_{k\in I}f_{x_k}\d\mu_G = \lim_{k\in I}\int_{G_{\supp(f),K_0}}f_{x_k}\d\mu_G = \lim_{k\in I}A_\rho f(x_k)$$
for all $k \geq k_0$, showing continuity of $A_\rho f$.
\end{proof}

\begin{proof}{\bf (of Theorem \ref{thm:Weil_formula})} The proof generalizes the arguments of the proofs of the special case of homogeneous spaces $G/H$ given in \cite{Folland1995,Reiter2000}.

We start by investigating  the functions $\lambda_g$, $g \in G$, that are associated to the $\rho$-quasi-invariance of $\mu_X$. Let $f \in C_c(X)$ and $g_1,g_2 \in G$, then
\begin{align*}
\int_X f\cdot\lambda_{g_1g_2}\d\mu_X &= \int_X \rho_{g_1g_2}f\d\mu_X = \int_X \rho_{g_2}f\cdot \lambda_{g_1}\d\mu_X = \int_X f\cdot\rho_{g_2^{-1}}\lambda_{g_1}\cdot\lambda_{g_2}\d\mu_X.
\end{align*}
Since $f$ was arbitrary, this shows that
\begin{equation} \label{star}
\rho_{g_2}\lambda_{g_1} = \frac{\lambda_{g_1g_2^{-1}}}{\lambda_{g_2^{-1}}}\ \ \ \mbox{for}\ g_1,g_2 \in G. \tag{$*$}
\end{equation}
Now, define the function $q:X\to (0,\infty)$ by
\begin{equation} \label{q}
q(x) := \int_G\rho_g\beta(x)\frac{\lambda_{g^{-1}}(x)}{\Delta_G(g)}\d\mu_G(g),\ \ \ x \in X,
\end{equation}
where $\beta$ is a Bruhat function for $\rho$, which exists by Lemma \ref{lem:Bruhat_function}. The function $q$ satisfies the functional equation \ref{functional_eq_q}, since for $h \in G$ and $x \in X$,
\begin{align*}
\rho_hq(x) &= \int_G\rho_{hg}\beta(x)\frac{\rho_h\lambda_{g^{-1}}(x)}{\Delta_G(g)}\d\mu_G(g) = \int_G\rho_{g}\beta(x)\frac{\rho_h\lambda_{g^{-1}h}(x)}{\Delta_G(h^{-1}g)}\d\mu_G(g) \\
&\stackrel{(*)}{=} \int_G\rho_{g}\beta(x)\frac{\lambda_{g^{-1}hh^{-1}}(x)}{\Delta_G(h^{-1})\Delta_G(g)\lambda_{h^{-1}}(x)}\d\mu_G(g) = \frac{\Delta_G(h)}{\lambda_{h^{-1}}(x)}q(x).
\end{align*}
Now, we define a positive linear functional $I_{\rho\backslash X}$ on $C_c(\rho\backslash X)$ that will yield the desired measure $\mu_{\rho\backslash X}$ via the Riesz representation theorem. For $f \in C_c(\rho\backslash X)$, we define
$$I_{\rho\backslash X}(f) := \int_X\widetilde{f}\cdot q\d\mu_X,\ \ \ \mbox{with}\ \widetilde{f} \in C_c(X),\ \mbox{s.t.}\ A_\rho\widetilde{f} = f.$$
Note that such a function $\widetilde{f}$ exists by surjectivity of $A_\rho$.

We need to show that this functional is independent of the choice of $\widetilde{f}$. First, given a Bruhat function $\beta$, we can choose the preimage $\beta\cdot\widetilde{f}_\rho$ instead of $\widetilde{f}$, where $\widetilde{f}_\rho(x) := A_\rho\widetilde{f}(\pi_\rho(x)) = f(\pi_\rho(x))$. Using Fubini and the functional equation for $q$, we get
\begin{align*}
\int_X&(\beta\cdot\widetilde{f}_\rho)\cdot q\d\mu_X = \int_X\int_G\beta\cdot\rho_g\widetilde{f}\cdot q\d\mu_G(g)\d\mu_X = \int_X \widetilde{f}\cdot\int_G\rho_{g^{-1}}\beta\cdot\rho_{g^{-1}}q\cdot\lambda_g\d\mu_G(g) \d\mu_X \\
&\stackrel{\eqref{functional_eq_q}}{=} \int_X\widetilde{f}\cdot\int_G \rho_{g^{-1}}\beta\cdot\frac{\Delta_G(g^{-1})}{\lambda_g}q\cdot\lambda_g\d\mu_G(g) \d\mu_X = \int_X \widetilde{f}\cdot q\cdot\underbrace{\int_G\rho_g\beta\d\mu_G(g)}_{= \varphi_\rho = 1}\d\mu_X = I_{\rho\backslash X}(f).
\end{align*}

Now, for two functions $\widetilde{f}_1,\widetilde{f}_2 \in C_c(X)$ with $A_\rho\widetilde{f}_1 = A_\rho\widetilde{f}_2 = f$, we infer
\begin{align*}
\int_X&\widetilde{f}_1\cdot q\d\mu_X - \int_X\widetilde{f}_2\cdot q\d\mu_X  = \int_X(\widetilde{f}_1-\widetilde{f}_2)\cdot q\d\mu_X = \int_X(\widetilde{f}_1 - \widetilde{f}_2)\cdot q\d\mu_X \\
&= \int_X(\beta\cdot(\widetilde{f}_1)_\rho - \beta\cdot(\widetilde{f}_2)_\rho)\cdot q\d\mu_X = \int_X(\beta(x)\cdot A_\rho\widetilde{f}_1(\pi_\rho(x)) - A_\rho\widetilde{f}_2(\pi_\rho(x))\cdot q(x)\d\mu_X(x) \\
&= \int_X\beta(x)(f(\pi_\rho(x)) - f(\pi_\rho(x)))\cdot q(x)\d\mu_X(x) = 0,
\end{align*}
showing that $I_{\rho\backslash X}$ is well-defined.

By the Riesz representation theorem, there is a unique Radon measure $\mu_{\rho\backslash X}$ on $\rho\backslash X$, s.t. $I_{\rho\backslash X}(f) = \int_{\rho\backslash X}f\d\mu_{\rho\backslash X}$. In particular, for any $f \in C_c(X)$ we have
$$\int_X f\cdot q\d\mu_X = I_{\rho\backslash X}(A_\rho f) = \int_{\rho\backslash X}A_\rho f\d\mu_{\rho\backslash X},$$
i.e. the Weil formula \eqref{Weil_formula} holds.

When $\mu_X$ is relatively $\rho$-invariant with $\lambda_g\equiv \Delta_G(g^{-1})$, then $q\equiv 1$ satisfies the functional equation, yielding a measure s.t. \eqref{Weil_formula} holds.
\end{proof}

\vspace{1cm}
{\bf Acknowledgement.}
The author thanks Gero Friesecke, Rupert Lasser, and Arash Ghaani Farashahi for helpful discussions on various topics associated to this work, and the organizers of the HIM Trimester Program ``Mathematics of Signal Processing" for inviting me and for their hospitality.

\bibliographystyle{amsalpha}

\bibliography{zak}

\end{document}